\newtheorem{deff}{Definition}[section]
\newtheorem{lemma}[deff]{Lemma}
\newtheorem{theorem}[deff]{Theorem}%[subsection]
\newtheorem{corollary}[deff]{Corollary}%[subsection]
\newtheorem{conjecture}[deff]{Conjecture}
\newtheorem{proposition}[deff]{Proposition}
\newtheorem{fact}[deff]{Fact}
\newtheorem{em-example}[deff]{Example}
\newtheorem{em-def}[deff]{Definition}        % definition(auxiliary)
\newtheorem{em-remark}[deff]{Remark}         % remark(auxiliary)
\newtheorem{em-question}[deff]{Question}
\newtheorem{problem}[deff]{Problem}
\newenvironment{example}{\begin{em-example} \em }{ \end{em-example}}
\newenvironment{remark}{\begin{em-remark} \em }{\end{em-remark}}
\newenvironment{claim}{\underline{Claim}: \em}{\medskip}
\newcommand{\dom}{\text{dom}}
\newcommand{\acal}{{\mathcal A}}
\newcommand{\fcal}{\mathcal {F}}
\newcommand{\bDiamond}{\mathbin{\Diamond}}
\newcommand{\Lev}{\text{Lev}}
\newcommand{\ord}{\text{ord}}
\def\cl{\mathop{\it cl}}
\DeclareMathSymbol{\res}{\mathord}{AMSa}{"16}
\def\:{\nobreak \hskip .1111em\mathpunct {}\nonscript \mkern
   -\thinmuskip {:}\hskip .3333emplus.0555em\relax}
\def\N{{\mathbb N}}
\def\R{{\mathbb R}}
\def\S{{\mathbb S}}
\def\Q{{\mathbb Q}}
\def\cont{\mathfrak c}
\def\a{\mathfrak a}
\def\b{\mathfrak b}
\def\p{\mathfrak p}
\begin{document}
\title[On $\Delta$-spaces]{On $\Delta$-spaces}
\date{\today}

\author{Arkady Leiderman}
\address{Department of Mathematics, Ben-Gurion University of the Negev, Beer Sheva, Israel}
\email{arkady@math.bgu.ac.il}
\author{Paul Szeptycki}
\address{Department of Mathematics and Statistics, York University, Toronto, Canada}
\email{szeptyck@yorku.ca}
\keywords{$\Delta$-set, countably metacompact space, $\omega_1$-tree, $\Psi$-space, ladder system space}
\subjclass[2010]{54C35, 54G12, 54H05, 46A03} 

\begin{abstract}
$\Delta$-spaces have been defined by a natural generalization of a classical notion of $\Delta$-sets of reals to 
Tychonoff topological spaces; moreover, the class $\Delta$ of all $\Delta$-spaces consists precisely of those $X$ for which the locally convex space $C_p(X)$ is distinguished \cite{KL}.

The aim of this article is to better understand the boundaries of the class $\Delta$,
by presenting new examples and counter-examples.
 
1) We examine when trees considered as topological spaces equipped with the interval topology belong to $\Delta$.
In particular, we prove that no Souslin tree is a $\Delta$-space.
Other main results are connected with the study of
2) $\Psi$-spaces built on maximal almost disjoint families of countable sets; and 3) Ladder system spaces.

It is consistent with CH that all ladder system spaces on $\omega_1$ are in $\Delta$.
We show that in forcing extension of ZFC obtained by adding one Cohen real, there is a ladder system space on $\omega_1$ which is not in $\Delta$.

We resolve several open problems posed in \cite{FS}, \cite{KL}, \cite{Lei_Tkachenko}, \cite{Lei_Tkachuk}.
\end{abstract}
\maketitle
%%%%%%%%%%%%%%%%%%%%%%%%%%%%
\section{Introduction}\label{intro}
%%%%%%%%%%%%%%%%%%%%%%%%%%%
The reader is referred to consequent sections for an unexplained terminology.
Throughout the article, all topological spaces are assumed to be Tychonoff and infinite.
By $C_p(X)$ we mean the space $C(X)$ of real-valued continuous functions defined on $X$
equipped with the pointwise convergence topology. 
In this paper we investigate the class of topological spaces $X$
 such that the locally convex space $C_p(X)$ is distinguished. 
{\em Distinguished} locally convex (metrizable) spaces were introduced by J. Dieudonn\'{e} and L. Schwartz 
%as those locally convex spaces $E$ whose strong dual $E^{\ast}_{\beta}$ is barrelled.
and the reader may consult \cite{FKLS} (and references therein) for an introduction to and a brief history of the main advances 
in the area of general distinguished locally convex spaces.

%\begin{theorem}{\rm (\cite{FKLS}}\label{Theor:characterization}
%For a Tychonoff space $X$, the following conditions are equivalent{\rm:}
%\begin{enumerate}
%\item[{\rm (1)}] $C_p(X)$ is distinguished.
%\item[{\rm (2)}]  $C_{p}\left( X\right) $ is a large subspace of
%$\mathbb{R}^{X}$, i.e. for every mapping $f \in \R^X$ there is a bounded set $A \subset C_p(X)$ such that
%$f \in cl_{\R^X}(A)$.
%\item[{\rm (3)}] The strong dual $L_{\beta}(X)$ of the space $C_{p}(X)$ carries the finest locally convex topology.
%\end{enumerate}
%\end{theorem}

An intrinsic description of $X$ such that $C_p(X)$ is distinguished
 has been found recently in \cite{KL} (see also \cite{FS}).
% For the reader's convenience we recall some relevant terminology.
%\begin{enumerate}
%\item[\rm (a)] A collection of sets $\{U_{\gamma}: \gamma \in \Gamma\}$ is called an {\it expansion} of a collection of sets $\{X_{\gamma}: \gamma \in \Gamma\}$ in $X$
%if $X_{\gamma} \subseteq U_{\gamma} \subseteq X$ for every index $\gamma \in \Gamma$.
%\item[\rm (b)] A collection of sets $\{U_{\gamma} \subseteq X: \gamma \in \Gamma\}$ is called {\it point-finite} if for every $x \in X$
%there are at most finitely many indexes $\gamma \in \Gamma$ such that $x \in U_{\gamma}$. 
%\end{enumerate}

\begin{theorem}{\rm (\cite{KL})}\label{Theor:description}
For a Tychonoff space $X$, the following conditions are equivalent{\rm:}
\begin{enumerate}
\item[{\rm (1)}] $C_p(X)$ is distinguished.
\item[{\rm (2)}] Any countable disjoint collection of subsets of $X$ admits a point-finite open expansion in $X$.
\item[{\rm (3)}] $X$ is a $\Delta$-space.
\end{enumerate}
\end{theorem}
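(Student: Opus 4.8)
The plan is to separate the elementary set-theoretic equivalence $(2)\Leftrightarrow(3)$ from the functional-analytic equivalence $(1)\Leftrightarrow(2)$, and to route the latter through a concrete description of the strong dual of $C_p(X)$. The key elementary observation for $(2)\Leftrightarrow(3)$ is that a \emph{decreasing} sequence of open sets with empty intersection is automatically point-finite. For $(3)\Rightarrow(2)$, given a countable disjoint family $\{A_n:n\in\omega\}$, I put $D_n=\bigcup_{m\ge n}A_m$; disjointness forces $\bigcap_n D_n=\emptyset$, so the $\Delta$-property yields a decreasing open expansion $V_n\supseteq D_n$ with $\bigcap_n V_n=\emptyset$, and then $\{V_n\}$ is itself a point-finite open expansion of $\{A_n\}$ because each point leaves the decreasing chain $(V_n)$ after finitely many steps. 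For $(2)\Rightarrow(3)$, given a decreasing $(D_n)$ with empty intersection, I apply the expansion hypothesis to the disjoint sets $A_n=D_n\setminus D_{n+1}$, obtaining point-finite open $W_n\supseteq A_n$, and set $V_n=\bigcup_{m\ge n}W_m$: these are open, decreasing, contain $D_n$, and point-finiteness of $\{W_m\}$ makes $\bigcap_n V_n=\emptyset$.

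The substantive content is $(1)\Leftrightarrow(2)$. I would first record the duality: the topological dual of $C_p(X)$ is the space $L(X)$ of finitely supported measures $\sum_x c_x\delta_x$, a fundamental system of bounded sets of $C_p(X)$ is given by $B_a=\{f\in C(X):|f|\le a\}$ as $a$ ranges over lower semicontinuous functions $a:X\to[0,\infty)$, and the induced strong-dual seminorms are $p_a(\sum_x c_x\delta_x)=\sum_x|c_x|\,a(x)$ (the supremum being attained in the limit by interpolating continuous functions dominated by $a$, using the Tychonoff hypothesis and lower semicontinuity near each support point). Invoking the known reduction (see \cite{FS}) that $C_p(X)$ is distinguished precisely when its strong bidual $\mathbb{R}^X$ carries the product topology, condition $(1)$ becomes the assertion $(\star)$: \emph{every $\beta(L(X),C(X))$-bounded subset of $L(X)$ has finite support with bounded coefficients.}

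The bridge to $(2)$ is the observation that $\{A_n\}$ admits a point-finite open expansion if and only if there is a finite-valued lower semicontinuous $a:X\to[0,\infty)$ and reals $t_n\to\infty$ with $a(x)\ge t_n$ for all $x\in A_n$. Indeed, from an expansion $\{U_n\}$ one builds $a=\sup_N\sum_{n\le N}t_n\mathbf 1_{U_n}$, which is finite-valued by point-finiteness and lower semicontinuous as an increasing supremum of finite sums of indicators of open sets; conversely the superlevel sets $U_n=\{a>t_n-1\}$ form a point-finite open expansion. Granting this dictionary, $(2)\Rightarrow(\star)$ is immediate: if a bounded $B$ had infinite support I could pick $\mu_n\in B$ and distinct $x_n\in\operatorname{supp}\mu_n$ with $|c_{x_n}^{\mu_n}|=\gamma_n>0$, apply $(2)$ to the singletons $A_n=\{x_n\}$ with prescribed growth $t_n=n/\gamma_n$, and obtain $p_a(\mu_n)\ge\gamma_n a(x_n)\ge n$, contradicting boundedness of $B$.

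\textbf{The main obstacle is the converse $(\star)\Rightarrow(2)$.} Here one must start from a disjoint family $\{A_n\}$ with \emph{no} point-finite open expansion and manufacture a genuinely infinitely-supported, yet strongly bounded, subset of $L(X)$: finite measures $\mu_n$ whose supports spread through the $A_n$ but for which $\sup_n p_a(\mu_n)<\infty$ for \emph{every} lower semicontinuous $a$. The difficulty is that failure of a point-finite expansion is quantified over all open expansions at once, and the point witnessing non-point-finiteness need not lie in any $A_n$; extracting a single bounded configuration from this requires a careful diagonalization against the lower semicontinuous test functions, and this is where I expect the real work — and any use of the separation properties of $X$ afforded by the Tychonoff hypothesis — to be concentrated. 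Once $(\star)\Leftrightarrow(2)$ is established, combining it with $(2)\Leftrightarrow(3)$ closes the cycle of equivalences.
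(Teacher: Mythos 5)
Your elementary half is correct: the proof of (2)$\Leftrightarrow$(3) via the tails $D_n=\bigcup_{m\ge n}A_m$ in one direction and the differences $A_n=D_n\setminus D_{n+1}$ with $V_n=\bigcup_{m\ge n}W_m$ in the other is complete, and it is the same routine bookkeeping used in \cite{KL}. The duality setup is also essentially right: pointwise-bounded sets are exactly those captured by finite-valued lower semicontinuous envelopes $a=\sup_{g\in B}|g|$, your dictionary between point-finite open expansions of $\{A_n\}$ and finite-valued lsc functions with $a\ge t_n$ on $A_n$ is sound, and so is the implication (2)$\Rightarrow(\star)$ (modulo the trivial addendum that boundedness of coefficients follows from the constant weight $a\equiv 1$).

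The genuine gap is that you never prove $(\star)\Rightarrow(2)$: you explicitly defer it as ``the main obstacle,'' so the cycle of equivalences is not closed, and it is open precisely at the substantive implication $(1)\Rightarrow(2)$. Moreover the difficulty is largely self-inflicted by your choice of formulation. From the mere failure of a point-finite open expansion for $\{A_n\}$ one cannot readily manufacture a single $\beta$-bounded, infinitely supported family in $L(X)$: the natural candidate $\{t_n^{-1}\delta_x:x\in A_n,\,n\in\omega\}$ is $\beta$-bounded only under a uniform quantitative failure (for every lsc $a$ there is $C$ with $a\le Ct_n$ on $A_n$ for all $n$) that the hypothesis does not supply, which is exactly the diagonalization you flag but do not perform. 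The proof in \cite{KL} (note that the paper under review contains no proof of this theorem; it is quoted from \cite{KL}) avoids this by routing the hard direction through the other Ferrando--Saxon characterization (\cite{FS}, \cite{FKLS}): $C_p(X)$ is distinguished if and only if it is a large subspace of $\mathbb{R}^X$, i.e.\ every $f\in\mathbb{R}^X$ lies in the pointwise closure of a pointwise-bounded $B\subseteq C(X)$. With that form, $(1)\Rightarrow(2)$ follows in three lines from your own dictionary: given disjoint $\{A_n\}$, let $f$ take the value $n$ on $A_n$ and $0$ elsewhere, choose bounded $B$ with $f\in\overline{B}$, and set $a=\sup_{g\in B}|g|$; then $a$ is a finite-valued lsc function with $a\ge n-1$ on each $A_n$, and the superlevel sets $U_n=\{a>n-1\}$ form the required point-finite open expansion. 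To repair the proposal, either switch to that characterization or supply an actual proof of $(\star)\Rightarrow(2)$; note also that your identification of $(1)$ with $(\star)$ is itself only asserted via the bidual remark and would need its own justification rather than a bare citation.
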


\begin{deff}{\rm (\cite{KL}, \cite{Knight})}\label{def:Delta}
A topological space $X$ is said to be a $\Delta$-space if for every decreasing sequence $\{D_n: n \in \omega\}$
of subsets of $X$ with empty intersection, there is a decreasing sequence $\{V_n: n \in \omega\}$ consisting of open
subsets of $X$, also with empty intersection, and such that $D_n \subseteq V_n$ for every $n \in \omega$. 

The class of all $\Delta$-spaces in the paper is denoted by $\Delta$. Also, any uncountable subset of  the real line $\R$ that is a $\Delta$-space is called a $\Delta$-set. 
\end{deff}

The original definition of a {\em $\Delta$-set} is due to G. M. Reed and E. K. van Douwen (see \cite{Reed}) and the formulation of and 
the interest in $\Delta$-sets followed from the study of the dividing line between the class of normal spaces and the class of countably paracompact spaces.
Research on countably paracompact spaces has a long history in set-theoretic topology related to both the Normal Moore Space Conjecture 
and the study of preservation of normality in products.

C. H.\ Dowker proved that for a normal space $X$ the product $X \times [0, 1]$ is normal if and only if $X$
 is also countably paracompact (equivalently, countably metacompact). 
M. E.\ Rudin  \cite{Rudin} gave the first ZFC example of a Dowker space of size $(\aleph_{\omega})^{\aleph_0}$, but 
Z.\ Balogh \cite{Balogh_Dowker} constructed the first ZFC Dowker space with cardinality of the continuum $\cont$.
Using PCF theory, M. Kojman and S. Shelah \cite{KS} constructed another ZFC Dowker space of cardinality $\aleph_{\omega +1}$.

So, when a normal space is countably paracompact is an important question still attracting attention to this day, and M. E. Rudin's problem whether 
there is a normal not countably paracompact space of size $\omega_1$ in ZFC is still open (see \cite{HM}).
 Conversely, countably paracompactness also does not imply normality but distinguishing between these properties is often delicate.

An uncountable set $X \subset \R$ is called a {\em $Q$-set} if every subset of $X$ is relative $G_\delta$ (see e.g. \cite{FM}).
It is straightforward to show that every $Q$-set is a $\Delta$-set.  
The question whether there can be a $Q$-set goes back to F. Hausdorff and W. Sierpi\'nski in the 1930's (\cite{Hausdorff}, \cite{Sierpinski}) 
and renewed interest in $Q$-sets arose in connection to constructions of interesting counterexamples in the theory of normal spaces.
 F. B. Jones proved that well-known construction of the tangent disc topology over a set of reals $X$ is normal and non-metrizable if and only if $X$ is a $Q$-set.
 As well, the same construction is countably paracompact if and only if $X$ is a $\Delta$-set. 
Furthermore, T. C. Przymusi\'{n}ski
showed that the existence of a $\Delta$-set is actually equivalent to the existence of a separable countably paracompact non-metrizable Moore space \cite[Theorem 7]{P1}.

It is not difficult to show that the existence of a $Q$-set implies $2^{\aleph_0}=2^{\aleph_1}$ \cite{Rothberger},
while it is consistent that there are no $\Delta$-sets (e.g. assuming the Continuum Hypothesis (CH)).
On the other hand, under Martin's Axiom (MA) with the negation of $CH$,
 every uncountable subset of $\R$ of cardinality less than $\cont$ is a $Q$-set (see \cite{FM}).  

Whether it is consistent that there is a $\Delta$-set which is not a $Q$-set seems to be still not well understood. 
The proof appearing in \cite{Knight} is not clear and we are unaware of any expert in the field who understands it.
Let us state as an open and challenging problem to find an alternative proof.
The following question also seems to be open until now.

\begin{problem} {\rm (\cite{Reed2})} \label{problem_card}
Does the existence of a $\Delta$-set imply $2^{\aleph_0}=2^{\aleph_1}$?
 \end{problem}

 Most of the above facts about $Q$-sets and $\Delta$-sets can be found in \cite{Miller}, \cite{Reed}. 

%We should mention that topological spaces $X$ satisfying the above Definition \ref{def:Delta} have been referred to in the literature as $\Delta$-sets, 
%but in this paper, for general topological spaces satisfying Definition \ref{def:Delta} we reserve the term {\em $\Delta$-space}. 
%So, a $\Delta$-set is a $\Delta$-space that is a subset of the real line $\R$.
To distinguish between sets of reals and general topological spaces, we define a topological space $X$ to be a {\em $Q$-space} if every subset of $X$ is a $G_{\delta}$-set
 (equivalently, every subset of $X$ is a $F_{\sigma}$-set). $X$ is called a {\em $\sigma$-closed discrete} space if $X$ is a countable union of closed discrete subspaces.
 We mention the following simple facts about $Q$-spaces: 

\begin{itemize}
\item $X$ is a $Q$-space implies that $X$ is a $\Delta$-space (the proof is the same as for sets of reals, see e.g. \cite[Proposition 4.1]{KL}).
\item If $X$ is a $\sigma$-closed discrete then $X$ is a $Q$-space, therefore also a $\Delta$-space.
\item If $X$ is a $Q$-space then $X$ is a $\sigma$-discrete iff $X$ is $\sigma$-closed discrete.
\end{itemize}

Whether there is a $Q$-space that is not $\sigma$-discrete is quite non-trivial. 
This question was first asked by H. Junnila \cite{Junnila} and led Z. Balogh to define a topological space $X$ to be a {\em $Q$-set space} if $X$ is a $Q$-space and 
 $X$ is not $\sigma$-discrete \cite{Balogh1}. In that paper he gave a beautiful ZFC construction of a $Q$-set space and later improved this result to obtain 
 a hereditarily paracompact, perfectly normal $Q$-set space $X$ such that $|X|= \cont$ \cite{Balogh}. It is worth remarking that the techniques used in constructing
 $Q$-set spaces Z. Balogh later adapted to construct several  Dowker spaces in ZFC including the aforementioned example of size $\cont$.

A systematic study of the class $\Delta$ was originated in the paper \cite{KL} and continued in \cite{KL2}.
Among other results, it was proved that a $\Delta$-space $X$ must be scattered if $X$ is
locally compact \cite{KL} or countably compact \cite{KL2}.
%Recall that a topological space $X$ is {\it scattered} if every closed $A \subseteq X$ has an isolated (in $A$) point.
%In this paper we identify the ordered space of ordinals $[0, \xi)$ with $\xi$. 
It has been shown that the ordered space of ordinals $[0, \omega_1]$ provides
an example of a scattered compact space which is not in $\Delta$ \cite{KL}. 

However, a collection of compact $\Delta$-spaces is quite rich and includes both scattered Eberlein compact spaces and 
one-point compactifications of Isbell-Mr\'owka $\Psi$-spaces (for details see \cite{KL}).

The class $\Delta$ is preserved under closed continuous images and it is invariant under the operation of taking countable unions of closed subspaces \cite{KL2}.
In particular, any topological space which can be represented as a countable union of scattered Eberlein compact spaces is in $\Delta$.

%We say that a topological space $X$ is {\it $\sigma$-closed discrete} if $X = \bigcup_{n\in\omega}X_n$, where each $X_n$ is a closed and discrete subset of $X$.

%It is easy to see that every $\sigma$-closed discrete space is in $\Delta$ and every $\sigma$-discrete $Q$-space is $\sigma$-closed discrete.

It is worth mentioning also that the class $\Delta$ is invariant under taking arbitrary subspaces, and adding a finite set 
to a $\Delta$-space does not destroy the property of being a $\Delta$-space \cite{KL}.  

%Recall that topological space $X$ is {\it countably metacompact} if every countable open cover of $X$ has a point-finite open refinement, or, equivalently,
%if for every decreasing sequence $\{D_n: n \in \omega\}$
%of closed subsets of $X$ with empty intersection, there is a decreasing sequence $\{V_n: n \in \omega\}$ consisting of open
%subsets of $X$, also with empty intersection, and such that $D_n \subset V_n$ for every $n \in \omega$.  
%Immediately, every $\Delta$-space is hereditarily countably metacompact.

The aim of this article is to better understand the boundaries of the class $\Delta$,
by presenting new examples and counter-examples.

We focus on the three very well known set-theoretical constructions producing locally compact first-countable topological spaces:
 $\omega_1$-trees equipped with the interval topology (Section \ref{Section2});
$\Psi(D,\acal)$ spaces, where $\acal$ is an almost disjoint family of countable subsets of an infinite set $D$ (Section \ref{Section3});
and the ladder system spaces $X_L$ (Section \ref{Section4}).  

Summing up several known facts, we note that every almost Souslin tree is countably metacompact (Fact \ref{fact5}), and we observe that 
consistently there is an almost Souslin Aronszajn tree that is a $Q$-space (Fact \ref{fact2}).
The following statement proved in Section \ref{Section2} is one of the main results of our paper: 
 no Souslin tree is a $\Delta$-space (Theorem \ref{Th:Souslin}).

In Section \ref{Section3} we give the negative answer to \cite[Problem 5.11]{KL}:
there is a locally compact $\Psi$-space $X$ such that its one-point compactification is a scattered compact space with a finite Cantor-Bendixson rank,
 but $X \notin \Delta$ (Corollary \ref{cor5.9}).
Also, we demonstrate that there exists an Isbell-Mr\'owka $\Psi$-space $X$ such that one-point extension $X_p = X \cup \{p\}$ of $X$ has uncountable 
 tightness at the point $p$, for some $p \in \beta(X) \setminus X$ (Theorem \ref{example:Dow}). 
This result resolves \cite[Problem 2.15]{Lei_Tkachenko} and \cite[Question 4.1]{Lei_Tkachuk}.

It is consistent with CH that all ladder system spaces on $\omega_1$ are in $\Delta$.
The main result presented in Section \ref{Section4} says that
in forcing extension obtained by adding one Cohen real, there is a ladder system $L$ on $\omega_1$ whose corresponding space $X_L$ 
is not countably metacompact and hence $X_L \notin \Delta$ (Theorem \ref{Theor:Paul_new}).
Also, assuming $MA(\omega_1)$, we prove that for any pair of ladder systems, $L_1$ and $L_2$, the product $X_{L_1}\times X_{L_2}$ is hereditarily normal
(Theorem \ref{Th:prod}).

In the last Section \ref{Section1} we notably generalize several results from \cite{FKLS} and also we 
provide an example of a Lindel\"of $P$-space $X \notin \Delta$ answering negatively \cite[Problem~24]{FS}.
%Our notation and topological terminology are  standard  and  follow  the  book \cite{Engelking}. 
%The most of notions in use are defined explicitly in the text.
%%%%%%%%%%%%%%%%%%%%%%%%%%%%%%%%%%%
\section{Auxiliary notions}\label{Notions}
%%%%%%%%%%%%%%%%%%%%%%%%%%%%%%%%%%
Our set-theoretic notation is standard and follows the text \cite{Kunen}.
Topological terminology follows the book \cite{Engelking}.
For the reader's convenience we recall some relevant notions.
\begin{itemize}
    \item[\ding{192}] A collection of sets $\{U_{\gamma}: \gamma \in \Gamma\}$ is called an {\it expansion} of a collection of sets $\{X_{\gamma}: \gamma \in \Gamma\}$ in $X$ if $X_{\gamma} \subseteq U_{\gamma} \subseteq X$ for every index $\gamma \in \Gamma$.
		A collection of sets $\{U_{\gamma} \subseteq X: \gamma \in \Gamma\}$ is called {\it point-finite} if for every $x \in X$
there are at most finitely many indexes $\gamma \in \Gamma$ such that $x \in U_{\gamma}$.
    \item[\ding{193}] A topological space $X$ is called {\it countably metacompact} if every countable open cover of $X$ has a point-finite open refinement, or, equivalently, if for every decreasing sequence $\{D_n: n \in \omega\}$ of closed subsets of $X$ with empty intersection, there is a decreasing expansion 
		$\{V_n: n \in \omega\}$ consisting of open subsets of $X$, also with empty intersection. 
			
Immediately, every $\Delta$-space is hereditarily countably metacompact.
		\item[\ding{194}] A topological space $X$ is called {\it scattered} if every closed $A \subseteq X$ has an isolated (in $A$) point.
    \item[\ding{195}] Let $p$ be a point of a topological space $X$. We say that $X$ has countable {\it tightness} at $p$, and we write $t(p,X)=\aleph_0$ if 
		whenever $p$ is in the closure $\cl{Y}$ of a set $Y \subseteq X$, then there is a countable $B \subseteq Y$
		such that $p$ is in the closure of $B$.
		Tightness of $X$, $t(X) = \aleph_0$ if $t(p,X) = \aleph_0$ for each point $p\in X$. 
		\item[\ding{196}] A topological space $X$ is called {\it Fr\'echet-Urysohn} if for every subset $Y\subseteq X$ and each $x\in\cl{Y}$ 
		there exists a sequence $\{x_{n}: n\in \omega\}$ in $Y$ which converges to $x$.
    \item[\ding{197}] A Tychonoff space $X$ is called {\it pseudocompact} if every continuous function $f: X \to \R$ is bounded.
		\item[\ding{198}] A topological space $X$ is called a {\it $P$-space} if countable intersections of open sets in $X$ are open.
		\item[\ding{199}] A compact space which can be embedded into a Banach space equipped with the weak topology is called an {\it Eberlein} compact space.
    \end{itemize}
		
In the paper we identify the ordered space of ordinals $[0, \xi)$ with $\xi$.		

%%%%%%%%%%%%%%%%%%%%%%%%%%%%%%%%
\section{Trees}\label{Section2}
%%%%%%%%%%%%%%%%%%%%%%%%%%%%%%%%%
A {\em tree} is a well-founded partially ordered set such the set of predecessors of any element is well-ordered. 
In the article, $\Lev_\alpha(T)$ denotes the $\alpha^{\text{th}}$ level of a tree $T$ and $T_\alpha=\bigcup_{\beta<\alpha} \Lev_\beta(T)$.
An {\em Aronszajn tree} is an $\omega_1$-tree with no uncountable branches. A {\em Souslin tree} is an $\omega_1$-tree such that
 every branch and every antichain is at most countable. 
An $\omega_1$-tree $T$ is said to be {\em almost Souslin} if whenever $A$ is an antichain of $T$ then 
$\{\alpha \in \omega_1: A \cap \Lev_\alpha(T) \neq \emptyset \}$ is not stationary.
A tree is called {\em special} if it can be represented as a countable union of antichains.

 While there are a number of interesting topologies that can be considered on a tree, we consider trees as topological spaces equipped with the interval topology.
The {\em interval topology} on a tree $T$ is generated by the open base consisting of all intervals of the form
$(s,t]=\{x\in T: s <x \leq t\}$, together with all singletons $\{t\}$, where $t$ is a minimal element of $T$. 

Since $\omega_1 \notin \Delta$, if a tree is a $\Delta$-space then it does not have an uncountable branch.
 The following observations are based on the known results from the literature.

\begin{fact}\label{fact1}
Every special tree is $\sigma$-closed discrete \cite{Nyikos}. It follows that every special tree is a $Q$-space, and so every special tree is a $\Delta$-space.  
\end{fact}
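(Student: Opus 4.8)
The plan is to isolate the only non-formal ingredient. The two implications in the second sentence---$\sigma$-closed discrete $\Rightarrow$ $Q$-space $\Rightarrow$ $\Delta$-space---are precisely the bulleted facts recalled in the introduction, so once the first sentence is established there is nothing left to do. Thus everything reduces to proving that a special tree $T$, carrying the interval topology, is a countable union of closed discrete subspaces. Writing $T = \bigcup_{n \in \omega} A_n$ with each $A_n$ an antichain (the definition of special), it suffices to show that in the interval topology \emph{every antichain $A \subseteq T$ is simultaneously closed and discrete}; the countable union then witnesses that $T$ is $\sigma$-closed discrete.

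The single observation driving both points is that a basic neighborhood $(s,t] = \{x : s < x \le t\}$ of a non-minimal point $t$ consists entirely of elements $x \le t$, that is, of $t$ together with some of its predecessors, which form a well-ordered chain; and an antichain meets any chain in at most one point. First I would check discreteness: given $t \in A$, if $t$ is minimal then $\{t\}$ is open, and otherwise any $(s,t]$ with $s < t$ already satisfies $(s,t] \cap A = \{t\}$, since any $a \in A$ with $s < a \le t$ is comparable to $t$ and hence equal to it. So $A$ is discrete in itself.

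Next I would check closedness, the part needing slightly more care. Let $t \notin A$. If $t$ is minimal, $\{t\}$ is an open neighborhood missing $A$. If $t$ is not minimal, then $A$ meets the chain $\{x : x \le t\}$ in at most one point $a$, and $a \ne t$ since $t \notin A$, so $a < t$ whenever such an $a$ exists. Taking $s = a$ in that case, and any $s < t$ otherwise, every element of $(s,t]$ satisfies $x \le t$, while the only candidate element of $A$ with $x \le t$ is $a$, which does not belong to $(s,t]$. Hence $(s,t] \cap A = \emptyset$, and $A$ is closed.

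With each $A_n$ shown to be closed and discrete, $T = \bigcup_{n \in \omega} A_n$ is $\sigma$-closed discrete, and the stated consequences follow from the recalled facts (and from \cite{Nyikos} for the attribution). I expect the closedness step to be the main obstacle, precisely because it requires the endpoint $s$ to be positioned just above the at most one element of $A$ lying under $t$; discreteness and the final bookkeeping are routine.
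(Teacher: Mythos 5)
Your proof is correct, and since the paper offers no argument of its own for this Fact---the first sentence is attributed to \cite{Nyikos} and the two remaining implications are exactly the bulleted facts recalled in the introduction---your write-up simply supplies the standard argument being cited. Your key observation is the right one: every basic neighborhood $(s,t]$ of a non-minimal $t$ is contained in the well-ordered chain $\{x : x \le t\}$, which meets an antichain in at most one point, and your handling of the closedness case (taking $s$ to be the unique element of $A$ below $t$ when it exists) is precisely where the care is needed and is carried out correctly.
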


\begin{fact}\label{fact2} 
It is consistent with ZFC that there is a non-special Aronszajn tree $T$ that is a $Q$-space, hence a $\Delta$-space. 
\end{fact}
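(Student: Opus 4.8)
The plan is to deduce Fact~\ref{fact2} from a single consistency statement: that there is, consistently, an \emph{almost Souslin} Aronszajn tree which is a $Q$-space. The first --- and completely routine --- step is to check that almost Souslinness already forces \emph{non-special}. Suppose toward a contradiction that such a $T$ were special, say $T=\bigcup_{n\in\omega}A_n$ with each $A_n$ an antichain. For each $n$, almost Souslinness gives that $S_n=\{\alpha\in\omega_1:A_n\cap\Lev_\alpha(T)\neq\emptyset\}$ is nonstationary, while $\bigcup_n S_n=\{\alpha:\Lev_\alpha(T)\neq\emptyset\}=\omega_1$ since $T$ is an $\omega_1$-tree. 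As $\mathrm{NS}_{\omega_1}$ is countably complete, $\omega_1$ would be nonstationary, which is absurd. Hence almost Souslin implies non-special, and it suffices to produce a tree with the two properties ``almost Souslin'' and ``$Q$-space''. Note that a Souslin tree is vacuously almost Souslin but cannot be used, since by Theorem~\ref{Th:Souslin} it is not even a $\Delta$-space; so the target must be a genuinely non-Souslin almost Souslin tree.

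Next I would record the topological meaning of the $Q$-space property in the interval topology, reducing everything to a combinatorial statement about closed sets. The minimal nodes and the successor nodes are isolated (for a successor node $t$ with immediate predecessor $s$ one has $(s,t]=\{t\}$), so the only non-isolated points are the limit-level nodes, whose neighbourhoods are the branch tails $(s,t]$. Consequently a set $F$ is closed precisely when it contains every limit node that is the supremum, along a single branch, of an increasing sequence drawn from $F$; in particular any antichain is closed and discrete, which is the mechanism behind Fact~\ref{fact1}. Thus $T$ is a $Q$-space iff every subset $A$ can be written as a countable union of such \emph{branchwise-closed} sets, the only real obstruction being the way $A$ accumulates at limit nodes. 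The crucial observation is that the covering pieces need \emph{not} be antichains --- they may carry convergent branches --- so there is strictly more room here than in the $\sigma$-closed-discrete (that is, special) situation of Fact~\ref{fact1}.

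The construction itself I would attempt in a suitable model, most naturally under $\diamondsuit$ (which supplies CH and lets one anticipate every subset of the tree; this is consistent with there being a tree $Q$-space, since the cardinal obstruction to a $Q$-\emph{set} of reals does not apply to $Q$-spaces, as Balogh's ZFC examples show). Building $T$ on $\omega_1$ by recursion on levels, at each limit level one chooses which cofinal branches to continue so as to: (a) keep $T$ Aronszajn; (b) seal off every potential antichain onto a nonstationary set of levels, securing almost Souslinness; and (c) use the $\diamondsuit$-sequence to guess each subset $A$ of the portion of the tree already built and commit the nodes placed into $A$ to a layered, branchwise-closed assignment that will persist as a genuine closed cover in the final tree.

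The main obstacle is reconciling (b) with (c). Non-specialness forbids covering $T$ by countably many antichains, whereas the $Q$-space demand asks for countable closed covers of \emph{every} subset, and the two pressures point in opposite directions; the only slack comes from allowing the covering pieces to be non-antichain closed sets. Turning that slack into workable book-keeping --- one fixed countable family of branchwise-closed sets per guessed $A$ that remains simultaneously closed along \emph{all} branches passing through the limit stages, without inadvertently rendering the whole tree special --- is the delicate point, and is exactly where the set-theoretic hypothesis is needed: under $\mathrm{MA}(\omega_1)$ every Aronszajn tree is special (hence trivially a $Q$-space, but never almost Souslin), so no ZFC construction can succeed and the anticipatory strength of a $\diamondsuit$-style (or forcing) argument is essential.
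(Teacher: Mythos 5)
Your reduction of Fact~\ref{fact2} to the consistency of an almost Souslin Aronszajn tree that is a $Q$-space matches the paper's strategy, and your opening step is correct and even slightly more careful than the paper, which asserts without proof that an almost Souslin tree cannot be special (your argument via countable completeness of the nonstationary ideal, using that every level of an $\omega_1$-tree is nonempty, is exactly right). Your description of the interval topology and of closed sets is also accurate. But the proof has a genuine gap at its core: the consistency of such a tree is the \emph{entire content} of the Fact, and you never establish it. Your $\diamondsuit$-recursion is a list of desiderata (a)--(c) together with an explicit admission that reconciling (b) with (c) is ``the delicate point''; nothing is done to show the book-keeping can be carried out, so what is offered is a research plan, not a proof. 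Worse, the choice of $\diamondsuit$ as ambient hypothesis is unsubstantiated and runs opposite to the regime in which the statement is actually known to hold: the paper works under $\mathfrak{p}>\omega_1$ (incompatible with $\diamondsuit$, since $\mathfrak{p}>\omega_1$ refutes CH). Your only evidence that CH-style models can accommodate the tree --- Balogh's ZFC $Q$-set spaces --- concerns spaces that are not trees and says nothing about Aronszajn trees; under CH the only Aronszajn trees known to be $Q$-spaces are the special ones (Fact~\ref{fact1}), which are precisely the ones you must avoid.

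The paper's actual argument involves no recursive construction at all. It takes the tree $w\Q$ of well-ordered subsets of $\Q$ ordered by end-extension and observes that its interval topology refines the separable metric topology inherited from $2^{\Q}$ (for $t$ in a basic open set $U$ of $2^{\Q}$ determined by a finite partial function, a suitable initial segment $s$ of $t$ gives $(s,t]\subseteq U$). By the Rothberger--Silver theorem, $\mathfrak{p}>\omega_1$ implies every separable metric space of size $\omega_1$ is a $Q$-space; since $G_\delta$-ness transfers upward to a finer topology, every subtree of $w\Q$ of size $\omega_1$ is a $Q$-space in the interval topology. Finally, Theorems 15 and 16 of Todor\v cevi\'c's paper cited as \cite{Stevo2} supply an almost Souslin Aronszajn subtree of $w\Q$ whose existence is consistent with $\mathfrak{p}>\omega_1$, and almost Souslin gives non-special as you showed. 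Note how this sidesteps exactly the tension you identified: the $Q$-space property is not built node-by-node against the almost Souslin requirement but comes for free from the weaker metric topology --- a mechanism unavailable under CH, where $\mathfrak{p}=\omega_1$. To repair your proposal you would either need to import this citation-based argument, or genuinely prove that $\diamondsuit$ (or some forcing) yields a non-special Aronszajn $Q$-space tree, which is not known and is precisely the kind of question the paper leaves open in Problem~\ref{problem4}.
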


Almost Souslin tree clearly cannot be special.
We claim that it is consistent to have an almost Souslin Aronszajn tree $T$ such that every subset of $T$ is $F_{\sigma}$ (this claim appears in \cite[page 273]{Stevo}).

Let us give a bit more details about this Aronszajn tree. We thank S. Todor\v cevi\'c for pointing out that such an Aronszajn tree can be obtained as follows. 
$\mathbb Q$ denotes the set of rationals endowed with the usual linear order.
Let $w{\mathbb Q}$ be the tree of well-ordered subsets of ${\mathbb Q}$ ordered by end-extension. Clearly, $w{\mathbb Q}$ is a tree of height $\omega_1$.
 It is straightforward to check that the interval topology on $w{\mathbb Q}$ is finer than the natural separable metric topology which 
$w{\mathbb Q}$ inherits from the product topology on $2^{\mathbb Q}$.
 Indeed, let $U$ be a basic open subset of $2^{\mathbb Q}$ determined by the finite partial function $r$, and let $t\in w{\mathbb Q}$ be an element of $U$.
Denote by $q$ the smallest element of $t$ such that $(-\infty,q)\cap \dom(r)=(-\infty, \sup(t))\cap \dom(r)$.
 Then for $s=t\cap (-\infty,q)$, we have $(s,t]\subseteq U$. This means that the interval topology on $w{\mathbb Q}$ has a weaker separable metric topology. 

Recall that $\p$ is the minimal cardinality of a centered family $\fcal$ of infinite subsets of $\omega$ for which one cannot find
an infinite set $A\subseteq \omega$ such that $A\setminus B$ is finite for all $B\in \fcal$ (see \cite{vanDouwen}).

By the Rothberger-Silver theorem, any separable metric space of size $\omega_1$ is a $Q$-space under the assumption $\p > \omega_1$ (see \cite[Corollary 23B]{Fremlin}).
Hence, assuming ${\mathfrak p} > \omega_1$, any subspace of $w{\mathbb Q}$ of size $\omega_1$ is a $Q$-space because it admits a weaker separable metric topology.
 Theorems 15 and 16 of \cite{Stevo2} describe the required almost Souslin Aronszajn subtree $T$ of $w{\mathbb Q}$ which is consistent with $\p > \omega_1$. 

\begin{fact}\label{fact3} 
W. Fleissner, assuming $\bDiamond^{+}$, proved that there exists an Aronszajn tree which is not countably metacompact \cite{Fleissner}.
In \cite{Hanazawa} it was shown that such an example can be constructed from a weaker assumption $\bDiamond$. 
Therefore, it is consistent with ZFC that there exists an Aronszajn tree which is not a $\Delta$-space.
\end{fact}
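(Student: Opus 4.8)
The plan is to read the first two sentences as citations of the constructions of Fleissner and Hanazawa, and to supply a short deduction yielding the concluding consistency statement. Since $\bDiamond$ holds in the constructible universe $L$, it is consistent with ZFC, so it suffices to reason inside a fixed model of $\bDiamond$.

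First I would invoke Hanazawa's strengthening of Fleissner's theorem: under $\bDiamond$ (rather than Fleissner's $\bDiamond^{+}$) there is an Aronszajn tree $T$, equipped with the interval topology, that is not countably metacompact. Unwinding the definition in \ding{193}, this failure is witnessed by a decreasing sequence $\{D_n : n \in \om\}$ of closed subsets of $T$ with $\bigcap_n D_n = \emp$ that admits no decreasing open expansion with empty intersection.

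The decisive step is then purely logical. As recorded immediately after \ding{193}, every $\Delta$-space is hereditarily countably metacompact, hence in particular countably metacompact; contrapositively, a space failing countable metacompactness cannot lie in $\Delta$. Applied to $T$ this gives $T \notin \Delta$, and since the argument takes place in an arbitrary model of $\bDiamond$ (for instance $L$), the existence of such a $T$ is consistent with ZFC, which is the assertion. I expect no genuine mathematical obstacle here --- the substance is entirely contained in \cite{Fleissner} and \cite{Hanazawa} --- the only point requiring care is to confirm that the notion of countable metacompactness used in those sources agrees with the interval-topology formulation of \ding{193}, so that the implication ``$\Delta \Rightarrow$ countably metacompact'' may be applied to $T$ without modification.
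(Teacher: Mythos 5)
Your proposal is correct and follows exactly the route the paper intends: cite Hanazawa's $\bDiamond$-construction of a non--countably-metacompact Aronszajn tree, apply the observation recorded after \ding{193} that every $\Delta$-space is countably metacompact (contrapositively $T \notin \Delta$), and obtain consistency since $\bDiamond$ holds in $L$. The paper offers no further argument for this Fact, so there is nothing to add beyond your careful (and appropriate) remark that the topology in the cited sources is the interval topology used here.
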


\begin{fact}\label{fact4}
If a tree $T$ is a $Q$-space, then $T$ is $\R$-embeddable (follows from \cite[Theorem 2.1]{Hart}),
 and since for any $\R$-embeddable $\omega_1$-tree $T$ every uncountable subset of $T$ must contain an uncountable antichain \cite{Devlin},
 no Souslin tree can be a $Q$-space.
\end{fact}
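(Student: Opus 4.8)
The plan is to read the statement as a two-link implication chain and prove each link separately, then combine them for Souslin trees. The two links are: (i) if the tree space $T$ is a $Q$-space, then $T$ is $\R$-embeddable, i.e.\ there is a map $f\colon T\to\R$ that is strictly increasing along the tree order; and (ii) for any $\R$-embeddable $\omega_1$-tree, every uncountable subset contains an uncountable antichain. Granting both, the conclusion is immediate: a Souslin tree $T$ is an uncountable $\omega_1$-tree all of whose antichains are countable, so $T$ is an uncountable subset of itself containing no uncountable antichain. Were $T$ a $Q$-space, it would be $\R$-embeddable by (i), and then (ii) applied to the subset $T$ would manufacture an uncountable antichain, a contradiction. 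Hence no Souslin tree is a $Q$-space.

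For link (ii) I would argue directly from an embedding $f\colon T\to\R$. The first observation is that $f$ is strictly monotone along chains, so any two comparable nodes receive distinct values; equivalently, each fibre $f^{-1}(r)$ is an antichain. Now fix an uncountable $S\subseteq T$ and for each $s\in S$ set $b_s=\sup\{f(x): x<s\}$ (with $b_s=-\infty$ if $s$ is minimal), noting $b_s\le f(s)$. If uncountably many $s$ satisfy the strict inequality $b_s<f(s)$, choose a rational $q_s\in(b_s,f(s))$ for each such $s$; some single rational $q$ is then chosen for an uncountable set $S_q$, and $S_q$ is an antichain, because $s<s'$ in $S_q$ would force $q=q_{s'}>b_{s'}\ge f(s)>q$. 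Since successor and minimal nodes always have a gap $b_s<f(s)$, the complementary case is that all but countably many $s$ are limit nodes with $b_s=f(s)$; on that uncountable remainder any uncountable fibre of $f$ is already an uncountable antichain, while the residual case, where $f$ is countable-to-one on a set of limit-type nodes, is the delicate one and is exactly what the cited result \cite{Devlin} supplies. I expect the clean gap argument to cover the bulk, with the limit-type analysis being the technical heart.

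The main obstacle, and the step for which I would lean on the literature, is link (i): extracting an order-preserving real-valued map from the purely topological hypothesis that every subset of $T$ is $F_\sigma$ in the interval topology. Here I would invoke \cite[Theorem~2.1]{Hart}, which characterizes $\R$-embeddability of a tree in terms of the $F_\sigma$/$G_\delta$ behaviour of its subsets; since a $Q$-space has by definition every subset $G_\delta$ (equivalently $F_\sigma$), the hypothesis of that characterization is met. The mechanism I would expect to reconstruct, if a self-contained proof were wanted, is to use the $F_\sigma$ decompositions of suitably chosen subsets to produce a coherent assignment of rationals witnessing the gaps $b_s<f(s)$ from link (ii), and then to organize these into a single monotone $f$ while respecting limit nodes; this last coherence requirement is precisely where the full $Q$-space strength is consumed, as opposed to mere countable metacompactness. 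Assembling (i) and (ii) then yields the Fact.
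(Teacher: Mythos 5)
Your proposal is correct and follows essentially the same route as the paper: Fact~\ref{fact4} is proved there exactly by citing \cite[Theorem 2.1]{Hart} for the implication ($Q$-space $\Rightarrow$ $\R$-embeddable), citing \cite{Devlin} for the uncountable-antichain property of $\R$-embeddable $\omega_1$-trees, and combining the two with the definition of a Souslin tree, just as you do. Your extra rational-gap argument correctly handles the successor/gap case of Devlin's lemma, and you rightly defer the genuinely delicate limit-node case to the cited reference, so nothing is missing.
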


\begin{fact}\label{fact5}
Every almost Souslin tree is (hereditarily) countably metacompact (this result is due to P. Nyikos, see \cite{Fleissner}).
\end{fact}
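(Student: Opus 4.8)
The plan is to use the characterization of countable metacompactness in terms of closed sets recalled in \secref{Notions}: it suffices to show that for every decreasing sequence $\{D_n : n\in\omega\}$ of closed subsets of $T$ with $\bigcap_n D_n=\emptyset$ there is a decreasing open expansion $\{V_n\}$ with $\bigcap_n V_n=\emptyset$. I would first repackage this through the integer-valued function $f(t)=\min\{n:t\notin D_n\}$, so that $D_n=\{t:f(t)>n\}$; here $f$ is finite-valued (as $\bigcap_n D_n=\emptyset$) and upper semicontinuous (as each $D_n$ is closed). The task becomes the production of a lower semicontinuous $g:T\to\omega$ with $g\ge f$, whereupon $V_n=\{t:g(t)>n\}$ works. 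Since in the interval topology every successor node and every minimal node is isolated, the only semicontinuity constraints occur at nodes of limit level, where they refer to the behaviour of $f$ and $g$ along a tail of the branch below the node. I would prove the statement for an arbitrary subspace at once, which yields the hereditary conclusion; the only point to note is that an antichain of a subspace $Y\subseteq T$ is an antichain of $T$, so the almost Souslin hypothesis is available for $Y$ as well.

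Next I would extract the combinatorial content of \emph{almost Souslin}. For each $n$ let $A_n$ be the set of minimal elements (in the tree order) of $D_n$; being pairwise incomparable, $A_n$ is an antichain, so by hypothesis $\{\alpha:A_n\cap\Lev_\alpha(T)\ne\emptyset\}$ is non-stationary. Intersecting the countably many complementary club sets and intersecting with the limit ordinals, I obtain a single club $E\subseteq\omega_1$ with the key \emph{record-free} property: no node whose level lies in $E$ is minimal in any $D_n$. Equivalently, for every $t$ with $\ell(t)\in E$ one has $f(t)\le\sup_{s<t}f(s)$, and for each $n$, $t\in D_n$ forces some $s<t$ to lie in $D_n$.

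Now I would use $E$ to cut $T$ into blocks. Writing $E=\{\gamma_\xi:\xi<\omega_1\}$ increasingly, let $T^{[\xi]}$ consist of the nodes whose level lies in $[\gamma_\xi,\gamma_{\xi+1})$, together with an initial block below $\gamma_0$. Each $T^{[\xi]}$ has only countably many levels, each level is countable, and the subspace is first countable and regular; a countable first countable regular space is second countable, hence metrizable. Consequently every block is countably metacompact, so the traces $\{D_n\cap T^{[\xi]}\}$ (decreasing, closed in the block, with empty intersection) admit a decreasing open expansion inside the block with empty intersection. It remains to glue these block-wise expansions into a single decreasing open expansion of $\{D_n\}$ in $T$ with empty intersection.

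The gluing is the heart of the matter and is precisely where the record-free property is used. A union of the block-wise open sets need not be open in $T$, because a node $t$ at a level $\gamma_\xi\in E$ is a limit node whose neighbourhoods descend into the previous block; to keep $V_n$ open at such $t\in D_n$ one must adjoin a tail of the branch below $t$. The record-free property makes this coherent: since $t\in D_n$ forces $D_n$ to meet the branch below $t$, the demand created at a boundary node is never a brand-new one, and an induction up the club levels lets each demand be met by a tail lying in the expansion already built in the lower blocks. The crux, and the main obstacle, is to verify that these downward-transmitted demands, accumulated over all $\omega_1$ club levels, never force some $g(t)$ to be infinite and never destroy $\bigcap_n V_n=\emptyset$; this is exactly what the non-stationarity of every $A_n$ (equivalently, the confinement of all records to a non-stationary set of levels) secures, in the same way that on the single chain $\omega_1$ a decreasing sequence of closed sets with empty intersection must be eventually non-cofinal because a countable intersection of clubs meets every stationary set. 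The identical argument applied to any subspace $Y$, with $D_n\subseteq Y$ closed in $Y$ and $A_n$ the $Y$-minimal elements, gives hereditary countable metacompactness.
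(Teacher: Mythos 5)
You should know at the outset that the paper does not prove Fact~\ref{fact5} at all --- it is quoted from \cite{Fleissner} and attributed to Nyikos --- so your outline has to stand on its own, and it does not: it contains a genuine gap at exactly the step you yourself label ``the crux, and the main obstacle.'' The scaffolding is fine (the u.s.c./l.s.c.\ reformulation, the observation that the minimal elements of $D_n$ form an antichain, the club $E$ of limit levels avoiding all these antichains, and the decomposition into countable, first countable, regular --- hence metrizable and countably metacompact --- blocks, whose bottom-level points are isolated in their block). But the mechanism you propose for the gluing is wrong as stated: a demand at a boundary node $t\in D_n$ of level $\gamma_\xi\in E$ cannot be ``met by a tail lying in the expansion already built in the lower blocks,'' because a tail $(s,t]$ consists mostly of points \emph{outside} $D_n$, and the lower-block expansions are expansions of the traces $D_n\cap T^{[\eta]}$ with empty intersection, so for large $n$ they must eventually exclude any fixed point; they have no reason to contain the tail. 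The tails therefore have to be \emph{adjoined} to $V_n$, and the entire content of the theorem is then the verification that adjoining them (note that when $\gamma_\xi$ is a limit point of $E$, every tail crosses infinitely many blocks) preserves $\bigcap_n V_n=\emptyset$. You explicitly leave this verification out, offering only an analogy with the chain $\omega_1$; and your weak form of the record-free property (``$t\in D_n$ forces \emph{some} $s<t$ to lie in $D_n$'') does not by itself even yield cofinality of $D_n$ below $t$, let alone control the accumulated tails.

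The gap is closable, and in fact your record-free property is exactly strong enough --- but only with a specific choice you never make: for each boundary node $t$, adjoin the basic open tail $(s_t,t]$ with foot $s_t$ chosen \emph{in} $D_{f(t)-1}\cap\hat{t}$ (your property supplies such a foot since levels in $E$ carry no minimal elements of any $D_n$; this single foot serves all $n<f(t)$, so each node spawns only finitely many tails). Then if some point $x$ lay in adjoined tails for infinitely many $n$, the corresponding feet lie on the well-ordered chain below $x$ and belong to $D_n$ with $n\to\infty$; either one foot recurs, placing it in every $D_n$, or infinitely many distinct feet admit a tree-increasing subsequence whose supremum belongs to every $D_n$ by closedness --- both contradicting $\bigcap_n D_n=\emptyset$. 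This closedness-of-$D_n$ argument, or some substitute such as the stronger lemma that at club levels $D_n$ is actually cofinal below every point of $D_n$ (which itself needs a separate pressing-down argument: the $D_n$-points isolated from below do \emph{not} form an antichain, but fixing the level of the witness gap produces antichains, and Fodor plus almost-Souslinness shows the relevant levels are non-stationary), is the missing idea; without it the proposal is a plan, not a proof. A final caution on ``hereditarily'': for a subspace $Y$ the argument must be run in the subspace topology of $Y$, where limit nodes of $T$ may become isolated and closures are computed in $Y$; your antichain remark makes the almost-Souslin hypothesis available, but the block and tail analysis has to be redone in $Y$ (it survives, precisely because the feet above are taken in $D_n\subseteq Y$), and this too is asserted rather than checked.
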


Below we present one of the main results of our paper. 

\begin{theorem}\label{Th:Souslin}  No Souslin tree is in the class $\Delta$. 
\end{theorem}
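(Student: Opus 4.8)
The plan is to argue from the open-expansion form of Definition~\ref{def:Delta}, and to exploit the one feature that separates the interval topology on a tree from the order topology on $\omega_1$: a limit node $z$ is captured into an open set $V$ not only as a limit from below along its own branch, but as an \emph{interior} point of an interval $(\rho,t]\subseteq V$ with $\rho<z<t$ --- that is, a node lying \emph{above} $z$ can drag $z$ into $V$. This is precisely the phenomenon that makes the Fodor-style proof that $\omega_1\notin\Delta$ work (an open set containing a stationary set contains a tail, hence all intermediate limits), and I will try to reproduce it inside $T$. I would first reduce, using that $\Delta$ is hereditary, to a \emph{normal} Souslin tree $T$ (every node splits, every node has extensions at all levels $<\omega_1$, and each limit node is the supremum of its predecessors); in this case the only non-isolated points are the limit-level nodes, and a basic neighbourhood of such a node $t$ is an interval $(\rho,t]$ along the branch below $t$.

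Next I would manufacture the bad sequence from a stationary partition of the levels. Fix a partition $\{\Lambda_k:k\in\omega\}$ of a stationary set of limit ordinals into stationarily many stationary pieces, and set $D_n=\{t\in T:\mathrm{level}(t)\in\bigcup_{k\ge n}\Lambda_k\}$. Then $\{D_n\}$ is decreasing with empty intersection, and --- this is the point of using whole levels --- for each $n$ and each $\alpha\in\bigcup_{k\ge n}\Lambda_k$ we have $\Lev_\alpha(T)\subseteq D_n$. Suppose toward a contradiction that a decreasing open expansion $\{V_n\}$ with $\bigcap_nV_n=\emptyset$ exists, and pass to the complements $F_n=T\setminus V_n$: these are increasing, cover $T$, are closed under branch-limits (if $F_n$ is cofinal in the predecessors of a limit node $z$ then $z\in F_n$), and each $F_n$ is \emph{disjoint from every level in $\bigcup_{k\ge n}\Lambda_k$}. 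The goal is to contradict $\bigcap V_n=\emptyset$ by exhibiting a single node lying in all $V_n$.

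The heart of the argument --- and the step I expect to be the main obstacle --- is a reflection/pressing-down that must be run on the \emph{levels} rather than on the nodes. A naive tree-Fodor fails: the map sending each node to its immediate predecessor is regressive with all fibres countable, so one cannot hope to make a regressive function on nodes constant on an uncountable set. Instead I would use that $T$, being Souslin, is \emph{almost Souslin} (every antichain meets only non-stationarily many levels, since antichains are countable) together with the absence of uncountable branches. For each $n$ and each $t\in D_n$ openness of $V_n$ gives $\rho_n(t)<t$ with $(\rho_n(t),t]\subseteq V_n$; projecting to levels yields a regressive function on a stationary set of levels, and Fodor on $\omega_1$ produces a level $\beta_n$ and a node $r_n$ at level $\beta_n$ together with stationarily many levels $\alpha$ carrying a node $t_\alpha>r_n$ with $(r_n,t_\alpha]\subseteq V_n$. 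Since these $t_\alpha$ are uncountable and $T$ has no uncountable antichain, they cannot be pairwise incomparable, so one can thin to a node $z$ lying below cofinally many of them; by the capture-from-above mechanism each interval $(r_n,t_\alpha]$ passing through $z$ forces $z\in V_n$. The crux is to carry this out \emph{simultaneously for all} $n$ --- aligning the reflected nodes $r_n$ coherently (along a single branch or cone, with normality supplying the limit node that caps it) so that the countably many capturing intervals share one point $z$, giving $z\in\bigcap_nV_n$ and the desired contradiction. Managing this diagonalization across all $n$, and ensuring the capped branch survives at the relevant limit (exactly where Souslin trees are designed to let branches die), is where the real work lies, and it is precisely the combination ``no uncountable antichain $+$ no uncountable branch'' that both blocks the naive separation and supplies the reflection needed to defeat it.
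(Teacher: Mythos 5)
Your setup is sound and close in spirit to the paper's: a stationary partition of the limit levels, and the observation that the complements $F_n=T\setminus V_n$ are closed, cover $T$ (as $\bigcap_n V_n=\emptyset$), and each $F_n$ is disjoint from every level in $\bigcup_{k\geq n}\Lambda_k$. But the proof does not close: the step you yourself flag as ``where the real work lies'' --- producing a single $z\in\bigcap_n V_n$ by aligning the reflected nodes $r_n$ along one branch or cone --- is a genuine gap, and it is unlikely to be fillable in that form. For each fixed $n$ your Fodor-plus-pigeonhole argument yields only the downward closure $Z_n$ of uncountably many nodes $t_\alpha$ inside the cone above $r_n$; these are downward-closed subtrees meeting all sufficiently high levels, but countably many such cones need not share any point above all the $r_n$, and the object that would thread them --- an uncountable branch --- is exactly what a Souslin tree lacks. (A side error: ``no uncountable antichain, hence one can thin to a node $z$ below cofinally many $t_\alpha$'' is not a valid inference; the statement that is true, and needs no Souslinness, is that for every $\beta$ some node of the countable level $\Lev_\beta(T)$ lies below uncountably many of the $t_\alpha$, by pigeonhole.)

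The missing idea --- the heart of the paper's proof --- is that no simultaneous treatment of all $n$ is needed. Since the closed sets $F_n$ cover $T$, some single $F_n$ is uncountable. The key lemma is then: in a Souslin tree, the closure of any uncountable set $A$ meets a club of levels, hence meets $\{\Lev_\gamma(T):\gamma\in S\}$ for every stationary $S$. (Proof sketch: maximal antichains of $A\setminus T_\beta$ are countable, so for every $\beta$ there is $\alpha$ with $(A\cap T_\alpha)\setminus T_\beta$ predense in $A\setminus T_\beta$; the set $C$ of $\alpha$ for which this holds for all $\beta<\alpha$ is a club, and for $\alpha\in C$, taking $\alpha_n\nearrow\alpha$, maximal antichains $A_n\subseteq T_\alpha$ of $A\setminus T_{\alpha_n}$, and $s\in\Lev_\alpha(T)$ below some $t\in A\setminus T_\alpha$, the set $\bigcup_n A_n\subseteq A$ is cofinal below $s$, so $s\in\cl{A}$.) Applying this with $A=F_n$ (closed, so $\cl{A}=F_n$) and the stationary set $\Lambda_n$ produces a point of $F_n$ on a level contained in $D_n\subseteq V_n$, contradicting $F_n\cap V_n=\emptyset$ for that one well-chosen $n$. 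So your reduction and even your pressing-down machinery can stand, but the capture-from-above diagonalization must be replaced by this pigeonhole-to-one-$n$ plus closure-reflection step; without it, or an equivalent, the argument does not go through.
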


\begin{proof}
Let $T$ be a Souslin tree. For a subset $A\subseteq T$ and $B\subseteq A$ we say that $B$ is {\em predense in $A$} if for every $t\in A$ there is $s\in B$ such that $s$ and $t$ are comparable. 
Note that if $B$ is predense in $A$ and if $C\subseteq B$ is a maximal in $B$ antichain, then $C$ is also maximal in $A$. Note also that in a Souslin tree $T$, for any $A\subseteq T$ there is $\alpha<\omega_1$ such that $A\cap T_\alpha$ is predense in $A$. 

\begin{lemma}\label{lem1} If $S\subseteq\omega_1$ is stationary and $A\subseteq T$ is uncountable, then there is some $\gamma\in S$ such that
 $\cl{A}\cap \Lev_\gamma(T)\neq\emptyset$.
\end{lemma}

\begin{proof} Since the set $T_\beta$ is countable for every $\beta<\omega_1$ and $A$ is uncountable,
  $A\setminus T_\beta$ with the order inherited from $T$ is a Souslin tree. Let 
$$C=\{\alpha\in \omega_1: \forall \beta<\alpha\,\,\, (A\cap T_\alpha)\setminus T_\beta\text{ is predense in }A\setminus T_\beta\}.$$
Observe first that $C\subseteq \omega_1$ is closed. Second, $C$ is unbounded.
To see this, let $f:\omega_1\rightarrow \omega_1$ be defined by $$f(\beta)=\min\{\alpha>\beta: (A\cap T_\alpha)\setminus T_\beta\text{ is predense in }A\setminus T_\beta\}.$$
Then $f(\alpha) \in C$ whenever $\alpha \in C$. Thus, $C$ is a club set. 

Next we show that for every $\alpha\in C$ we have $\cl{A}\cap \Lev_\alpha(T) \not=\emptyset$ which completes the proof of the lemma. 
To see this, let $\alpha_n$ be an increasing sequence cofinal in $\alpha$ and for each $n$ let $A_n$ be a maximal antichain in $A\setminus T_{\alpha_n}$ such that $A_n\subseteq T_\alpha$ (we can do this since $\alpha\in C$). 
Then choosing $t\in A\setminus T_\alpha$, and $s\in \Lev_{\alpha}(T)$ such that $s\leq t$ we get that $\bigcup_n A_n$ is cofinal below $s$ and so $s\in \cl{A}$ as required.
\end{proof}

Now we finish the proof that $T$ cannot be a $\Delta$-space. Let $\{S_n\subset\omega_1: n\in\omega\}$ be any countable family of pairwise disjoint stationary sets and let
 $T_n=\bigcup_{\alpha\in S_n} \Lev_\alpha(T)$. By way of contradiction, suppose that there are open sets $U_n\supseteq T_n$ such that the family $\{U_n:n\in \omega\}$ is point-finite. 

For every $t\in T$ define  $\ord(t)=\{k\in\omega:t\in U_k\}$. Choose $n$ so that $A=\{t:n\not\in \ord(t)\}$ is uncountable. 
Now, using Lemma \ref{lem1}, we find $\gamma\in S_n$ and $s\in \Lev_\gamma(T)$ such that $s\in \cl{A}$. 
Thus, since $s\in T_n$ we have that $U_n\cap A\not=\emptyset$. This is a contradiction with $n\notin ord(t)$ for all $t\in A$. 
\end{proof}

Analyzing this proof we see that we used only the following property of a Souslin tree $T$: If $T$ is written as a union of countably many closed subsets,
 one of those subsets must intersect a club set of levels. 
We conjecture that there should be some clearer characterization of when trees are $\Delta$-spaces: 

\begin{problem} \label{problem4}
Find a characterization of trees which are $Q$-spaces / $\Delta$-spaces.
\end{problem}

For a partially ordered set $E$, $\sigma E$ denotes the set of all bounded well-ordered subsets of $E$.
 The order on $\sigma E$ is defined as usual: $s \leq t$ iff $s$ is an initial segment of $t$ \cite{Stevo1}.
Our aim is to consider the above Problem \ref{problem4} for trees of the form $\sigma E$. 

In \cite{Nyikos}, the following statement is claimed:  
($\ast$) $\sigma\Q$ is a ZFC example of an $\R$-embeddable tree which is not countably metacompact (therefore, $\sigma\Q$ is not a $\Delta$-space).
%This claim appears on pages 24 and  26 (arXiv version of Nyikos' paper, dated by 31 Dec, 2004). 
Unfortunately, the proof of this claim ($\ast$) has still not been published, nor could we independently
verify the validity of the statement.

However, under the conjecture that $\sigma\Q$ is not a $\Delta$-space, we would obtain the following (provisional) result.

\begin{conjecture}\label{Theor:sigma-E} Let $E$ be a linearly ordered set. Then the following conditions are equivalent{\rm:}
\begin{enumerate}
\item[{\rm (1)}] Neither $\omega_1$ nor rationals $\Q$ as linearly ordered sets are contained in $E$.
\item[{\rm (2)}] $\sigma E$ is a special tree.
\item[{\rm (3)}] $\sigma E$ is a $Q$-space.
\item[{\rm (4)}] $\sigma E$ is a $\Delta$-space.
\end{enumerate}
\end{conjecture}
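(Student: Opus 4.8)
The plan is to establish the equivalence as a cycle $(1)\Rightarrow(2)\Rightarrow(3)\Rightarrow(4)\Rightarrow(1)$. Two of the links are already available: $(2)\Rightarrow(3)$ is Fact~\ref{fact1} (a special tree is $\sigma$-closed discrete, hence a $Q$-space), and $(3)\Rightarrow(4)$ is the first recorded property of $Q$-spaces ($Q$-space $\Rightarrow$ $\Delta$-space). So the mathematical content is concentrated in $(1)\Rightarrow(2)$ and in the contrapositive of $(4)\Rightarrow(1)$.

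For $(4)\Rightarrow(1)$ I would argue the contrapositive and split into the two ways condition (1) can fail; note that, for a linear order, $\Q$ embeds into $E$ precisely when $E$ is non-scattered, so failure of (1) means $\omega_1\hookrightarrow E$ or $\Q\hookrightarrow E$. In the first case, fix an increasing $\omega_1$-sequence $\{a_\alpha:\alpha<\omega_1\}$ in $E$; the initial segments $t_\alpha=\{a_\beta:\beta<\alpha\}$ are bounded well-ordered subsets, so they lie in $\sigma E$ and form an uncountable branch. As observed before the statement, no $\Delta$-tree has an uncountable branch (since $\omega_1\notin\Delta$), whence $\sigma E\notin\Delta$. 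In the second case, an order-embedding of $\Q$ into $E$ induces $J\colon\sigma\Q\to\sigma E$ sending $t$ to its image; the key point is that the predecessors of $J(t)$ in $\sigma E$ are exactly the images of the predecessors of $t$ in $\sigma\Q$, so $J$ carries the basic intervals of $\sigma\Q$ to the traces on $J[\sigma\Q]$ of the basic intervals of $\sigma E$, i.e. $J$ is a homeomorphism onto its image for the interval topologies. Thus $\sigma\Q$ embeds as a subspace of $\sigma E$, and invoking the claim~$(\ast)$ that $\sigma\Q\notin\Delta$ together with the heredity of the class $\Delta$ gives $\sigma E\notin\Delta$. This appeal to $(\ast)$ is the only unverified ingredient, which is exactly what makes the whole statement provisional.

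The substantive step is $(1)\Rightarrow(2)$: if $E$ is scattered and has no uncountable well-ordered subset, then $\sigma E$ is special. I would prove this by induction on the Hausdorff rank of the scattered order $E$, using a Hausdorff decomposition $E=\sum_{i\in I}E_i$ into summands of strictly smaller rank along an index set $I$ that is an ordinal or a reverse ordinal. The no-$\omega_1$ hypothesis forces every genuinely well-ordered index $I$ to be countable, since an $\omega_1$-indexed sum of nonempty pieces would embed $\omega_1$; while for a reverse-ordinal index the support projection $\pi(t)=\{i\in I:t\cap E_i\neq\emptyset\}$ of any $t\in\sigma E$ is finite, because well-ordered subsets of a reverse ordinal are finite. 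In general $\pi(t)$ is always a well-ordered subset of $I$ (a decreasing sequence in $\pi(t)$ would give a decreasing sequence in the well-ordered $t$). Using $\pi$, I would lift the inductively given specializing functions $\sigma E_i\to\omega$ and combine them with a specialization of the skeleton tree built on $I$, producing a single map $\sigma E\to\omega$ that is injective on chains, i.e. exhibiting $\sigma E$ as a countable union of antichains.

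The hard part will be precisely this gluing in $(1)\Rightarrow(2)$: organizing the bookkeeping so that the specializing functions of the summands $\sigma E_i$ assemble, uniformly across well-ordered and reverse-well-ordered index sets, into one specializing function on $\sigma E$, while controlling how a single $t\in\sigma E$ is distributed across the blocks $E_i$. (Alternatively, the equivalence $(1)\Leftrightarrow(2)$ is a purely combinatorial fact about $\sigma$-trees and could be cited from the literature on trees associated with linear orders.) Everything else reduces to the facts already recorded, so the genuine set-theoretic risk is isolated in the use of~$(\ast)$.
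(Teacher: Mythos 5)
Your proposal is correct and, for three of the four links, is exactly the paper's argument: (2)$\Rightarrow$(3)$\Rightarrow$(4) are quoted from the same recorded facts, and your (4)$\Rightarrow$(1) --- the $\omega_1$-branch of initial segments $t_\alpha=\{a_\beta:\beta<\alpha\}$, the induced map $J\colon\sigma\Q\to\sigma E$, heredity of the class $\Delta$, and the conditional appeal to $(\ast)$ --- is precisely what the paper does, with useful detail the paper leaves implicit (your verification that the predecessors of $J(t)$ in $\sigma E$ are exactly the images of the predecessors of $t$, so $J$ is a homeomorphism onto its image, and that the chain of $t_\alpha$'s is downward closed and hence carries the order topology of $\omega_1$, are both correct). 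The one genuine divergence is (1)$\Rightarrow$(2): the paper proves nothing here, citing Todor\v cevi\'c \cite{Stevo1} for the known equivalence (1)$\Leftrightarrow$(2), whereas you sketch a fresh induction on Hausdorff rank. Since you explicitly offer citation as an alternative, the proposal as a whole is sound; but the induction as sketched has a concrete unresolved obstruction beyond the gluing bookkeeping you flag: for $t\in\sigma E$ and a decomposition $E=\sum_{i\in I}E_i$, the trace $t\cap E_i$ is well-ordered but need not be \emph{bounded in} $E_i$ --- it can be cofinal in $E_i$ while $t$ is bounded in $E$ by elements of later blocks --- so it need not lie in $\sigma E_i$, and the inductively given specializing functions on the $\sigma E_i$ do not directly apply to it. One would have to strengthen the induction hypothesis to all well-ordered subsets of the summands (bounded or not), or handle cofinal traces separately; this, together with the chain-injective combination along the skeleton on $I$, is essentially the content of Todor\v cevi\'c's theorem rather than routine bookkeeping. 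With the citation fallback, you correctly isolate $(\ast)$ as the sole unverified ingredient, which is exactly why the paper records the statement as a conjecture rather than a theorem.
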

%\begin{proof}
(1) $\Leftrightarrow$ (2) is known \cite{Stevo1}. (2) $\Rightarrow$ (3) $\Rightarrow$ (4) have been mentioned before.
Assume (4). Then $E$ does not contain $\omega_1$ because $\omega_1$ is not a $\Delta$-space.
 And assuming that the claim ($\ast$) is true, we would conclude that $E$ could not contain a copy of $\Q$.
This finishes the proof of (4) $\Rightarrow$ (1).
%\end{proof}
\begin{problem} \label{problem5}
Prove or disprove the claim ($\ast$): $\sigma\Q$ is not countably metacompact.
\end{problem}

%%%%%%%%%%%%%%%%%%%%%%%%%%%%% 
\section{MAD families}\label{Section3}
%%%%%%%%%%%%%%%%%%%%%%%%%%%%%
Throughout this section $D$ is an infinite set and $\acal$ is an almost disjoint (AD) family
of countable subsets of $D$. A topological space $\Psi(D,\acal)$ is defined in a standard way:
the underlying set of $\Psi(D,\acal)$ is $D \bigcup \acal$, the points of $D$ are isolated and 
a base of neighborhoods of $A \in \acal$ is the collection of all sets of the form
$\{A\} \cup B$, where $A \setminus B$ is finite. It is known that every first-countable locally compact space
in which the derived set is discrete  is homeomorphic to some $\Psi(D,\acal)$.
Every $\Psi(D,\acal)$ is a Tychonoff space because it is Hausdorff and zero-dimensional.
Note also that all spaces $\Psi(D,\acal)$ are scattered with the Cantor-Bendixson rank equal to $2$. 

Observe that every subset of $\acal$ is closed in $\Psi(D,\acal)$. Further, since $D$ consists of isolated points of $\Psi(D,\acal)$
we obtain the following easy fact:

\begin{proposition} \label{prop_cmc} Let $Y$ be any space $\Psi(D,\acal)$.
Then $Y\in \Delta$ if and only if $Y$ is a countably metacompact space.
\end{proposition}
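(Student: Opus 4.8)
The plan is to prove the equivalence by showing that the $\Delta$-property and countable metacompactness coincide for spaces of the form $\Psi(D,\acal)$. One direction is immediate: every $\Delta$-space is (hereditarily) countably metacompact, as recorded in item \ding{193} of the auxiliary notions, so $Y \in \Delta$ trivially implies that $Y$ is countably metacompact. The substance of the proposition is the reverse implication, and the key structural feature to exploit is that $D$ consists of isolated points while every subset of $\acal$ is closed.

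For the reverse direction, I would start with an arbitrary decreasing sequence $\{D_n : n \in \omega\}$ of subsets of $Y$ with empty intersection, and I must produce a decreasing point-finite (indeed with empty intersection) open expansion $\{V_n\}$. The first reduction is to separate the isolated part from the $\acal$-part. Write $D_n = (D_n \cap D) \cup (D_n \cap \acal)$. The isolated points cause no trouble: for any set $S \subseteq D$, the set $S$ itself is open, so the ``$D$-part'' of each $D_n$ can be absorbed into the open expansion for free. Concretely, I would plan to define $V_n = W_n \cup (D_n \cap D)$, where $W_n$ is an open expansion handling the closed traces on $\acal$. Since the isolated-point contributions $D_n \cap D$ are themselves decreasing with empty intersection on $D$, they will not obstruct emptiness of $\bigcap V_n$ provided $\bigcap W_n \cap \acal = \emptyset$ and no stray isolated point survives.

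The heart of the matter is thus the trace on the closed discrete set $\acal$. Here I would pass to closures: set $E_n = \cl{D_n}$, which gives a decreasing sequence of closed sets. Because $\acal$ is closed discrete in $Y$ and $D$ is isolated, the closure of $D_n$ adds only those $A \in \acal$ that are limits of $D_n \cap D$, so $\bigcap_n E_n \subseteq \acal$ in general may be nonempty even when $\bigcap D_n = \emptyset$; the correct move is to first note that emptiness of $\bigcap D_n$ together with the point-finite-expansion reformulation lets me reindex to a countable \emph{disjoint} collection via the standard device $D_n \setminus D_{n+1}$. Applying countable metacompactness to the decreasing sequence of \emph{closed} sets $\{\cl{D_n}\}$ (whose intersection I can arrange to be empty after handling the isolated residue) yields a decreasing open expansion $\{W_n\}$ with empty intersection and $\cl{D_n} \subseteq W_n$, hence $D_n \subseteq W_n$. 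Combining $W_n$ with the open isolated piece as above produces the desired $\{V_n\}$, and the verification that $\bigcap V_n = \emptyset$ and that the expansion is decreasing is routine.

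I expect the main obstacle to be the bookkeeping around the isolated points: the naive attempt to take closures can inflate $\bigcap \cl{D_n}$ beyond the empty set, so the delicate step is to verify that the $\acal$-points appearing in the intersection of the closures are exactly controllable, and that the isolated residues can be added back into the open sets without recreating a nonempty intersection. The cleanest route is probably to reformulate the $\Delta$-condition using Theorem \ref{Theor:description}(2) in terms of point-finite open expansions of a countable disjoint family (the ``differences'' $D_n \setminus D_{n+1}$), where the closedness of every subset of $\acal$ makes the metacompactness hypothesis directly applicable to the pieces lying in $\acal$, while each piece lying in $D$ is open and can serve as its own expansion. This sidesteps the closure inflation entirely and is the argument I would write up.
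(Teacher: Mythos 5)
Your proposal is correct in substance, and its ingredients are exactly the two observations the paper itself leans on --- indeed the paper states this proposition without a written proof, immediately after remarking that every subset of $\acal$ is closed in $\Psi(D,\acal)$ and that $D$ consists of isolated points, so the intended ``easy fact'' argument is the direct one you sketch in your second paragraph. Where you lose the thread is the third paragraph: the ``closure inflation'' obstacle you wrestle with is entirely self-created. There is no need to pass to $\cl{D_n}$ to obtain closed sets, because the trace $D_n\cap\acal$ is \emph{already} closed, precisely since every subset of $\acal$ is closed. The $\Delta$-definition can thus be verified directly: $\{D_n\cap\acal: n\in\omega\}$ is a decreasing sequence of closed sets with empty intersection, countable metacompactness yields a decreasing open expansion $\{W_n\}$ with $\bigcap_n W_n=\emptyset$, and $V_n=W_n\cup(D_n\cap D)$ is a decreasing open expansion of $\{D_n\}$; if $y\in\bigcap_n V_n$, then by monotonicity either $y\in W_n$ for all $n$ or $y\in D_n\cap D$ for all $n$, and both are impossible. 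No disjointification, no closures, no point-finiteness bookkeeping.

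Your fallback route via Theorem \ref{Theor:description}(2) also works, but note one step you gloss over: countable metacompactness does not apply ``directly'' to a disjoint family of closed sets $\{A_n\cap\acal\}$; one must first pass to the decreasing tails $F_n=\bigcup_{k\geq n}(A_k\cap\acal)$, and it is again the fact that every subset of $\acal$ is closed that makes these tails closed (a countable union of closed sets need not be closed in general). With that supplied, the tails have empty intersection by disjointness, the resulting decreasing open expansion with empty intersection is automatically point-finite, and the pieces $A_n\cap D$ are open and pairwise disjoint, so your write-up plan does go through --- it is just a slightly longer road to the same destination.
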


If $D$ is the set of natural numbers $\N$ and $\acal$ is a maximal almost disjoint (MAD) family of subsets of $\N$, then
$\Psi(D,\acal)$ is called the Isbell-Mr\'owka space and is denoted by $\Psi(\acal)$. 
%Note that a standard Zorn's Lemma argument implies that any almost disjoint family of countable subsets of an infinite set $D$ can be extended to a maximal almost disjoint family on $D$. The cardinal invariant ${\mathfrak a}$ is defined as the minimal size of an almost disjoint family on $\N$. 

\begin{theorem}{\rm (\cite{KL})} \label{Theor:height_2}
Let $\acal$ be any uncountable almost disjoint family of subsets of $\N$.
Denote by $X$ the one-point compactification of the space $\Psi(\N,\acal)$.
 Then $X \in \Delta$, while $X$ is not an Eberlein compact space.
\end{theorem}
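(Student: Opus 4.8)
The statement bundles two independent assertions, so I would treat them separately, using Definition~\ref{def:Delta} directly for membership in $\Delta$.

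For the claim $X\in\Delta$ I would verify the defining property by hand. Let $\{D_n:n\in\omega\}$ be a decreasing sequence of subsets of $X$ with $\bigcap_n D_n=\emptyset$. Since the sequence is decreasing and its intersection is empty, there is a least $n_0$ with $\infty\notin D_{n_0}$, and $\infty\notin D_n$ for all $n\ge n_0$. For $n<n_0$ I simply set $V_n=X$. For $n\ge n_0$, recalling that the points of $\N$ are isolated and that a basic neighbourhood of $A\in\acal$ has the form $\{A\}\cup(A\setminus F)$ with $F$ finite, I would put
\[
V_n=(D_n\cap\N)\cup\bigcup\bigl\{\{A\}\cup\bigl(A\setminus\{0,1,\dots,n-1\}\bigr):A\in D_n\cap\acal\bigr\}.
\]
Each $V_n$ is open in $X$ (a union of isolated points and basic neighbourhoods, and $\infty\notin V_n$, so openness in $\Psi(\N,\acal)$ suffices), contains $D_n$, and the family is decreasing, since $D_n$ is decreasing and the tails $A\setminus\{0,\dots,n-1\}$ shrink with $n$. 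The one point that needs genuine care is $\bigcap_n V_n=\emptyset$: a member $A\in\acal$ lies in $V_n$ only as a ``centre'', i.e.\ exactly when $A\in D_n$, so it leaves; and a natural number $k$ lies in $V_n$ only if $k\in D_n$ or $k\in A\setminus\{0,\dots,n-1\}$ for some $A\in D_n\cap\acal$, and the latter forces $k\ge n$. Thus $k$ is expelled from every tail once $n>k$, uniformly in $A$. This uniform expulsion is precisely what the natural well-ordering of the underlying set $\N$ buys us, and it is the crux of the argument; the almost disjointness of $\acal$ is used only to guarantee that $\Psi(\N,\acal)$ is locally compact (so that $X$ is well defined) and not in the expansion itself.

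I would flag this last point as the genuine subtlety rather than a routine check, because it is exactly where the hypothesis $D=\N$ is essential: over an uncountable index set of isolated points one cannot choose the tails so that each point exits all of them simultaneously, and indeed $\Psi$-spaces over uncountable ground sets need not be $\Delta$ (this is the whole thrust of Proposition~\ref{prop_cmc} and the counterexamples of Section~\ref{Section3}). So the proof must visibly exploit the countability of $\N$.

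For the second assertion I would argue that $X$ cannot be Eberlein by combining separability with non-metrizability. First, $\N$ is dense in $X$ (every nonempty basic open set meets $\N$), so $X$ is separable. On the other hand, the subspace $\acal\cup\{\infty\}$ is homeomorphic to the one-point compactification of the discrete space $\acal$: each $A$ is isolated in it via the neighbourhood $\{A\}\cup A$, while neighbourhoods of $\infty$ are cofinite in $\acal$. As $|\acal|$ is uncountable, this subspace is not first countable at $\infty$, and since first countability is hereditary, $X$ is not first countable at $\infty$; in particular $X$ is not metrizable. Finally I would invoke the classical fact that every Eberlein compact space is Corson compact and hence monolithic, so that a separable Eberlein compactum has countable network weight and is therefore metrizable (for compact Hausdorff spaces network weight equals weight). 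If $X$ were Eberlein it would thus be metrizable, contradicting the previous paragraph; hence $X$ is not Eberlein compact. The only non-elementary ingredient here is the monolithicity of Corson compacta (equivalently, ``separable Eberlein compact $\Rightarrow$ metrizable''), which I would cite rather than reprove; everything else is a direct computation in the definition of the one-point compactification.
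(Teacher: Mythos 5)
Your proof is correct, but it takes a different route from the one the paper relies on: Theorem~\ref{Theor:height_2} is quoted from \cite{KL}, and the short proof implicit in the machinery this paper records goes as follows. $\Psi(\N,\acal)=\acal\cup\bigcup_{n\in\omega}\{n\}$ is $\sigma$-closed discrete (each subset of $\acal$ is closed, and singletons of $\N$ are closed), hence a $Q$-space and so a $\Delta$-space; then one invokes the fact, stated in the Introduction, that adding a finite set (here the single point $\infty$) to a $\Delta$-space does not destroy the $\Delta$-property, so $X\in\Delta$. You instead verify Definition~\ref{def:Delta} directly at the level of the compactification, and your construction is sound: the sets $V_n$ are open (openness in the open subspace $\Psi(\N,\acal)$ suffices since $\infty\notin V_n$ for $n\ge n_0$), decreasing, expand the $D_n$, and the intersection is empty --- for $A\in\acal$ because $A$ enters $V_n$ only as a centre, and for $k\in\N$ because after $n>k$ membership forces $k\in D_n\cap\N$, which fails eventually since $\bigcap_n D_n=\emptyset$ (you leave this last half-step implicit, but it is immediate). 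Your direct argument buys a self-contained verification and makes visible exactly where the countability of the ground set $\N$ enters --- a point the abstract route hides inside ``$\sigma$-closed discrete'' and which, as you correctly note, is what fails for $\Psi(D,\acal)$ with $|D|=\aleph_1$ in Section~\ref{Section3}; the paper's route buys brevity and reusability (the same two lemmas handle Example~\ref{example2} as well). One small refinement: almost disjointness is needed not merely for local compactness but for Hausdorffness of $\Psi(\N,\acal)$, without which the one-point compactification would not be a (Tychonoff) compact space. For the non-Eberlein half, your argument (separable, yet the subspace $\acal\cup\{\infty\}$ is a one-point compactification of an uncountable discrete set, so $X$ is non-metrizable, contradicting ``separable Eberlein $\Rightarrow$ metrizable'') is the standard one and matches \cite{KL} in substance; an even quicker variant is to note that a separable Eberlein compactum is second countable hence hereditarily separable, while $\acal$ is an uncountable discrete subspace of $X$.
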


Let $D$ be $\omega_1$ and $\acal$ be a MAD family of countable subsets of $\omega_1$.
It is unknown whether the space $\Psi(D,\acal)$ in this case can be countably metacompact (therefore, a $\Delta$-space)
for some MAD family $\acal$ in some model of ZFC \cite{Burke}. 

Recall that the cardinal invariant $\a$ is defined as the minimal size of a MAD family on $\N$ and 
$\b$ denotes the bounding number (see \cite{vanDouwen}).
The details about Theorems \ref{Theor:Dennis1} and \ref{Theor:Paul1}
below can be found in \cite{Burke}, \cite{Paul_SZ}.

\begin{theorem}{\rm (\cite{Burke})} \label{Theor:Dennis1}
Assume $\a =\cont$. Then for every MAD family of countable subsets of $D=\omega_1$ the space 
$\Psi(D,\acal)$ is not countably metacompact.
Therefore, $\Psi(D,\acal) \notin \Delta$.
\end{theorem}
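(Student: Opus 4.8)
The plan is to invoke Proposition~\ref{prop_cmc}, which reduces the statement to proving that $\Psi(D,\acal)$ is \emph{not} countably metacompact; so I would exhibit a decreasing sequence of closed sets with empty intersection admitting no decreasing open expansion with empty intersection. The only real difficulty is that the isolated points form the uncountable set $\omega_1$, so an arbitrary open expansion is coded by a function on $\omega_1$, and there are too many of these to diagonalize against directly. The first step is therefore to confine attention to a countable ``test set'': I would fix a countably infinite $W\sub\omega_1$ on which $\acal$ restricts to an \emph{infinite} almost disjoint family. Such a $W$ exists because any MAD family of countable subsets of $\omega_1$ is uncountable (a countable one would miss an infinite subset of $\omega_1$): choosing distinct $B_n\in\acal$ and infinite $C_n\sub B_n\sm\bigcup_{m<n}B_m$ and setting $W=\bigcup_nC_n$, each $B_n$ meets $W$ infinitely.

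The key structural fact is that for \emph{every} infinite $H\sub\omega_1$ the trace $\acal\restriction H=\{A\cap H:A\in\acal,\ |A\cap H|=\aleph_0\}$ is MAD on $H$: given infinite $B\sub H$, maximality of $\acal$ on $\omega_1$ yields $A\in\acal$ with $A\cap B$ infinite, and since $B\sub H$ this $A$ automatically meets $H$ infinitely. Applied to $W$ this shows $\acal_W:=\acal\restriction W$ is an \emph{infinite} MAD family on $W\cong\omega$, and here is the only place the hypothesis is used: an infinite MAD family has size at least $\a$, so $\a=\cont$ forces $|\acal_W|=\cont$.

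Identifying $W$ with $\omega$, every open expansion will be governed on $W$ by a function $\kappa:\omega\to\omega$, and there are exactly $\cont$ of these. So I would enumerate them as $\{\kappa_\alpha:\alpha<\cont\}$ and build a rank $r:\acal\to\omega$ recursively: at stage $\alpha$ pick a member $A_\alpha\in\acal_W$ not used before (possible since $|\acal_W|=\cont$), then pick $k_\alpha$ large enough that $A_\alpha\cap W\cap\{m:\kappa_\alpha(m)<k_\alpha\}$ is infinite (possible since $A_\alpha\cap W$ is an infinite subset of $W=\bigcup_k\{m:\kappa_\alpha(m)<k\}$); set $r(A_\alpha)=k_\alpha$ and $r(A)=0$ otherwise. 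The obstruction is the decreasing sequence of closed sets $\gcal_n=\{A\in\acal:r(A)\ge n\}$ (subsets of $\acal$ are closed), whose intersection is empty because $r$ is finite-valued.

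To finish, suppose $V_n\supseteq\gcal_n$ is a decreasing open expansion; for $A\in\gcal_n$ choose finite $s^n_A\sub A$ with $A\sm s^n_A\sub V_n$, increasing in $n$, and put $\kappa(m)=\sup\{r(A):A\in\acal_W,\ m\in A\sm s^{r(A)}_A\}$ for $m\in W$. If $\kappa(m)=\infty$ for some $m$, then $m\in V_n$ for all $n$ and the expansion fails; otherwise $\kappa:\omega\to\omega$, and since each $s^{r(A)}_A$ is finite, every $A\in\acal_W$ satisfies $\kappa(m)\ge r(A)$ for all but finitely many $m\in A\cap W$. Taking $\kappa=\kappa_\alpha$ and $A=A_\alpha$ contradicts the stage-$\alpha$ choice, so no expansion exists. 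I expect the main obstacle to be precisely this last step: extracting from an arbitrary open expansion the single function $\kappa$ on the countable test set, and verifying that a successful expansion forces the almost-everywhere domination $\kappa\ge r(A)$ simultaneously for all $A\in\acal_W$. Pinning down that equivalence is what makes the length-$\cont$ diagonalization — and with it the hypothesis $\a=\cont$ — do its job.
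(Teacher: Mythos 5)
First, note that the paper itself gives no proof of Theorem~\ref{Theor:Dennis1}: it is quoted from Burke's paper, so your attempt has to be measured against Burke's argument and against the paper's closely related ZFC construction in Example~\ref{example1}. Several of your ingredients are correct and are genuine parts of the real proof: the reduction via Proposition~\ref{prop_cmc}, the translation of a decreasing open expansion into a single function $\kappa$ with $\kappa\geq r(A)$ on all but finitely many points of each $A$, the observation that the trace of a MAD family on any infinite $H\sub\omega_1$ is MAD on $H$, and the consequence that under $\a=\cont$ every \emph{infinite} trace family on a countable set has size exactly $\cont$. But the diagonalization has a fatal gap at its central step: at stage $\alpha$ you pick an arbitrary unused $A_\alpha$ and claim some $k_\alpha$ makes $A_\alpha\cap W\cap\{m:\kappa_\alpha(m)<k_\alpha\}$ infinite ``since $A_\alpha\cap W$ is an infinite subset of $\bigcup_k\{m:\kappa_\alpha(m)<k\}$.'' An infinite subset of an increasing union need not meet any single stage infinitely. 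If $\kappa_\alpha$ is injective, or merely finite-to-one on $W$, then every level set $\{m\in W:\kappa_\alpha(m)<k\}$ is finite, so \emph{no} choice of $A_\alpha$ and $k_\alpha$ whatsoever defeats $\kappa_\alpha$ inside $W$; the recursion breaks at all such stages, and the final contradiction (``taking $\kappa=\kappa_\alpha$ and $A=A_\alpha$'') is vacuous precisely there.

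This is not a local slip but a structural inadequacy of the single fixed test set: the subspace $\Psi(W,\acal_W)$ is an Isbell--Mr\'owka-type space over a countable set of isolated points, hence $\sigma$-closed discrete and countably metacompact, so for \emph{every} rank function $r$ on the traces there actually exists $\kappa_W:W\rightarrow\omega$ dominating all traces in your sense. In other words, no obstruction to countable metacompactness can be coded inside a countable set chosen in advance, and the adversary's expansion restricted to $W$ may literally be such a dominating finite-to-one function. This is exactly why Burke's proof, and the paper's ZFC analogue in Example~\ref{example1}, let the countable test set \emph{depend on the expansion}: from a putative point-finite expansion one first extracts an uncountable $S\sub\omega_1$ on which the maximal ``exit index'' $k_\gamma$ is a constant $k$, and only then finds a countable $Y\sub S$ with $\cont$-many members of $\acal$ tracing infinitely on $Y$, having pre-enumerated all $\cont$-many candidate sets $Y$ and pre-built the partition of $\acal$ to defeat each one. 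Your trace-MADness observation together with $\a=\cont$ is the right tool for that missing lemma (a trace family on a countable $Y\sub S$ is MAD on $Y$, hence finite or of size $\cont$, and one must rule out that it is finite for every countable $Y\sub S$ --- an additional argument you would still need to supply). As written, your construction proves nothing against any $\kappa$ that is finite-to-one on $W$, so the proof does not go through.
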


\begin{theorem}{\rm (\cite{Paul_SZ})} \label{Theor:Paul1}
If $\cont=\aleph_2$ or if $\b^{+}=\cont$, then for every MAD family of countable subsets of $D=\omega_1$ the space 
$\Psi(D,\acal)$ is not countably metacompact.
Therefore, $\Psi(D,\acal) \notin \Delta$.
\end{theorem}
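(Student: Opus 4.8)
The plan is to invoke Proposition~\ref{prop_cmc}, which reduces the task to showing that $\Psi(D,\acal)$ is \emph{not} countably metacompact, and then to produce an explicit decreasing sequence of closed sets witnessing the failure. First I would record the structural features that will be used: the set $\acal$ is closed and discrete; each $A\in\acal$, being a countable subset of $\omega_1$, is bounded, so $\rho(A)=\sup A<\omega_1$; and, crucially, for every countable limit $\delta$ the trace $\{A\cap\delta: A\in\acal,\ |A\cap\delta|=\aleph_0\}$ is a MAD family on $\delta\cong\omega$, a direct consequence of the maximality of $\acal$. I would then reformulate countable metacompactness combinatorially. Fixing any $h\colon\acal\to\omega$ and setting $\mcal_n=\{A\in\acal: h(A)\ge n\}$ yields a decreasing sequence of closed sets with empty intersection; writing $P$ for the set of ``persistent'' isolated points $d$ that lie in members of arbitrarily large $h$-value, a routine analysis of the neighbourhood tails $A\setminus F$ shows that these closed sets admit a decreasing open expansion with empty intersection \emph{iff} there is a \emph{leveling} $\ell\colon P\to\omega$ such that $\{d\in A: \ell(d)\le h(A)\}$ is finite for every $A\in\acal$. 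Thus it suffices to build a single $h$ for which no leveling exists.

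The construction of such an $h$ is the heart of the matter, and this is where the cardinal hypotheses enter, through a $\le^{*}$-unbounded family $\{f_\xi:\xi<\b\}$. The idea is to plant $\b$-many ``blocks'', each a copy of $\omega$ inside $P$, and to attach to each block members of $\acal$ whose $h$-values code $f_\xi$; maximality of $\acal$ at the countable levels $\delta$ guarantees that the prescribed infinite blocks can in fact be realized as traces of genuine members of the given family. A single trial leveling $\ell$ restricts to each block, and the coding is arranged so that if $\ell$ were a leveling then the restrictions would assemble into one function of $\omega^{\omega}$ dominating every $f_\xi$, contradicting unboundedness of a family of size $\b$. The two alternative hypotheses serve to keep the global bookkeeping finite-dimensional: under $\b^{+}=\cont$ the relevant points, blocks, and colourings to be diagonalized against are enumerable in $\b^{+}=\cont$ steps with a single extra level of pigeonholing, while the hypothesis $\cont=\aleph_2$ is included precisely to absorb the residual case $\b=\cont=\aleph_2$ by a direct enumeration of length $\omega_2$.

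The main obstacle I anticipate is that $\acal$ is \emph{given} and maximal, rather than constructed: the bad sequence must be located inside an arbitrary MAD family, so one cannot simply prescribe the blocks and their almost-disjointness pattern. Maximality supplies members of $\acal$ meeting any chosen infinite set infinitely, but one must select these infinite sets coherently along a continuous increasing chain of countable ordinals cofinal in $\omega_1$ so that (i) the resulting members remain pairwise almost disjoint, (ii) their $h$-values correctly code the $f_\xi$, and (iii) the whole configuration has size at most $\cont$. The delicate point is the passage from a hypothetical leveling $\ell$ on $P$ to a single dominating function in $\omega^{\omega}$: this is exactly the step that forces the count $\cont\le\b^{+}$, and it is the reason the theorem splits into the two cases $\b^{+}=\cont$ and $\b=\cont=\aleph_2$ rather than holding outright in ZFC.
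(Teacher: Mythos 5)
Your opening moves are sound and match the standard approach: the reduction via Proposition~\ref{prop_cmc}, the translation of countable metacompactness into the existence of a ``leveling'' $\ell$ with $\{d\in A:\ell(d)\le h(A)\}$ finite for every $A\in\acal$, and the observation that maximality of $\acal$ makes the traces on countable limit ordinals MAD families, are all correct and are indeed the combinatorial skeleton of the cited proofs (note that the paper itself gives no proof of this theorem; it refers to \cite{Paul_SZ} and \cite{Burke}, and the same skeleton is visible in Example~\ref{example1}).

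The heart of your argument, however, has a genuine gap: the block-coding mechanism cannot produce the desired contradiction. If $\ell$ is a hypothetical leveling and the blocks $B_\xi$ ($\xi<\b$) are pairwise disjoint copies of $\omega$ whose attached members of $\acal$ code $f_\xi$, then restricting $\ell$ to $B_\xi$ yields, for each $\xi$ \emph{separately}, a function $g_\xi\in\omega^\omega$ with $g_\xi\ge^* f_\xi$. This contradicts nothing: every individual member of an unbounded family is dominated by some function; unboundedness is only violated by a \emph{single} $g$ dominating all $f_\xi$ simultaneously, and there is no way to merge the $g_\xi$ into one function (a coordinatewise supremum over $\b$-many blocks need not be finite). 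You flag this ``passage to a single dominating function'' as delicate, but no mechanism for it is proposed, and in the form described it fails. In the known arguments the single function comes from a different source entirely: pigeonholing the hypothetical leveling itself, which, being a map $\omega_1\to\omega$, is constant, say equal to $k$, on an uncountable $S\subseteq\omega_1$; one then needs $h$ to have been built so that every countable infinite $Y\subseteq\omega_1$ meets members $A$ with $|A\cap Y|=\aleph_0$ and $h(A)>k$ --- precisely the scheme of Example~\ref{example1} and of Theorem~\ref{Theor:Dennis1}, where $\a=\cont$ guarantees each trace family has size $\ge\a$ and so survives a length-$\cont$ recursion over all countable $Y$. The actual difficulty that the hypothesis $\b^{+}=\cont$ addresses is that when $\a$ may equal $\b<\cont$ this recursion threatens to exhaust the trace families, and your sketch does not engage with that problem at all. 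Finally, your reading of the case split is off: $\cont=\aleph_2$ needs no separate enumeration of length $\omega_2$, since $\b\le\a$ gives either $\b=\aleph_1$ (so $\b^{+}=\cont$) or $\b=\a=\cont$, in which case Theorem~\ref{Theor:Dennis1} already applies.
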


A sketch of the construction of the following example was given in \cite{Burke}.
\begin{example} \label{example1}  (\cite{Burke}) In ZFC there exists a MAD family of countable subsets of $D=\omega_1$
such that the space $\Psi(D,\acal)$ is not countably metacompact.

For completeness sake we describe the construction. Identify $\omega_1$ with a subset of $[0,1]$ via an injection $j:\omega_1\rightarrow [0,1]$. Let ${\mathcal A}$ be a MAD family
 of countable subsets of $\omega_1$ so that for each $a\in {\mathcal A}$, $j(a)$ is a convergent sequence in $[0,1]$.  Note that for any subset $S\subset\omega_1$, if $\cl{j(S)}$ in $[0,1]$
 has size ${\mathfrak c}$, then $\cl{S}$ in $\Psi({\mathcal A})$ also has size ${\mathfrak c}$. Since any uncountable subset of $[0,1]$ has closure of size $\cont$,
 it follows that the MAD family $\acal$ has the following key property: For any uncountable subset $S\subseteq \omega_1$ there is a countable $Y\subseteq S$ such that 
the set $\{a\in \acal: |a\cap Y|=\aleph_0\}$ has size $\cont$.

We use this property to define a partition of $\acal=\bigcup_{n\in\omega}\acal_n$ as follows. Enumerate all countable subsets of $\omega_1$ that have uncountable closure in $\Psi(\acal)$
 as $\{Y_\alpha:\alpha<\cont\}$. Recursively choose distinct elements $\{a_{\alpha,n}:\alpha<\cont, n\in \omega\}$ such that for each $\alpha<\cont$ and $n\in\omega$,
the set  $a_{\alpha, n} \cap Y_\alpha$ is infinite. Then fix the partition $\acal=\bigcup_{n\in\omega}\acal_n$ so that the following inclusion holds:
$\{a_{\alpha,n}:\alpha<\cont\}\subseteq \acal_n$ for all $n\in\omega$. 

\begin{claim} The partition $\{\acal_n: n\in \omega\}$ has no point-finite open expansion. 
\end{claim}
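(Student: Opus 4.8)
The plan is a proof by contradiction. Suppose $\{U_n:n\in\omega\}$ is a point-finite open expansion of the partition, so that $\acal_n\sub U_n$, each $U_n$ is open, and every point of $\Psi(\acal)$ lies in only finitely many $U_n$. The first step is to pass to the traces on the isolated points, $W_n=U_n\cap\omega_1\sub\omega_1$. Since a basic neighbourhood of $a\in\acal$ has the form $\{a\}\cup(a\sm F)$ with $F\sub a$ finite, the conditions ``$\acal_n\sub U_n$ and $U_n$ open'' translate precisely into: $a\sm W_n$ is finite for every $a\in\acal_n$. Point-finiteness restricted to the isolated points says each $x\in\omega_1$ belongs to only finitely many $W_n$, and this is the statement I aim to contradict.

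The crucial observation is that the $W_n$ are forced to be large \emph{simultaneously} against all $\cont$ of the sets $Y_\alpha$. Fix $\alpha<\cont$ and $n\in\omega$. Since $a_{\alpha,n}\in\acal_n$, the set $a_{\alpha,n}\sm W_n$ is finite, while by construction $a_{\alpha,n}\cap Y_\alpha$ is infinite; hence $W_n$ contains all but finitely many elements of the infinite set $a_{\alpha,n}\cap Y_\alpha$, so $W_n\cap Y_\alpha$ is infinite. Now I would invoke the key property of $\acal$: any uncountable $S\sub\omega_1$ contains a countable $Y\sub S$ whose closure in $\Psi(\acal)$ has size $\cont$, in particular uncountable closure, so $Y=Y_\alpha$ for some $\alpha$. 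Then $W_n\cap S\supseteq W_n\cap Y_\alpha$ is infinite. Thus every $W_n$ meets every uncountable subset of $\omega_1$ in an infinite set, which forces $\omega_1\sm W_n$ to be countable: otherwise $S=\omega_1\sm W_n$ would be an uncountable set disjoint from $W_n$.

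Once each $W_n$ is co-countable, the contradiction is immediate. Indeed $\omega_1\sm\bigcap_n W_n=\bigcup_n(\omega_1\sm W_n)$ is a countable union of countable sets, hence countable, so $\bigcap_n W_n$ is co-countable and in particular nonempty. Any $x\in\bigcap_n W_n$ satisfies $x\in W_n\sub U_n$ for every $n$, i.e.\ $x$ lies in infinitely many members of the expansion, contradicting point-finiteness. The step requiring the most care — and the reason the construction uses \emph{all} $\cont$ sets $Y_\alpha$ rather than one — is the passage to co-countability: against a single $Y_\alpha$ the conditions ``$W_n\cap Y_\alpha$ infinite for every $n$'' and ``$\{W_n\}$ point-finite'' are perfectly consistent (one may take the sets $W_n\cap Y_\alpha$ to be a partition of $Y_\alpha$ into infinitely many infinite pieces). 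So the argument genuinely relies on the key property, which upgrades the simultaneous constraints over all $\alpha<\cont$ into the statement that each $W_n$ is co-countable.
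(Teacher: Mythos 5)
Your proof is correct, and it closes the argument by a different route than the paper, although it rests on the same two ingredients: openness forces $a\sm U_n$ to be finite for each $a\in\acal_n$, and the diagonal choice of the $a_{\alpha,n}$ guarantees that every $Y_\alpha$ meets every $U_n$ infinitely. The paper's own proof goes through a pigeonhole step: it assigns to each isolated point $\gamma$ the maximal index $k_\gamma$ with $\gamma\in U_{k_\gamma}$, finds an uncountable $S\sub\omega_1$ with $k_\gamma$ constantly equal to some $k$, extracts a $Y_\alpha\sub S$ via the key property, and then contradicts $Y_\alpha\cap U_n=\emptyset$ for $n>k$ against the fact that $a_{\alpha,n}\in\acal_n\sub U_n$ meets $Y_\alpha$ infinitely. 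You avoid the pigeonhole entirely and instead run the key property against the complements: since every $Y_\alpha$ meets every trace $W_n=U_n\cap\omega_1$ infinitely, an uncountable $\omega_1\sm W_n$ would have to contain some $Y_\alpha$ disjoint from $W_n$, so each $W_n$ is co-countable, and hence $\bigcap_n W_n$ is co-countable and nonempty, contradicting point-finiteness. This is a genuine reorganization with a payoff: you prove the strictly stronger statement that \emph{any} open expansion of $\{\acal_n:n\in\omega\}$ has co-countable intersection on the isolated points, so point-finiteness fails at co-countably many points rather than being refuted by a single localized contradiction; the paper's version is the more economical one-shot derivation but yields no such quantitative information. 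Your closing sanity check --- that the constraints coming from a single $Y_\alpha$ are compatible with point-finiteness (partition $Y_\alpha$ into infinitely many infinite pieces), so the full enumeration over all $\alpha<\cont$ is genuinely needed --- is also accurate and mirrors the role the enumeration plays in the paper's argument.
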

\begin{proof} By way of contradiction, suppose that there is a point-finite open family $\{U_n: n\in \omega\}$ such that $U_n\supseteq \acal_n$.
 For each $\gamma\in \omega_1$ let $k_\gamma$ be the maximal index $k$ with $\gamma\in U_k$. Then there are an uncountable $S\subset\omega_1$ and $k\in\omega$ such that
 $k_\gamma=k$ for all $\gamma\in S$. 
And there is a countable $Y\subset S$ with uncountable closure in $\Psi(\acal)$. This $Y$ was enumerated as $Y_\alpha$. Observe that $Y_\alpha\cap U_n=\emptyset$ for all $n > k$,
by definition of $S$. However, by choice of the partition, for every $n$ there is $a\in \acal_n$ with $a\cap Y_\alpha$ infinite.
 For $n>k$ this contradicts that $\{U_n: n\in \omega\}$ is an open expansion of $\{\acal_n: n\in \omega\}$.
The obtained contradiction means that $\Psi(D,\acal)$ is not countably metacompact. 
\end{proof}
\end{example}

As an immediate consequence  we obtain the negative answer to open Problem 5.11 posed in \cite{KL}.

\begin{corollary}\label{cor5.9}
Denote by $X$ the one-point compactification of the locally compact space $\Psi(D,\acal)$ from the above Example \ref{example1}.
Then $X$ is a scattered compact space with the Cantor-Bendixson rank equal to 3, but $X \notin \Delta$.
\end{corollary}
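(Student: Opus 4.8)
The plan is to split the corollary into its two independent assertions---that $X \notin \Delta$ and that $X$ is scattered of Cantor--Bendixson rank $3$---since the first is essentially free from earlier results and the second is a direct computation of derived sets.

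For the failure of $\Delta$, I would first observe that $\Psi(D,\acal)$ is not countably metacompact by Example \ref{example1}, hence not a $\Delta$-space (either invoke Proposition \ref{prop_cmc}, or recall that every $\Delta$-space is hereditarily countably metacompact). Since $X$ is compact Hausdorff, $\{\infty\}$ is closed, so $X \setminus \{\infty\}$ is exactly $\Psi(D,\acal)$. As the class $\Delta$ is invariant under passing to arbitrary subspaces, $X \in \Delta$ would force $\Psi(D,\acal) \in \Delta$, a contradiction; therefore $X \notin \Delta$.

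The substance of the corollary is the rank computation, which I would carry out by determining the Cantor--Bendixson derivatives $X^{(k)}$ one at a time. The isolated points of $X$ are precisely the points of $D$, since their singleton neighborhoods in $\Psi(D,\acal)$ remain open in $X$, so $X^{(1)} = \acal \cup \{\infty\}$. Next I would check that in the subspace $\acal \cup \{\infty\}$ every $A \in \acal$ is isolated: the basic neighborhood $\{A\} \cup A$ is compact and open in $\Psi(D,\acal)$, meets $\acal$ only in $A$, and omits $\infty$. On the other hand, $\acal$ is an infinite closed discrete subset of $\Psi(D,\acal)$, hence not compact, so no neighborhood $X \setminus K$ of $\infty$ (with $K$ compact) can contain all of $\acal$; thus $\infty$ is a limit point of $\acal$ in $X$. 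Consequently $X^{(2)} = \{\infty\}$ and $X^{(3)} = \emptyset$, which gives Cantor--Bendixson rank $3$, and scatteredness is immediate from the derivatives terminating at $\emptyset$.

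The only point demanding care is the behaviour at $\infty$: one must use that the one-point compactification topology has neighborhoods of $\infty$ of the form $X \setminus K$ with $K \subseteq \Psi(D,\acal)$ compact, combined with the fact that $\acal$ is closed, discrete and infinite (so non-compact) in $\Psi(D,\acal)$. This is exactly what forces $\infty$ into the second derived set, and no earlier. Everything else is bookkeeping, and the $\Delta$-part carries no real difficulty beyond citing heredity of $\Delta$ and Example \ref{example1}.
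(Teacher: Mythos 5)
Your proposal is correct and follows the same route the paper intends: the corollary is stated there as an immediate consequence of Example \ref{example1}, using exactly your ingredients (Proposition \ref{prop_cmc}, or hereditary countable metacompactness of $\Delta$-spaces, plus invariance of the class $\Delta$ under subspaces for the negative part, and the standard observation that $\Psi(D,\acal)$ is scattered of Cantor--Bendixson rank $2$, so its one-point compactification has rank $3$, which you verify by the derived-set computation). One sentence needs repair, though: you wrote that no neighborhood $X\setminus K$ of $\infty$ can contain all of $\acal$, which is false (take $K$ a finite subset of $D$); what your non-compactness argument actually yields is that no compact $K\subseteq \Psi(D,\acal)$ contains all of $\acal$ (indeed $K\cap\acal$ is finite, since $\acal$ is closed discrete), hence every neighborhood $X\setminus K$ of $\infty$ \emph{meets} $\acal$, which is what places $\infty$ in the second derived set.
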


Below we reiterate D. Burke's question from \cite{Burke}: 

\begin{problem} \label{problem6}
Does there exist in ZFC a MAD family of countable subsets of $D$ with $|D|\geq \aleph_1$ such that
$\Psi(D,\acal)$ is a $\Delta$-space?
\end{problem}

We close this section with a $\Psi$-space example answering questions from \cite{Lei_Tkachenko} and \cite{Lei_Tkachuk}.
%Recall that a topological space $X$ has {\it countable tightness}, whenever the following property holds: If a point $x$ is in the closure $\cl{Y}$ of a set $Y \subset X$,
%then there is a countable $B \subset Y$ such that $x$ is in the closure of $B$.
%A topological space $X$ is called {\it Fr\'echet-Urysohn} if for every subset $Y\subset X$ and each $x\in\cl{Y}$ there exists a sequence $\{x_{n}: n\in \omega\}$ in $Y$ which converges to $x$.

It has been shown in \cite{KL2} that every compact $\Delta$-space has countable tightness, and assuming Proper Forcing Axiom (PFA) every countably compact $\Delta$-space is compact and so has countable tightness as well. Whether any countably compact $\Delta$-space has countable tightness in ZFC is an open problem.
 In addition, it has been shown in \cite{Lei_Tkachuk} that pseudocompact $\Delta$-spaces of countable tightness must be scattered.
%Recall that a Tychonoff space $X$ is {\it pseudocompact} if every continuous function $f: X \to \R$ is bounded.
This all obviously gives rise to the following natural problem posed in \cite{Lei_Tkachuk}: 

\begin{problem}\label{pseudocompact} \cite[Question 4.1]{Lei_Tkachuk}
Suppose that $X$ is a pseudocompact $\Delta$-space. Is it true that the tightness of $X$ is countable?
\end{problem}

It has been noted in \cite{Lei_Tkachenko} that for any MAD family $\mathcal{B}$ on $\omega$, the one-point
compactification of the Isbell-Mr\'owka space $\Psi(\N,\mathcal{B})$ is never Fr\'echet-Urysohn. 
Also, there exists an Isbell-Mr\'owka $\Psi$-space $X$ such that $t(p,X_p) = \aleph_0$  
 for every one-point extension $X_p = X \cup \{p\}$ of $X$, where $p \in \beta(X) \setminus X$ and $\beta(X)$
is the Stone-\v{C}ech compactification of $X$ \cite{Lei_Tkachenko}. 

\begin{problem} \label{IM} \cite[Problem 2.15]{Lei_Tkachenko}
 Does there exist an Isbell-Mr\'owka $\Psi$-space $X$ such that $t(p,X_p) > \aleph_0$, for some point $p \in \beta{X} \setminus X$?
\end{problem}  

Now we completely resolve Problems \ref{pseudocompact} and \ref{IM}.

\begin{theorem}\label{example:Dow} There is an Isbell-Mr\'owka $\Psi$-space $X$ with a point $p\in \beta{X} \setminus X$ such that
 $X_p$ is a pseudocompact $\Delta$-space and $t(p,X_p) > \aleph_0$.
\end{theorem}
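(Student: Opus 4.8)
Write $X=\Psi(\acal)=\N\cup\acal$ and $X_p=X\cup\{p\}\subseteq\beta X$. The plan is first to observe that pseudocompactness and membership in $\Delta$ come essentially for free, so that the whole difficulty concentrates in the point of uncountable tightness. If $\acal$ is chosen \emph{maximal}, then $X=\Psi(\acal)$ is pseudocompact (Mr\'owka), and since $X$ is dense in $X_p$, every continuous real-valued function on $X_p$ is already bounded on $X$; hence $X_p$ is pseudocompact for \emph{any} $p$. For the $\Delta$-property I would argue directly: by \thmref{Theor:height_2} the one-point compactification of $X$ is in $\Delta$, so $X\in\Delta$ by heredity of $\Delta$; now, given a decreasing sequence $\{D_n\}$ in $X_p$ with empty intersection, the set $\{n:p\in D_n\}$ is a finite initial segment $\{0,\dots,k-1\}$, so $D_n\subseteq X$ for $n\ge k$. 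Feeding the tail $\{D_n:n\ge k\}$ into the $\Delta$-property of $X$ (and using that $X$ is open in $X_p$) yields open sets $W_n$; putting $V_n=X_p$ for $n<k$ and $V_n=W_n$ for $n\ge k$ gives the required point-finite expansion. This is just the one-point instance of the fact, recalled in \secref{intro}, that adding a finite set preserves $\Delta$.

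\textbf{Reformulating tightness.} The remaining task is to build $\acal$ and a point $p\in\beta X\setminus X$ witnessing $t(p,X_p)>\aleph_0$ \emph{through the set $\acal$ itself}. The inclusion $\N\hookrightarrow X$ extends to a continuous surjection $\beta\iota:\beta\N\to\beta X$ that collapses each $A^{*}=\overline{A}^{\,\beta\N}\setminus A$ to the point $A\in\acal$ and identifies $\beta X\setminus X$ with $\beta\iota\big[\,\N^{*}\setminus\bigcup_{A\in\acal}A^{*}\,\big]$. Because $\acal$ is MAD, $\bigcup_{A}A^{*}$ is dense in $\N^{*}$, so $\overline{\acal}^{\,\beta X}\supseteq\beta X\setminus X$; thus $p\in\overline{\acal}^{\,\beta X}$ is automatic, and the only thing to arrange is $p\notin\overline{C}^{\,\beta X}$ for every countable $C\subseteq\acal$. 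I would secure each such non-membership by a clopen neighbourhood. Clopen subsets of $X$ correspond exactly to sets $S\subseteq\N$ that are \emph{$\acal$-clopen}, meaning $A\subseteq^{*}S$ or $|A\cap S|<\omega$ for every $A\in\acal$; for such $S$ the map $\chi_S$ is continuous, its extension to $\beta X$ takes value $1$ at $p$ exactly when $S\in u$ (for an ultrafilter $u$ with $\beta\iota(u)=p$), and it vanishes at every $A$ with $A\cap S$ finite. The goal therefore becomes purely combinatorial: construct a MAD family $\acal$ and a free ultrafilter $u$ on $\N$ such that no member of $\acal$ lies in $u$ (so $p=\beta\iota(u)\in\beta X\setminus X$), and for every countable $C\subseteq\acal$ there is an $\acal$-clopen $S\in u$ with $|A\cap S|<\omega$ for all $A\in C$.

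\textbf{The construction.} I would build $\acal=\{A_\xi:\xi<\cont\}$ together with a filter base for $u$ by a simultaneous transfinite recursion of length $\cont$, interleaving three families of demands: a \emph{maximality} demand catching each infinite $Y\subseteq\N$ (forcing some $A_\xi$ to meet it infinitely); a \emph{separation} demand that, for each countable subfamily $C$ appearing so far, throws an $\acal$-clopen set almost disjoint from $C$ into the filter base; and a \emph{properness} demand keeping the generated filter disjoint from the ideal generated by $\acal$, so that a final ultrafilter extending it avoids every member of $\acal$. Since a countable subfamily of $\acal$ has indices bounded below $\cont$ (as $\operatorname{cf}(\cont)>\omega$), the separating set committed at the relevant stage survives into $u$.

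\textbf{The main obstacle.} The hard part, as I see it, is the circular interaction between maximality and $\acal$-clopenness: the predicate ``$S$ is $\acal$-clopen'' refers to the \emph{entire} final family, so each $A_\xi$ adjoined to secure maximality must be chosen to respect the dichotomy $A_\xi\subseteq^{*}S$ or $|A_\xi\cap S|<\omega$ against all separating sets $S$ committed earlier, while still capturing the prescribed $Y$; one must check there is always enough room inside $Y$ to do both. The second difficulty is carrying the filter base through limit stages, where a pseudo-intersection of the previously chosen separating sets is needed — the point where a naive recursion collides with cardinal-characteristic constraints (tower-type bounds). The technical heart of the argument will thus be to organize the bookkeeping so that the separating sets stay $\subseteq^{*}$-coherent enough to admit pseudo-intersections and the recursion survives in ZFC; everything else in the theorem reduces to the routine verifications sketched above.
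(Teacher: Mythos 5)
Your first two sections are fine and agree with the paper: pseudocompactness from maximality plus density of $X$ in $X_p$, and the $\Delta$-property from the fact that adding one point to a $\Delta$-space preserves $\Delta$, are exactly how the paper disposes of the easy half. The reformulation of the tightness requirement through $\acal$-clopen sets and an ultrafilter $u$ with $\beta\iota(u)=p$ is also sound as far as it goes. But the proof has a genuine gap, and you have located it yourself: the transfinite recursion that is supposed to produce $\acal$ and $u$ is never carried out, and the two obstacles you name -- the circularity of the predicate ``$S$ is $\acal$-clopen'' against later maximality witnesses, and the need for pseudo-intersections of the committed separating sets at limit stages -- are precisely where a naive length-$\cont$ recursion dies in ZFC. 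Maintaining a $\subseteq^*$-coherent family of separating sets through $\cont$ many stages is a tower-type problem: since $\mathfrak{t}=\omega_1<\cont$ is consistent, there is no ZFC guarantee that the chain of separating sets survives past $\omega_1$, and each new $A_\xi$ adjoined for maximality must satisfy the clopenness dichotomy against what eventually becomes $\cont$ many committed sets, with only countably many ``decisions'' available inside $A_\xi$. Announcing that the bookkeeping ``will'' be organized so the recursion survives is not a proof; it is the entire theorem.

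The paper avoids this by not constructing anything: it quotes the ZFC construction of Dow and Vaughan \cite{Dow} of a MAD family ${\mathcal M}$ whose Stone--\v{C}ech remainder is homeomorphic to $\omega_1+1$, takes $p$ to be the top point $\omega_1$, and reads off uncountable tightness from the structure of that construction. Two structural differences are worth noting. First, the coherence problem is tamed because the separating clopen sets come from a $\subseteq^*$-increasing chain $\{T_\alpha:\alpha<\omega_1\}$ of length only $\omega_1$, with the case that $\{T_\alpha\}$ fails to be a tower handled by an extra block ${\mathcal C}_{\omega_1}$ of the MAD family -- exactly the limit-stage dichotomy your sketch leaves open. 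Second, your reformulation is strictly stronger than what the theorem needs: you demand $p\notin\cl{C}$ for \emph{every} countable $C\subseteq\acal$, i.e.\ that the full family $\acal$ witness uncountable tightness, whereas the paper's witness is the proper subset $Y=\bigcup_{\alpha<\omega_1}{\mathcal C}_\alpha$. In the non-tower case your stronger demand actually fails for the Dow--Vaughan family: any infinite countable $C\subseteq{\mathcal C}_{\omega_1}$ is closed discrete in $X$, its closure in $\beta X$ must meet the remainder, and the clopen closures of the $T_\alpha$ force that added point to be $\omega_1=p$. So even if your recursion could be pushed through, the target you set for it is harder than necessary and possibly unattainable in ZFC; the fix is to weaken the demand to a single witness set $Y$ chosen alongside the construction, which is in effect what \cite{Dow} provides.
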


\begin{proof} In \cite{Dow} a MAD family ${\mathcal M}\subseteq [\omega]^\omega$ is constructed with the property that the reminder of the 
Stone-\v{C}ech compactification $\beta\Psi({\mathcal M})\setminus \Psi({\mathcal M})$ is homeomorphic to the compact ordinals space $\omega_1+1$.

We declare $X = \Psi({\mathcal M})$ and choose $p\in \beta{X} \setminus X$ the point $\omega_1$. Then
$X_p = X \cup \{p\}$ is the desired space.
 To see this, first note that since ${\mathcal M}$ is maximal, $\Psi({\mathcal M})$ is pseudocompact.
 We conclude that $X_p$ is pseudocompact, because $X=\Psi({\mathcal M})$ is dense in $X_p$.
Moreover, $X_p$ is a $\Delta$-space, because adding a finite set does not destroy the $\Delta$-space property. 
In order to show that $X_p$ has uncountable tightness at the point $p$, we rely on some details of the construction.

For the following definitions we refer to \cite{vanDouwen}.
Define the quasi-order $\subseteq^*$ on $\mathscr{P}(\omega)$ by the rule:
 $A \subseteq^* B$ if $A \setminus B$ is finite.
We say that $A$ is a {\it pseudo-intersection} of a family $\mathscr{F}$ if $A \subseteq^* F$ for each $F \in \mathscr{F}$.
We call $\mathscr{T} \subseteq \left[\omega\right]^{\omega}$ a {\it tower} if $\mathscr{T}$ is well-ordered by $\supseteq^*$ and has no infinite pseudo-intersection.

 A MAD family ${\mathcal M}$ in \cite{Dow} is constructed as a union of families of sets ${\mathcal C}_\alpha$ along with a chain
$$
\{T_\alpha:\alpha<\omega_1\}\subseteq [\omega]^\omega
$$
that is $\subseteq^*$-increasing, and
 so that ${\mathcal M}=\bigcup\{{\mathcal C}_\alpha:\alpha\leq \omega_1\}$ has the properties 
\begin{enumerate}
\item For each $\alpha$, $C_\alpha$ is a non-empty family of infinite subsets of $\omega$ such that  ${\mathcal C}_\alpha\subseteq\{x\subseteq T_\alpha:x\cap T_\beta=^*\emptyset\,\,\text{ for all }\,\,\beta<\alpha\}$, and 
\item If $\{T_\alpha:\alpha<\omega_1\}$ is not a tower, then ${\mathcal C}_{\omega_1}\subseteq\{x:x\cap T_\beta=^*\emptyset\,\,\text{ for all }\,\,\beta<\omega_1\}$ is not empty, and if $\{T_\alpha:\alpha<\omega_1\}$ is a tower then ${\mathcal C}_{\omega_1}$ is empty.
\item The closure of $T_\alpha$ in $\beta\Psi({\mathcal M})$ is clopen and is equal to 
$$
T_\alpha\cup\left(\bigcup\{{\mathcal C}_\alpha: \beta\leq \alpha\}\right)\cup [0,\alpha]
$$
\end{enumerate}
Let us comment the item (3). Indeed, the closure of $T_\alpha$ is the set of $x\in {\mathcal M}$ such that $x\cap T_\alpha$ is infinite. 
And if $\gamma>\alpha$ then each $x\in C_{\gamma}$ has finite intersection with $T_\alpha$. In the opposite case, if $\gamma\leq \alpha$ 
then for each $x\in C_{\gamma}$ we have that $x \subseteq^* T_\alpha$. 

With this description, let us now define $Y=\bigcup_{\alpha<\omega_1} {\mathcal C}_\alpha$
 (by $(2)$, $Y$ may be all of ${\mathcal M}$ if ${\mathcal C}_{\omega_1}=\emptyset$).
 Then $[0,\omega_1)$ is contained in the closure of $Y$ in $\beta\Psi({\mathcal M})$ and so $\omega_1$ is in the closure of $Y$ in $X$.
 Moreover, any countable subset of $Y$ is a subset of $\bigcup_{\beta<\alpha} {\mathcal C}_\beta$ for some $\alpha<\omega_1$ and hence, by $(1)$ it is contained in the closure of $T_\alpha$. By $(3)$ the closure of $T_\alpha$ in $\beta\Psi({\mathcal M})$ does not include the point $\omega_1$. And so, it follows that $\omega_1$ is not in the closure of any countable subset of $Y$.
So, $X_p$ is a pseudocompact $\Delta$-space with uncountable tightness at the point $p=\omega_1$.
\end{proof}

%%%%%%%%%%%%%%%%%%%%%%
\section{Ladder systems}\label{Section4}
%%%%%%%%%%%%%%%%%%%
Let $L$ be a {\it ladder system} over a stationary subset of limit ordinals $D\subset \omega_1$. 
I.e. $L=\{s_\alpha:\alpha\in D\}$, where each $s_\alpha$ is an $\omega$-sequence in $\alpha$ cofinal in $\alpha$.
 Any such family is an almost disjoint family of countable subsets of $\omega_1$ and so we can form  the space $\Psi(D,\acal)$, where $\acal = L$.
Traditionally, in this case the resulting space is denoted in a bit different fashion.
Namely, if $L$ is a ladder system on a stationary set $S\subseteq Lim(\omega_1)$, we denote by $X_L$ the topological space 
$\omega_1\times \{0\} \cup S\times \{1\}$, where every point $(\alpha, 0)$ is isolated and for each $\alpha\in S$,
 a basic neighborhood of $(\alpha, 1)$ consists of $\{(\alpha, 1)\}$ along with a cofinite subset of $s_{\alpha}\times \{0\}$.

We ask the following natural question: under which conditions on the ladder system $L$, $X_L$ is a $\Delta$-space? 
This question, in relation to other covering and separation properties of $X_L$, actually was studied already in \cite{BE+}.
As a matter of fact, $X_L\in \Delta$  has been characterized by a certain type of uniformization property imposed on $L$.

It was S. Shelah who introduced the notion of a ladder system $L$ being uniformizable. 
One says that a ladder system is {\it $2$-uniformizable} if for any sequence of functions $\eta_\alpha:s_\alpha\rightarrow 2$ there is a uniformizing function $f:\omega_1\rightarrow 2$, 
meaning that $f\upharpoonright s_\alpha=^*\eta_\alpha$ (is equal for all but finitely many elements of $s_\alpha$) for all $\alpha\in D$.
While this notion was introduced in Shelah's work on Whitehead groups, it is straightforward to show that $X_L$ is normal if and only if every sequence of constant functions admits 
a uniformizing function. Further, $X_L$ is normal implies that $X_L$ is a countably metacompact
 (which in turn is characterized by an even weaker notion of uniformizability \cite[Section 1]{BE+}). 

$MA(\omega_1)$ implies the stronger property of being $\omega$-uniformizable, meaning that any sequence of functions  $\eta_\alpha:s_\alpha\rightarrow \omega$ can be uniformized.
 It is  also worth remarking that if a ladder system $L$ is $\omega$-uniformizable, then the associated space $X_L$ is $\sigma$-closed discrete, 
which of course implies that $X_L$ is a $\Delta$-space. 
 Indeed, if $L=\{s_\alpha:\alpha\in \lim(\omega_1)\}$ is any ladder system, then we can enumerate each $s_\alpha$ in increasing order as $\{s_\alpha(n):n\in\omega\}$.
 Let $\eta_\alpha:s_\alpha\rightarrow\omega$ be given by $\eta_\alpha(s_\alpha(n))=n$. If $f$ uniformizes the family $\{\eta_\alpha:\alpha\in \lim(\omega_1)\}$, then it follows that
the fibers $f^{-1}(n)$ are closed discrete for all $n\in\omega$ and therefore $X_L$ is a $\sigma$-closed discrete space, so they are even $Q$-spaces. 
Summarizing these facts, we note the following
 
\begin{remark} \label{remark:DS} 
$MA(\omega_1)$ implies that every ladder system space $X_L$ is a normal $\sigma$-closed discrete space (see \cite{BE+}, \cite {DS}),
 hence it is consistent that all $X_L$ are in the class $\Delta$.
\end{remark}

\begin{example} \label{example2}
In ZFC there is a ladder system $L$ on $\omega_1$ such that the corresponding space $X_L$ is countably metacompact \cite[Section 1]{BE+},
hence $X_L \in \Delta$. Indeed, any ladder system $L$ with the property that the
$n$-th element of each ladder $s_{\alpha}$ is of the form $\beta + n$ with $\beta$ a limit ordinal leads to 
a $\sigma$-closed discrete space $X_L$.
Therefore, the one-point compactification of the locally compact space $X_L$ provides a new example of 
a scattered compact $X \in \Delta$, while $X$ is not an Eberlein compact space.
\end{example} 

Now we present one of the main results of our paper. It shows that consistently there are $X_L \notin \Delta$.
 
\begin{theorem} \label{Theor:Paul_new}
In forcing extension of ZFC obtained by adding one Cohen real, there is a ladder system $L$ on $\omega_1$ whose corresponding space $X_L$ 
is not countably metacompact and hence $X_L \notin \Delta$.
\end{theorem}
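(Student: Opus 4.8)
The plan is to translate ``$X_L$ is not countably metacompact'' into a purely combinatorial failure and then to force that failure with the single Cohen real. First I would record the reduction. Since every subset of the set of limit points $S\times\{1\}$ is closed in $X_L$, for any finite-valued coloring $g\colon S\to\omega$ the sets $D_n=\{(\alpha,1):g(\alpha)\ge n\}$ form a decreasing sequence of closed sets with empty intersection. By the closed-set formulation of countable metacompactness, metacompactness gives a decreasing open expansion $\{V_n\}$ with empty intersection; restricting to the isolated points produces a decreasing sequence $W_n\subseteq\omega_1$ with $\bigcap_n W_n=\emptyset$ and $s_\alpha\subseteq^* W_n$ whenever $g(\alpha)\ge n$, and setting $h(\beta)=\max\{n:\beta\in W_n\}$ yields a finite-valued $h\colon\omega_1\to\omega$ such that for every $\alpha$ one has $h(\beta)\ge g(\alpha)$ for all but finitely many $\beta\in s_\alpha$. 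Thus it suffices to produce, in the Cohen extension, a ladder system $L$ and a finite-valued coloring $g$ admitting no such dominating $h$.

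Second, the construction. I would work over a ground model of CH, which is preserved by the countable forcing, so the example lives in a model of CH, consistent with the companion remark. Using CH I would fix a ground-model sequence $\langle x_\alpha:\alpha\in S\rangle$ of ``raw'' cofinal $\omega$-sequences together with distinct labels, and let the single Cohen real $c$ both select the actual ladders $s_\alpha$ from the $x_\alpha$ and define the coloring $g(\alpha)$ (for instance as the length of agreement of $c$ with the label of $\alpha$), arranged so that each $s_\alpha$ is genuinely cofinal in $\alpha$, $g$ is finite-valued, and each level $\{\alpha:g(\alpha)\ge k\}$ is uncountable. The essential point, and the reason the ladders must be read off the generic rather than taken from the ground model, is that the configuration has to be genuinely new: a ground-model ladder system with a ground-model coloring cannot work, since in a model where all $X_L\in\Delta$ every such configuration is dominated.

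Third, the contradiction, which is where the countability of Cohen forcing is used. Suppose some condition $p_0$ forces that $\dot h$ is a dominating function. For each $k$ let $E_k=\{\beta:p_0\not\Vdash \dot h(\beta)\ge k\}$; since $\dot h$ is forced finite-valued, $\bigcup_k E_k=\omega_1$, so some $E_{k^*}$ is uncountable in the ground model. For each $\beta\in E_{k^*}$ choose $q_\beta\le p_0$ forcing $\dot h(\beta)<k^*$; as the forcing is countable, one condition $q^*$ is chosen uncountably often, yielding an uncountable ground-model set $E'$ with $q^*\Vdash E'\subseteq\{\dot h<k^*\}$. I would then run a density argument below $q^*$: using genericity of $c$, route the ladder of some $\alpha$ with $g(\alpha)\ge k^*$ so that it meets the unbounded set $E'$ infinitely often, targeting $\alpha\in\lim(E')$, where $E'\cap\alpha$ is cofinal. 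For such $\alpha$, $q^*$ forces $s_\alpha\cap\{\dot h<k^*\}\supseteq s_\alpha\cap E'$ to be infinite while $g(\alpha)\ge k^*$, contradicting that $\dot h$ dominates $g$ along $s_\alpha$.

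The hard part will be the density/routing step together with designing the recipe $s_\alpha=F(x_\alpha,c)$: one must arrange that the single Cohen real can, below every condition, steer some ladder of each prescribed $g$-level infinitely into an arbitrary ground-model unbounded set $E'$, and do so simultaneously for all such $E'$ (handled by genericity over the ground model) while keeping every $s_\alpha$ a legitimate cofinal sequence. This is exactly the combinatorial heart: it is what the Cohen real buys and what a ground-model construction provably cannot, in view of the consistency of ``all ladder system spaces are in $\Delta$''. Verifying that the relevant sets of conditions are dense, and that the finitely many exceptional elements in each ladder can be absorbed, is the step I expect to be most delicate.
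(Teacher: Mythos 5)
Your overall architecture matches the paper's proof: reduce the failure of countable metacompactness to a combinatorial statement about dominating the ladders, read the ladder system off the Cohen real via ground-model bookkeeping, stabilize the names by pigeonhole over the countable forcing, and then steer a ladder through the stabilized uncountable set. Your reduction to the non-existence of a finite-valued dominating $h$ is correct, and the stabilization producing $q^*$ and the uncountable ground-model set $E'$ is exactly the paper's first move (there, a single $q$ and a single finite trace $F$ on an uncountable $A$ of isolated points). However, there is a genuine gap in your final step: requiring each level $\{\alpha : g(\alpha)\ge k\}$ to be merely \emph{uncountable} is too weak. You must find $\alpha\in\lim(E')$ — and the set of limit points of $E'$ is a club depending on an $E'$ you do not control — at which $g(\alpha)\ge k^*$ can still be forced below $q^*$. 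For this the relevant level sets must be \emph{stationary}, so that they meet the club $\lim(E')$ for every possible outcome of the stabilization; with your ``length of agreement with labels'' recipe, the set of $\alpha$ at which $g(\alpha)\ge k^*$ remains forceable below $q^*$ need not be stationary, and an uncountable non-stationary level can miss $\lim(E')$ entirely. The paper avoids this by taking the coloring in the \emph{ground model}: a partition of $\mathrm{Lim}(\omega_1)$ into countably many pairwise disjoint stationary sets $S_n$, with Fodor's lemma available for the stabilizations. This also shows that your heuristic claim that ``a ground-model coloring cannot work'' is mistaken — only the ladders need to be generic. (The CH hypothesis on the ground model is likewise unnecessary.)

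The second, larger issue is that the recipe $s_\alpha=F(x_\alpha,c)$ together with the routing-density verification — which you explicitly defer as ``the hard part'' — is precisely the content of the paper's proof. The paper fixes, in the ground model, bijections $e_\alpha:\alpha\rightarrow\omega$ and increasing cofinal sequences $\alpha(n)$, and declares the level-$n$ point of $s^c_\alpha$ to be the $e_\alpha$-minimal $\beta\in[\alpha(n),\alpha(n+1))$ with $c(e_\alpha(\beta))>n$. The steering step then requires exactly the two precautions you anticipated but did not supply: the routing level $k$ is chosen so that every coordinate in $\dom(p)$ corresponds under $e_\beta$ to an ordinal below $\beta(k)$ (freshness), \emph{and} so that $k$ bounds the range of $p$ (so already-decided values can neither create nor block level-$k$ points); one then sets $p'(e_\beta(\xi))\le k$ for the finitely many competitors $\xi$ with smaller $e_\beta$-value and $p'(e_\beta(\eta))>k$ to route $\eta$ into $s^c_\beta$. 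Until such a coding and its density verification are given, your proposal is a correct plan rather than a proof. One genuine point in your favor: once the paper's coding is plugged in, your domination formulation — demanding infinitely many $\beta\in s_\alpha\cap E'$ with $\dot h(\beta)<k^*$ — would let you dispense with the paper's Fodor-stabilization of the tail bounds $\gamma_\beta$, since infinitely many violations defeat any cofinite tail; the paper instead needs a single steered point $\eta$ landing above a uniformized $\gamma$, which is why it presses down.
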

\begin{proof}
 First we describe how to construct a ladder system coded by a single prescribed function $g:\omega\rightarrow \omega$. 
For each limit countable ordinal $\alpha$ fix a bijection $e_\alpha:\alpha\rightarrow \omega$ 
and choose arbitrarily an increasing sequence $$\alpha(0)<...<\alpha(n)<...<\alpha \text{ cofinal in }\alpha.$$

For each limit countable ordinal $\alpha$ we define $s^g_\alpha=\{\beta^g_n:n\in \omega\} \subset \omega_1$ as follows.
Let $n \in \omega$.  Consider the set of all $\beta\in [\alpha(n),\alpha(n+1))$ such that $g(e_\alpha(\beta)) > n$. 

If this set is not empty, we declare that $\beta^g_n$ is the unique such $\beta$ with $e_\alpha(\beta)$ minimal. Otherwise we decide $\beta^g_n=0$. 
Put $S_g=\{\alpha\in Lim(\omega_1): s^g_\alpha\text{ is cofinal in }\alpha\}$. Then $S_g$ is a stationary set and $L_g=\{s^g_\alpha:\alpha\in S_g\}$ is a ladder system on $S_g$. 

Now we keep $\{e_\alpha:\alpha\in Lim(\omega_1)\}$ and sequences $\alpha(n)$ for each $\alpha\in Lim(\omega_1)$ fixed in the ground model $V$ and force with ${\mathbb P}=\text{Fn}(\omega,\omega)$.
 Let $G$ be ${\mathbb P}$ generic over $V$ and let $c=\bigcup G:\omega\rightarrow \omega$ be the generic function. 
The proof of the following claim is based on a standard density argument and only uses that for any infinite $A\subseteq\omega$ in the ground model, the image $c(A)$ is infinite. 

\begin{lemma} $S_c=\{\alpha\in Lim(\omega_1): s^c_\alpha\text{ is cofinal in }\alpha\}=Lim(\omega_1)$. 
\end{lemma}
%\hfill$\square$
%\vskip 6pt

Our aim is to prove that $\Vdash X_{L_c}$ is not countably metacompact. On the contrary, assume that  $X_{L_c}$ is countably metacompact.
Choose in $V$ any partition $Lim(\omega_1)=\bigcup_{n\in\omega} S_n$ into pairwise disjoint stationary sets.
Form the corresponding disjoint family consisting of closed sets $\{S_n\times\{1\}: n\in\omega\}$.
Fix names $U_n$ for the open sets expanding $S_n\times\{1\}$ in the extension. It suffices to assume that $$\Vdash\{U_n:n\in \omega\}\text{ is point-finite }$$ and obtain a contradiction. 

Assuming that this open expansion is forced to be point-finite, we may fix for each ordinal $\alpha\in \omega_1$, an element $q_\alpha\in {\mathbb P}$ and a finite set $F_\alpha\subseteq \omega$ 
such that  $$q_\alpha\Vdash ``(\alpha,0)\in U_n \text{ iff } n\in F_\alpha"$$

By the pigeon hole principle, there is an uncountable $A\subseteq \omega_1$,  an element $q\in P$ and a finite $F\subset \omega$ such that $F_\alpha=F$ and $q_\alpha=q$ for all $\alpha\in A$. 

Pick $m\not\in F$ and consider the stationary set $S_m$. 
In the extension, we have that for each $\beta\in S_m$ there is a $\gamma<\beta$ such that $(s^c_\beta\setminus \gamma)\times\{0\}\subseteq U_m$. So, for each $\beta\in S_m$ we may extend $q$ to $p_\beta$ and fix $\gamma_\beta$ such that 
$$
p_\beta\Vdash (s^c_\beta\setminus \gamma_\beta)\times\{0\}\subseteq U_m
$$
And again by the pigeon hole principle, and the Fodor's lemma, we may fix a single $\gamma$ and a single $p\leq q$ and a stationary $S'_m\subseteq S_m$ such that for every $\beta\in S'_m$
$$
p\Vdash (s^c_\beta\setminus \gamma)\times\{0\}\subseteq U_m
$$

Choose $\beta\in S'_m$ such that $A\cap \beta$ is unbounded in $\beta$ and
choose $k\in\omega$ large enough so that 
\begin{enumerate}
\item $\dom(p)\subseteq e_\beta((0,\beta(k)])$,
\item $k$ above is the maximum of the range of $p$,
\item $A\cap [\beta(k),\beta(k+1))\not=\emptyset$, and
\item $\beta(k)>\gamma$. 
\end{enumerate}
Fix $\eta\in A\cap [\beta(k),\beta(k+1))$ and denote $E=\{\xi\in [\beta(k),\beta(k+1)):e_\beta(\xi)<e_\beta(\eta)\}$. Then $E$ is finite and $e_\beta(E\cup\{\eta\})$ is disjoint from the domain of $p$ (by (1)). So we may extend $p$ to $p'$ defining $p'(e_\beta(\xi))<k$ for all $\xi\in E$ and defining $p'(e_\beta(\eta))> k$. Then, by definition of $s^c_\beta$ we have that $\eta\in s^c_\beta$. And $\eta>\gamma$ (by (4)). Putting this all together we have that 
$$
p'\Vdash \eta\in U_m
$$
However, by definition of $A$, we also have that $p\Vdash \eta\not\in U_m$ since $m\not\in F$.
Finally, in view of $p'\leq p$ this is a contradiction completing the proof. 
%\hfill$\square$
\end{proof}

It is natural to investigate whether the class $\Delta$ is invariant under the basic topological operations.
An account about the progress in this direction can be found in \cite{KL2}, \cite{Lei_Tkachuk}.
In particular, it has been shown that the product of a $\Delta$-space with a $\sigma$-closed discrete space
is a $\Delta$-space \cite[Corollary 2.9]{KL2}. But the general question whether the class $\Delta$ is invariant under finite products remains open even for the compact factors \cite[Problem 5.10]{KL}.

One may ask whether ladder system spaces, or more generally, the spaces $\Psi(D,\acal)$ provide counterexamples to this question.
\begin{proposition}\label{prop:prod} Let $Y$ be any $\Delta$-space. Then
\begin{enumerate}
\item[{\rm (i)}] The product $\Psi(\N,\acal) \times Y$ is a $\Delta$-space.
\item[{\rm (ii)}] Assume $MA(\omega_1)$. Then the product $X_L \times Y$ is a $\Delta$-space for every ladder system $L$ on $\omega_1$.
\end{enumerate}
\end{proposition}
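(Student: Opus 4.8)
The plan is to deduce both parts from the single fact, already recorded just before the proposition, that the product of a $\Delta$-space with a $\sigma$-closed discrete space is again a $\Delta$-space \cite[Corollary 2.9]{KL2}. Thus it suffices to show that the first factor is $\sigma$-closed discrete in each case: $\Psi(\N,\acal)$ in (i), and (under $MA(\omega_1)$) $X_L$ in (ii). Once this is established, taking the product with the given $\Delta$-space $Y$ and quoting that corollary finishes each part, so the whole proposition reduces to two $\sigma$-closed discrete decompositions.

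For (i) I would verify directly that $\Psi(\N,\acal)$ is $\sigma$-closed discrete for every almost disjoint family $\acal$ on $\N$. The set $\acal$ is closed in $\Psi(\N,\acal)$ since its complement $\N$ is open, and it is discrete in the subspace topology because the basic neighborhood $\{A\}\cup(A\setminus F)$ of a point $A\in\acal$ meets $\acal$ only in $A$; hence $\acal$ is a single closed discrete subspace. As $\N$ is countable and each singleton $\{n\}$ is closed, we have $\N=\bigcup_{n\in\N}\{n\}$, a countable union of closed discrete subspaces. Therefore $\Psi(\N,\acal)=\acal\cup\bigcup_{n\in\N}\{n\}$ exhibits $\Psi(\N,\acal)$ as a countable union of closed discrete subspaces, i.e. it is $\sigma$-closed discrete (in particular a $Q$-space). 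I would stress that countability of the ground set is essential here: for $D=\omega_1$ with $\acal$ MAD, no infinite subset of the isolated points is closed, which is precisely why $\Psi(\omega_1,\acal)$ can fail to be a $\Delta$-space (Theorems \ref{Theor:Dennis1} and \ref{Theor:Paul1}).

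For (ii), I would note that under $MA(\omega_1)$ the factor $X_L$ is $\sigma$-closed discrete, which is exactly the content of Remark~\ref{remark:DS}: $MA(\omega_1)$ makes every ladder system $\omega$-uniformizable, an $\omega$-uniformizing function $f$ for the increasing enumerations of the ladders splits the isolated points $\omega_1\times\{0\}$ into the countably many closed discrete fibers $f^{-1}(n)\times\{0\}$, and $S\times\{1\}$ is itself closed discrete. Applying \cite[Corollary 2.9]{KL2} to $X_L\times Y$ then yields the claim for every ladder system $L$ on $\omega_1$. Since in both parts the relevant factor is reduced to a $\sigma$-closed discrete space, no further work is required; the only genuine content is the decomposition in (i), and the main (and rather mild) obstacle there is checking that $\acal$ sits inside $\Psi(\N,\acal)$ as a closed discrete subspace, with the countability of $\N$ supplying the remaining countably many closed discrete pieces.
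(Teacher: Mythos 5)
Your proof is correct and takes essentially the same approach as the paper: its entire proof is the one-line observation that both $\Psi(\N,\acal)$ and (under $MA(\omega_1)$) $X_L$ are $\sigma$-closed discrete, combined with the fact from \cite[Corollary 2.9]{KL2} quoted just before the proposition. You simply supply the routine verifications of $\sigma$-closed discreteness that the paper leaves implicit (the argument for $X_L$ being exactly the one recorded in Remark~\ref{remark:DS}).
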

\begin{proof} Both $\Psi(\N,\acal)$ and $X_L$ are $\sigma$-closed discrete spaces.
\end{proof}

Similarly, we observe that all known examples of compact $\Delta$-spaces can be seen to be countable unions of scattered Eberlein compact spaces. 
Since the product of two scattered Eberlein compact spaces is a scattered Eberlein compact space,
the finite products for all compact $\Delta$-spaces we are aware of remain to be $\Delta$-spaces.
This motivates us to reiterate the following open problem.

\begin{problem}\label{prob_countun} \cite[Question 4.6]{Lei_Tkachuk}
Is it true  that  every  compact  $\Delta$-space is the countable union of Eberlein compact spaces? 
\end{problem}

Recall that for the ladder system space $X_L$, normality of $X_L$ implies that $X_L\in\Delta$.
What can be said regarding the normality property of the product $X_{L_1}\times X_{L_2}$ under $MA(\omega_1)$?

\begin{theorem} \label{Th:prod} Assume $MA(\omega_1)$. Then, for any pair of ladder systems, $L_1$ and $L_2$, the product $X_{L_1}\times X_{L_2}$ is hereditarily normal. 
\end{theorem}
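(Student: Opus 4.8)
The plan is to split the argument into a soft structural reduction, which turns hereditary normality into plain normality, and a hard combinatorial core, which establishes normality using $MA(\omega_1)$.

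\emph{Reduction to normality.} By Remark~\ref{remark:DS}, under $MA(\omega_1)$ each factor $X_{L_i}$ is $\sigma$-closed discrete: its set of non-isolated points is a closed discrete set, and the $\omega$-uniformizing function of the ladder system partitions the isolated points into countably many closed discrete fibers. I would first observe that the product is then $\sigma$-closed discrete as well. The product of a closed discrete subset of $X_{L_1}$ with a closed discrete subset of $X_{L_2}$ is closed discrete in $X_{L_1}\times X_{L_2}$ (a product of closed sets is closed, and a product of discrete subspaces is discrete), so taking products of the countably many pieces on each side exhibits $X_{L_1}\times X_{L_2}$ as a countable union of closed discrete subspaces. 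Every subset $E$ of such a space is then $F_\sigma$, being the union of the sets $E\cap D_n$, each of which is closed (a subset of a closed discrete set is closed). Hence the product is perfect. Since a perfect normal space is perfectly normal and perfect normality is hereditary, it suffices to prove that $X_{L_1}\times X_{L_2}$ is \emph{normal}.

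\emph{Locating the difficulty.} To separate two disjoint closed sets $H,K$ I would classify the non-isolated points of the product by the number of non-isolated coordinates. A point with exactly one non-isolated coordinate has neighbourhoods of the form (a tail of a single ladder)$\,\times\,$(one isolated point), i.e.\ a single ``line'' of isolated points; a point $((\alpha,1),(\beta,1))$ with both coordinates non-isolated has as neighbourhoods the ``grids'' $(s_\alpha\setminus F)\times(s_\beta\setminus F')$ of isolated points. The isolated points of the product may be distributed freely, so the entire task is to assign, to each non-isolated point of $H$ (resp.\ $K$), a basic neighbourhood placed into the open set $U\supseteq H$ (resp.\ $V\supseteq K$), so that $U\cap V=\emptyset$; closedness of $H$ and $K$ guarantees that each non-isolated point is already separated from the opposite set by a tail of one of its neighbourhoods, so that controlling $U\cap V$ automatically keeps each closure off the opposite side. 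The only genuine obstruction is that, a priori, infinitely many neighbourhoods thrown into $U$ could accumulate at a point of $K$.

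\emph{The combinatorial core.} The key is almost disjointness of the ladders: given two of the neighbourhoods above coming from distinct non-isolated points, in at least one of the two coordinate directions they share only finitely many common rows or columns, so truncating past this finite set makes them disjoint. Thus separation reduces to choosing a single truncation level for each neighbourhood so that the truncated neighbourhoods on the $H$-side are disjoint from those on the $K$-side: each individual opposite pair is easily reconciled, but one truncation must defeat all $\aleph_1$ opposite neighbourhoods at once. I would solve this transversal problem with $MA(\omega_1)$. Let $\mathbb{P}$ be the poset whose conditions are finite partial assignments of truncation levels together with a finite consistent decision on the isolated points, ordered by extension; for each non-isolated point of $H\cup K$ the set of conditions deciding that point's truncation is dense, giving $\aleph_1$ dense sets, and a filter meeting all of them reads off disjoint open sets $U\supseteq H$, $V\supseteq K$. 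The main obstacle, and the only place where a real estimate is needed, is to verify that $\mathbb{P}$ is ccc. I expect this to follow from a $\Delta$-system argument: refining an uncountable family of conditions to one with a common root and isomorphic stems, the only possible incompatibilities are confined to the finitely many isolated points where two ladder-tails overlap, which can be reconciled, so any two such conditions are compatible.
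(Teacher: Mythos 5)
Your opening reduction is correct and is genuinely different from the paper's route: under $MA(\omega_1)$ both factors are $\sigma$-closed discrete, the product is then $\sigma$-closed discrete, hence every subset is $F_\sigma$, and a normal perfect space is perfectly normal, hence hereditarily normal. The paper instead proves hereditary normality directly, separating arbitrary sets $A,B$ with $\cl{A}\cap B=A\cap\cl{B}=\emptyset$ via three lemmas (a partition of the top level $L_1\times L_2$; separating $L_1\times\omega_1$ from $\omega_1\times L_2$; separating a subset of $L_1\times\omega_1$ from one of $L_1\times L_2$) and a gluing argument. Your reduction, if the rest were complete, would buy a cleaner endgame: one only has to separate disjoint closed sets.

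The genuine gap is in the combinatorial core: your verification that the separation poset $\mathbb{P}$ is ccc does not work as described. You claim that after a $\Delta$-system refinement with isomorphic stems, ``the only possible incompatibilities are confined to the finitely many isolated points where two ladder-tails overlap, which can be reconciled.'' But compatibility of two conditions is a fixed fact about their already-chosen truncation levels --- nothing can be reconciled after the fact --- so the thinning must guarantee outright that any two surviving conditions are compatible, i.e.\ that all cross-overlaps already lie below the chosen truncations. A $\Delta$-system on the finite domains gives no control over this: a ladder $s_\delta$ with $\delta$ above the index of one condition is cofinal in $\delta$, so its assigned tail can dip down and hit isolated points inside a neighbourhood assigned by the other condition, no matter how the roots are arranged. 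This is exactly why the paper's proof of its Lemma~\ref{L1} presses down twice with Fodor's lemma (first to bound the part of each condition's domain below its index, then to bound $\alpha\cap\bigcup U_{p_\alpha(\delta,\gamma)}(x_\delta)$ for $\delta>\alpha$ below a single $\xi$), demands an isomorphism of conditions that includes \emph{equality of ladder traces} ($x_\gamma\cap\alpha=x_{\gamma'}\cap\beta$), and then applies Fodor once more to freeze the finite traces $x_{\beta^+}\cap\beta=F$, $y_{\beta^+}\cap\beta=G$ of the ``next'' conditions. Moreover, almost disjointness --- your only stated mechanism --- is useless for pairs of points sharing a coordinate ladder (e.g.\ $(x_\alpha,y_\beta)$ on one side and $(x_\alpha,y_{\beta'})$ on the other), where separation must come from the other coordinate; the paper's isomorphism clauses for the cases $\gamma=\alpha$, $\delta=\alpha$ exist precisely to handle this, and your sketch does not address it. Since verifying ccc is the entire content of the theorem under $MA(\omega_1)$, the proposal as written has a hole exactly where the work lies.
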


\begin{proof} To fix our notation, let $L_1=\{x_\alpha:\alpha\in S\}$  and
$L_2=\{y_\alpha:\alpha\in T\}$ be two ladder systems on stationary sets $S, T\subseteq \text{Lim}(\omega_1)$, respectively.
Note that $X_{L_1}\times X_{L_2}$ is scattered of the height 3 with the closed discrete set $L_1\times L_2$ on the top level in the Cantor-Bendixson decomposition. 

We first prove 
\begin{lemma}\label{L1} Assume $MA(\omega_1)$. Then for any partition $H: L_1\times L_2\rightarrow 2$, the sets $H^{-1}(0)$ and $H^{-1}(1)$ can be separated by disjoint open sets. 
\end{lemma}

\begin{proof} To this end, we take the natural partially ordered set ${\mathbb P}$ of finite partial neighborhood assignments to the points of $L_1\times L_2$ respecting the partition. Namely,

${\mathbb P}$ is the set of $p\in Fn(S\times T,\omega)$ such that $$U_{p(\alpha,\beta)}(x_\alpha)\times U_{p(\alpha,\beta)}(y_\beta)\cap U_{p(\delta,\gamma)}(x_\delta)\times U_{p(\delta,\gamma)}(y_\gamma)=\emptyset,$$ whenever $(\alpha,\beta),(\delta,\gamma)\in \dom(p)\text{ and }H(\alpha,\beta)\not=H(\delta,\gamma)$.

It suffices to show that ${\mathbb P}$ has the ccc. We will show, in fact, that ${\mathbb P}$ has Property K. 
To this end, fix $\{p_\alpha:\alpha\in \omega_1\}\subseteq {\mathbb P}$ and for each $\alpha$, let 
$$
h(\alpha)=\text{max}\{\eta<\alpha: \eta \text{ appears in the domain of }p_\alpha\}
$$
By the Fodor's lemma, there is an $\xi^\prime$ and a stationary $A$ such that $h(\alpha)<\xi$ for all $\alpha\in A$.

Next define $g:A\rightarrow \omega_1$ to be the maximum of the following two maxima 
$$\text{max}\left[\alpha\cap \bigcup \{U_{p_\alpha(\delta,\gamma)}(x_\delta) : (\delta,\gamma)\in \dom(p_\alpha) \delta>\alpha\}\right] $$ and 
$$\text{max}\left[\alpha\cap \bigcup \{U_{p_\alpha(\delta,\gamma)}(y_\gamma) : (\delta,\gamma)\in \dom(p_\alpha) \gamma>\alpha\}\right]
$$
Again by the Fodor's lemma, there is a $\xi>\xi^\prime$ and a stationary $B$ such that $g(\alpha)<\xi$ for all $\alpha\in B$. 

By the pigeon hole principle, we may assume that for all $\alpha$ and $\beta$ in $B$, $p_\alpha$ and $p_\beta$ are isomorphic conditions in the following sense: For each $\alpha$ and $\beta$ in $B$, there is a bijection $j:\dom(p_\alpha)\rightarrow \dom(p_\beta)$ such that 
\begin{enumerate}
\item $j$ is an order isomorphism with respect to the lexicographic ordering on $\omega_1\times\omega_1$, 
\item For all $(\gamma,\delta)\in \dom(p_\alpha)$, $H((\gamma,\delta))=H(j(\gamma,\delta))$ and $p_\alpha(\gamma,\delta))=p_\alpha(j(\gamma,\delta))$
\item Moreover, if $j(\gamma, \delta)=(\gamma',\delta')$ then 
\begin{enumerate}
\item $\gamma<\alpha$ iff $\gamma'<\beta$ in which case both are less than $\xi$ and we require $\gamma=\gamma'$, and $\delta<\alpha$ iff $\delta'<\beta$ in which case both are also less than $\xi$ and we also require $\delta=\delta'$
\item $\gamma=\alpha$ iff $\gamma'=\beta$ and $\delta=\alpha$ iff $\delta'=\beta$ 
\item For $\gamma>\alpha$ then $x_\gamma \cap \alpha = x_{\gamma'}\cap \beta$ and for $\delta>\alpha$ then $y_\delta \cap \alpha = y_{\delta'}\cap \beta$ (note: in this clause if $\gamma>\alpha$ then $x_\gamma\cap \alpha=x_\gamma\cap \xi$ and similarly for $\gamma', \delta, \delta'$).
\end{enumerate}
\end{enumerate}

 If $\beta\in B$, let $\beta^+$ denote the minimum of $B$ above $\beta$. Finally, we apply the Fodor's lemma and pigeon hole principle once again to $B$. Take $C\subset B$ stationary and $F, G$ finite so that for all $\beta\in C$, if $\beta^+\in S$ then $x_{\beta^+} \cap \beta=F$ and if $\beta^+\in T$ then $y_{\beta^+}\cap \beta = G$ (for $\beta\in C$, $\beta^+$ still denotes the minimum of $B$ above $\beta$). 
 
Now, for any $\alpha,\beta\in C$ we have that $p_{\alpha^+}$ and $p_{\beta^+}$ are compatible. 
\end{proof}

%Note that at this stage, Lemma \ref{L1} is already enough to prove that $X_{L_1}\times X_{L_2}$ is a $\Delta$-space (see the corollary and its proof below). 

Next, we show 
\begin{lemma}\label{L2} $MA(\omega_1)$ implies that $L_1\times \omega_1$ and $\omega_1\times L_2$ can be separated by disjoint open sets. 
\end{lemma} 

\begin{proof} To this end, we define another partial ordered set ${\mathbb Q}$ consisting of pairs $(p,q)$ such that $p\in Fn(S\times\omega_1,\omega)$ and $q\in Fn(\omega_1\times T,\omega)$ such that for all $(\alpha,\gamma)\in \dom(p)$ and all $(\delta,\beta)\in \dom(q)$, the open sets $U_{p(\alpha,\gamma)}(x_\alpha)\times \{\gamma\}$ and $\{\delta\}\times U_{q(\delta,\beta)}(y_\beta)$ are disjoint.

Note that since any finite union of basic open sets is clopen, for any $(\alpha,\gamma)\in S\times \omega_1$, the set of $(p,q)\in {\mathbb Q}$ such that $(\alpha,\gamma)\in \dom(p)$ is dense. Similarly, for any $(\delta,\beta)\in \omega_1\times T$, the set of $(p,q)\in {\mathbb Q}$ such that $(\delta,\beta)\in \dom(q)$ is also dense. And any filter generic with respect to 
these dense sets defines a pair of open sets separating as required. So, it suffices to show that ${\mathbb Q}$ is CCC. 

To this end, fix $\{(p_\xi,q_\xi):\xi\in \omega_1\}\subseteq{\mathbb Q}$ and we will find an uncountable subset consisting of pairwise compatible elements. 

First, as we did before, we may thin out our sequence using the Fodor's lemma and the pigeon hole principle to obtain a stationary set $A$ and an $\eta\in \omega_1$ so that the subset $\{(p_\xi,q_\xi):\xi\in A\}$ consists of pairwise isomorphic conditions in the following sense: 

For all $\xi,\chi\in A$ there are bijections $h:\dom(p_\xi)\rightarrow \dom(p_\chi)$ and $g:\dom(q_\xi)\rightarrow \dom(q_\chi)$. Moreover, for every
 $(\alpha,\delta)\in \dom(p_\xi)$, if we denote $h(\alpha,\delta)=(\alpha', \delta')$ then 
\begin{enumerate}
\item $p_\xi((\alpha,\delta)=p_\chi((\alpha',\delta'))$
\item $\alpha<\xi$ iff $\alpha'<\chi$ and in this case $\alpha=\alpha'<\eta$
\item $\delta<\xi$ iff $\delta'<\chi$ and in this case $\delta=\delta'<\eta$
\item $\alpha=\xi$ iff $\alpha'=\chi$ and $\delta=\xi$ iff $\delta'=\chi$. 
\item In the case that $\alpha>\xi $ (and so also $\alpha'>\chi$) we then have $x_\alpha\cap \xi=x_{\alpha'}\cap \chi \subseteq \eta$. 
\end{enumerate}
And, for every $(\gamma,\beta)\in \dom(q_\xi)$, if we denote $g(\gamma,\beta)=(\gamma', \beta')$ then 
\begin{enumerate}
\item $q_\xi((\gamma,\beta)=q_\chi((\gamma',\beta'))$
\item $\beta<\xi$ iff $\beta'<\chi$ and in this case $\beta=\beta'<\eta$
\item $\gamma<\xi$ iff $\gamma'<\chi$ and in this case $\gamma=\gamma'<\eta$
\item $\beta=\xi$ iff $\beta'=\chi$ and $\gamma=\xi$ iff $\gamma'=\chi$. 
\item In the case that $\beta>\xi $ (and so also $\beta'>\chi$) we then have $y_\beta\cap \xi=y_{\beta'}\cap \chi \subseteq \eta$. 
\end{enumerate}
Now we do one final thinning out. For each $\nu\in A$, let $\nu^+$ denote the minimum of $A\setminus (\nu+1)$. Applying the Fodor's lemma one last time, we may fix $B\subseteq A$ stationary, $\eta'\geq \eta$ and $F,G\subseteq \eta'$ finite such that for any $\nu\in B$, if $\nu^+\in S$ (where $\nu^+$ still denotes the minimum of $A$ above $\nu$), then $x_{\nu^+}\cap \nu =F$ (and so $x_{\nu^+}\cap [\eta',\nu)=\emptyset$), and if $\nu^+\in T$ then $y_{\nu^+}\cap \nu =G$ (and so $y_{\nu^+}\cap [\eta',\nu)=\emptyset$). Now by going to a subsequence we may assume that  

\begin{enumerate}
\item[$(*)$] for $\nu<\mu$ in $B$, we have that $\dom(p_{\nu^+})\cup \dom(q_{\nu^+}) \subseteq \mu\times\mu$. 
\end{enumerate}

We now have the following claim, which completes the proof that ${\mathbb Q}$ has the CCC:

%\vskip 6pt

\begin{claim} For all $\nu, \mu\in B$, $(p_{\nu^+},q_{\nu^+})$ is compatible with $(p_{\mu^+},q_{\mu^+})$
\end{claim}
\begin{proof} Suppose not and that $(p_{\nu^+},q_{\nu^+})$ is incompatible with $(p_{\mu^+},q_{\mu^+})$ where $\nu<\mu$.
 Then by symmetry, we may assume that there is a $(\alpha,\delta)\in \dom(p_{\nu^+})\setminus \dom(p_{\mu^+})$ and 
a $(\gamma,\beta)\in \dom(q_{\mu^+})\setminus \dom(q_{\nu^+})$ witnessing incompatibility. This means that
$$
\gamma\in U_{p_{\nu^+}(\alpha,\delta)}(x_\alpha)\text{ and }\delta\in  U_{q_{\mu^+}(\gamma,\beta)}(y_\beta)
$$
Since $(\alpha,\delta)\in \dom(p_{\nu^+})$ we have by $(*)$ that $\alpha<\mu$ and since $\gamma\in x_\alpha$ we then have $\gamma<\mu$. Therefore, by our thinning out we have that $\gamma<\eta$. Now, by the isomorphism between $(p_{\nu^+},q_{\nu^+})$ and $(p_{\mu^+},q_{\mu^+})$, we have that there is a $\beta'$ such that $(\gamma, \beta')\in \dom(q_{\nu^+})$ and $y_\beta\cap \eta=y_{\beta'}\cap \eta$ and $q_{\nu^+}(\gamma,\beta')= q_{\mu^+}(\gamma,\beta)$. 
And now consider that $\delta\in y_\beta$ and $\delta<\mu$ (since it appears in the domain of $p_{\nu^+}$). Thus $\delta<\eta$ and so also $\delta\in y_{\beta'}$. By the isomorphism of conditions, since $\delta\in  U_{q_{\mu^+}(\gamma,\beta)}(y_\beta)$ we also have $\delta\in  U_{q_{\nu^+}(\gamma,\beta')}(y_\beta')$ (this is because $q_{\mu^+}(\gamma,\beta)=q_{\nu^+}(\gamma,\beta')$ and the initial segment of $y_\beta$ up to $\delta$ is the same as the initial segment of $y_{\beta'}$ up to $\delta$).

So, we have that $(\gamma, \beta')\in \dom(q_{\nu^+})$, $(\alpha,\delta)\in \dom(p_{\nu^+})$ 
and $\delta\in  U_{q_{\nu^+}(\gamma,\beta')}(y_\beta')$ and $\gamma\in U_{p_{\nu^+}(\alpha,\delta)}(x_\alpha)$ contradicting that $(p_{\nu^+},q_{\nu^+})$ is an element of ${\mathbb Q}$. 
\end{proof}
\end{proof}

We need one more lemma.
\begin{lemma}\label{L3} Assume $MA(\omega_1)$. If $H\subseteq L_1\times \omega_1$, $K\subseteq L_1\times L_2$ and $\cl{H}\cap K=\emptyset$,
 then $H$ and $K$ can be separated by disjoint open sets. 
\end{lemma}
Remark that by symmetry, the version of  Lemma \ref{L3}, where $H$ is taken as a subset of $\omega_1\times L_2$ also holds and has the same proof. 

\begin{proof}  We take the natural partially ordered set of finite partial neighborhood assignments to the points of $H\cup K$ that approximate a disjoint open assignment. Namely,

${\mathbb P}$ is the set of pairs $(p,q)$ such that $p\in Fn(H,\omega)$ and $q\in Fn(K,\omega)$ such that $$U_{p(\alpha,\beta)}(x_\alpha)\times \{\beta\}\cap U_{q(\delta,\gamma)}(x_\delta)\times U_{q(\delta,\gamma)}(y_\gamma)=\emptyset,$$ 
whenever $(x_\alpha,\beta)\in H\cap \dom(p)$ and $(x_\delta,y_\gamma)\in K\cap \dom(q)$.

As above, it suffices to prove that ${\mathbb P}$ has the CCC and to this end we index an arbitrary uncountable subset of ${\mathbb P}$ as $\{(p_\alpha, q_\alpha):\alpha\in \omega_1\}$. 

By the same thinning out we did in the previous lemmas, we obtain stationary sets $C\subseteq B$, a $\xi$, finite sets $G,J\subseteq \xi$ such that for each $\alpha,\beta\in B$ the conditions $(p_\alpha,q_\alpha)$ and $(p_\beta,q_\beta)$ are isomorphic via a bijection $j$, meaning
\begin{enumerate}
\item For each $(x_\gamma, \eta)\in \dom p_\alpha$, denoting $j((x_\gamma,\eta))\in \dom(p_\beta)$ by $(x_{\gamma'},\eta')$, we have
\begin{enumerate}
\item $\gamma<\alpha$ iff $\gamma'<\beta$ in which case $\gamma=\gamma' <\xi$
\item $\eta<\alpha$ iff $\eta'<\beta$ in which case $\eta=\eta'<\xi$
\item $\gamma=\alpha$ iff $\gamma'=\beta$ and $\eta=\alpha$ iff $\eta'=\beta$
\item $\gamma>\alpha$ iff $\gamma'>\beta$ in which case $x_\gamma\cap \alpha=x_{\gamma'}\cap \beta \subseteq \xi$. 
\end{enumerate}
\item For each $(x_\gamma, y_\eta)\in \dom q_\alpha$, denoting $j((x_\gamma,y_\eta))\in \dom(q_\beta)$ by $(x_{\gamma'},y_{\eta'})$, we have the same type of isomorphism properties:
\begin{enumerate}
\item $\gamma<\alpha$ iff $\gamma'<\beta$ in which case $\gamma=\gamma' <\xi$
\item $\eta<\alpha$ iff $\eta'<\beta$ in which case $\eta=\eta'<\xi$
\item $\gamma=\alpha$ iff $\gamma'=\beta$ and $\eta=\alpha$ iff $\eta'=\beta$ 
\item $\gamma>\alpha$ iff $\gamma'>\beta$ in which case $x_\gamma\cap \alpha=x_{\gamma'}\cap \beta \subseteq \xi$. 
\item $\eta>\alpha$ iff $\eta'>\beta$ in which case $y_\eta\cap \alpha=y_{\eta'}\cap \beta \subseteq \xi$. 
\end{enumerate}
\end{enumerate}
And again, as before, for each $\alpha\in C$, we denote $\alpha^+$ the minimum of $B$ above $\alpha$, and so that 
\begin{enumerate}
\item if $\alpha^+\in S$ then $x_{\alpha^+}\cap \alpha = G$
\item if $\alpha^+\in T$ then $y_{\alpha^+}\cap \alpha = J$. 
\end{enumerate}
And now we can conclude that the family $\{(p_{\alpha^+},q_{\alpha^+}):\alpha\in C\}$ is centered. 
\end{proof}

Finally, to prove that $X_{L_1}\times X_{L_2}$ is hereditarily normal, fix $A$ and $B$ subsets of $X_{L_1}\times X_{L_2}$ such that
 $\cl{A}\cap B=A\cap\cl{B}=\emptyset$. By Lemma \ref{L1} we may fix $U_2$ and $V_2$ disjoint with $A\cap L_1\times L_2\subseteq U_2$ and $B\cap L_1\times L_2\subseteq V_2$.
Next, by Lemma \ref{L2} fix disjoint open sets $W_1$, and $W_2$ such that $L_1\times \omega_1\subseteq W_1$ and $\omega_1\times L_2\subseteq W_2$. 

Since ladder system spaces $X_{L}$ are normal under $MA(\omega_1)$ so are the subspaces $X_{L_1}\times \omega_1$ and $\omega_1\times X_{L_2}$ being free sums
 of $\omega_1$ copies of the normal subspaces.  So we may find disjoint open sets $U_1^1\subseteq W_1$ and $V_1^1\subseteq W_1$ such that 
$A\cap X_{L_1}\times \omega_1\subseteq U_1^1$ and $B\cap X_{L_1}\times \omega_1\subseteq V_1^1$. By Lemma \ref{L3} we can shrink $V_2$ and $U_2$ and also assume that $U_1^1 \cap V_2=V_1^1\cap U_2=\emptyset$. By the same argument, we can also find $U_1^2$ and $V_1^2$ separating $A\cap \omega_1\times X_{L_2}\ \omega_1$ and 
$B\cap \omega_1\cap X_{L_2}$ and may also assume that $U_1^2 \cap V_2=V_1^2\cap U_2=\emptyset$. So then we have that 
$U=U_2\cup U_1^1\cup U_1^2 (A\cap \omega_1\times \omega_1)$ is an open set containing $A$ 
and $V=V_2\cup V_1^1\cup V_1^2 (B\cap \omega_1\times \omega_1)$ is an open set containing $B$ and $U\cap V=\emptyset$.
 This completes the proof that $X_{L_1}\times X_{L_2}$ is hereditarily normal. 
\end{proof}

In our last problem concerning ladder system spaces, we consider ladder systems over a set of ordinals $D\subseteq \kappa$ where $\kappa>\cont$. It was shown in \cite{BE+} that it is consistent with CH that all ladder systems $L$ on any  subset of $\omega_1$ have $X_L$ countably paracompact, and so it is consistent that all ladder systems on $\cont$ determine $\Delta$-spaces. However, we don't know whether assuming only ZFC one can construct a ladder system on some $\kappa$ giving a ladder system space that is not in the class $\Delta$. 
\begin{problem} \label{problem7}
Does there exist in ZFC a ladder system $L$ on some $\kappa$ whose corresponding space $X_L$ 
is not countably metacompact, hence $X_L \notin \Delta$?
\end{problem}
In other words, we ask for a ZFC example of a ladder system $L$ on some uncountable cardinal that fails to have a relatively weak uniformization property (i.e. the property $CM(L)$ as formulated in \cite{BE+}). 
%%%%%%%%%%%%%%%%%%%%%%
\section{Some examples of $X \notin \Delta$}\label{Section1}
%%%%%%%%%%%%%%%%%%%%%
%Let $\cont$ denote the cardinality of continuum. 
It follows from Theorem \ref{Theor:height_2} that separable compact spaces $X \in \Delta$ with $|X|=\cont$
do exist in ZFC. On the other hand, no $\Delta$-set of reals can have cardinality $\cont$.  % \cite{P1}.
  Below we will extend the last claim for much more general classes of topological spaces.
Denote by $o(X)$ the cardinality of the family of all open sets in $X$.

\begin{theorem}\label{Theor:o(X)}
If $o(X)^{\aleph_0} \leq |X|$, then $X \notin \Delta$.
\end{theorem}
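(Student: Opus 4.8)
The plan is to use the characterization of $\Delta$-spaces from \thmref{Theor:description}(2): $X \in \Delta$ if and only if every countable disjoint collection of subsets of $X$ admits a point-finite open expansion. So it suffices to construct, under the hypothesis $o(X)^{\aleph_0} \le |X|$, a single countable disjoint family $\{A_n : n \in \omega\}$ in $X$ that admits \emph{no} point-finite open expansion. The whole argument is a diagonalization whose budget is controlled by counting sequences of open sets: the number of $\omega$-indexed sequences $(U_n)_{n \in \omega}$ of open subsets of $X$ is exactly $o(X)^{\aleph_0}$, and by hypothesis this cardinal $\kappa := o(X)^{\aleph_0}$ is at most $|X|$. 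In particular, the collection of all \emph{point-finite} open families (a sub-collection of all open sequences) can be enumerated as $\langle (U^\xi_n)_{n\in\omega} : \xi < \kappa \rangle$, possibly with repetitions.

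For the construction I would first choose $\kappa$ pairwise distinct points $\{a_\xi : \xi < \kappa\} \subseteq X$; this is possible precisely because $\kappa \le |X|$. For each $\xi < \kappa$, the family $(U^\xi_n)_n$ is point-finite, so $a_\xi$ lies in only finitely many of the sets $U^\xi_n$; hence the set $\{n : a_\xi \notin U^\xi_n\}$ is cofinite and I may fix an index $n(\xi)$ with $a_\xi \notin U^\xi_{n(\xi)}$. Now set $A_n = \{a_\xi : n(\xi) = n\}$ for each $n \in \omega$. Since the points $a_\xi$ are distinct and each is placed in a single piece $A_{n(\xi)}$, the family $\{A_n : n \in \omega\}$ is a countable disjoint collection of subsets of $X$.

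It remains to check that $\{A_n\}$ has no point-finite open expansion, which I would argue by contradiction. Suppose $(V_n)_n$ were a point-finite family of open sets with $A_n \subseteq V_n$ for all $n$. Being a point-finite open family, $(V_n)_n$ equals $(U^\xi_n)_n$ for some $\xi < \kappa$. But then $a_\xi \in A_{n(\xi)} \subseteq V_{n(\xi)} = U^\xi_{n(\xi)}$, contradicting the choice $a_\xi \notin U^\xi_{n(\xi)}$. Hence no such expansion exists, and by \thmref{Theor:description} we conclude $X \notin \Delta$. The crux of the argument---and the only place the cardinal hypothesis is used---is the matching between the $\kappa$ candidate point-finite expansions and $\kappa$ distinct diagonalizing points; the step that makes each individual diagonalization possible is simply that point-finiteness of $(U^\xi_n)_n$ guarantees infinitely many indices $n$ avoiding any fixed point $a_\xi$. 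I expect no serious obstacle beyond getting the counting right, namely confirming that the number of relevant open families is genuinely bounded by $o(X)^{\aleph_0}$.
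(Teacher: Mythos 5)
Your proof is correct and is essentially the paper's own argument: the same cardinality budget $o(X)^{\aleph_0}\le |X|$ funding the same diagonalization, only routed through the point-finite-expansion characterization (condition (2) of Theorem~\ref{Theor:description}) with the disjoint family $A_n=\{a_\xi : n(\xi)=n\}$, where the paper works directly with the definition of a $\Delta$-space, enumerating all countable open sequences with empty intersection and taking the decreasing sets $A_n=\{x_\alpha : n(\alpha)\ge n\}$. The diagonal step is identical in both (a point $a_\xi$ with $a_\xi\notin U^\xi_{n(\xi)}$ contradicting $A_{n(\xi)}\subseteq U^\xi_{n(\xi)}$), so there is nothing to fix.
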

\begin{proof}
Our proof is based on the argument which appears (implicitly) in \cite{P1}.
Denote the cardinal $o(X)^{\aleph_0}$ by $\lambda$. Enumerate $X =\{x_{\alpha}: \alpha < \tau\}$.
On the contrary, assume that $X \in \Delta$.
Enumerate by $\{\{U_n^{\alpha}: n\in\omega\}:\alpha < \lambda\}$ all countable
sequences of open subsets of $X$ with empty intersection. By assumption, $\lambda \leq \tau$.
For every $\alpha < \lambda$ choose an $n(\alpha) \in \omega$ such that $x_{\alpha} \notin U_{n(\alpha)}^{\alpha}$.
Define $A_n = \{x_\alpha: n(\alpha) \geq n \}$. Clearly, $\bigcap_{n\in\omega} A_n =\emptyset$.
If there existed an $\alpha < \lambda$ such that $A_n \subset U_n^{\alpha}$, for each $n\in\omega$, then we would have
$x_\alpha \in A_{n(\alpha)} \subseteq U_{n(\alpha)}^{\alpha}$, which is a contradiction. This means that $X \notin \Delta$.
\end{proof}
The next Proposition \ref{cor:hereditarily} strengthens notably several results obtained in \cite{FKLS}.

\begin{proposition}\label{cor:hereditarily} {\rm (a)} Let $X$ be a hereditarily separable space. If $|X| =\cont$, then  $X \notin \Delta$.
\newline
{\rm (b)} Let $X$ be a separable hereditarily Lindel\" of space. If $|X| =\cont$, then  $X \notin \Delta$.
\end{proposition}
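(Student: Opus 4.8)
The plan is to derive both parts as corollaries of Theorem~\ref{Theor:o(X)}, whose hypothesis is $o(X)^{\aleph_0} \leq |X|$. Since $|X| = \cont$ is assumed in both cases, it suffices to show that $o(X) \leq \cont$, because then $o(X)^{\aleph_0} \leq \cont^{\aleph_0} = \cont = |X|$, and Theorem~\ref{Theor:o(X)} applies directly to give $X \notin \Delta$. So the entire task reduces to bounding the number of open sets by the continuum under each of the two hypotheses.

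For part (a), I would argue that a hereditarily separable space has at most $\cont$ open sets. The key observation is that in a hereditarily separable space every subset is separable, so in particular every open set $U$ contains a countable dense (in $U$) subset $D_U \subseteq U$. Since $U$ is open, $U \subseteq \cl{D_U}$, and combined with $D_U \subseteq U$ this gives $U = U \cap \cl{D_U}$; more usefully, $U$ is determined by a countable subset together with the relation of being the largest open set whose closure-trace recovers $U$. The cleanest route: assign to each open $U$ a countable dense subset $D_U$; the map $U \mapsto D_U$ need not be injective, but any two open sets with the same countable dense subset $D$ are both equal to the union of all open sets containing $D$ densely, hence equal to each other if we instead take $D_U$ and note $U$ is the interior of $\cl{D_U}$ — one must check $U = \Int(\cl{D_U})$. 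Since $|X| = \cont$, the number of countable subsets of $X$ is $\cont^{\aleph_0} = \cont$, so $o(X) \leq \cont$.

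For part (b), under separable and hereditarily Lindel\"of, I would dualize: hereditarily Lindel\"of means every open set (indeed every subspace) is Lindel\"of, so every open cover of an open set has a countable subcover. Equivalently, every open set is a union of countably many basic open sets from any fixed base, but more robustly, hereditarily Lindel\"of implies the space is perfect, so every open set is an $F_\sigma$ and every closed set is a $G_\delta$. The standard fact here is that a hereditarily Lindel\"of space has at most $\cont$ open sets whenever it has a base of size at most $\cont$; since $X$ is separable and (being hereditarily Lindel\"of) has countable spread, one shows the weight is at most $\cont$, and then $o(X)$ is bounded by the number of unions of subfamilies of a weight-sized base — but this naive bound gives $2^{\cont}$, so I must be more careful. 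The correct approach: in a hereditarily Lindel\"of space each open set is a countable union of members of a fixed base of size $\leq \cont$, so $o(X) \leq (\cont)^{\aleph_0} = \cont$. Thus again $o(X) \leq \cont$ and Theorem~\ref{Theor:o(X)} finishes the proof.

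The main obstacle I anticipate is establishing the bound $o(X) \leq \cont$ cleanly in each case, and in particular making sure the base has size at most $\cont$ in part (b): separability alone bounds the density, but I need that the network weight or weight is at most $\cont$, which follows because a separable hereditarily Lindel\"of space has cardinality at most $\cont$ and one can extract a base (or network) of size $\cont$, after which writing each open set as a \emph{countable} union of base elements (using hereditary Lindel\"ofness) yields the count $\cont^{\aleph_0} = \cont$. Verifying $U = \Int(\cl{D_U})$ in part (a) and the countable-union representation in part (b) are the two routine but essential checks; everything else is a direct appeal to Theorem~\ref{Theor:o(X)}.
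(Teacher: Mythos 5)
Your overall strategy --- bounding $o(X)$ by $\cont$ and then invoking Theorem~\ref{Theor:o(X)} --- is exactly the paper's, which cites Hodel's inequalities $o(X)\le |X|^{hd(X)}$, $w(X)\le 2^{d(X)}$ (for regular spaces) and $o(X)\le w(X)^{hL(X)}$. Your part (b) is in substance these inequalities unwound: regularity (guaranteed by the paper's standing Tychonoff assumption, and genuinely needed for $w(X)\le 2^{d(X)}$) gives a base of size at most $2^{d(X)}=\cont$, e.g.\ $\{\operatorname{int}(\cl{A}):A\subseteq D\}$ for $D$ a countable dense set, and hereditary Lindel\"ofness lets you write each open set as a countable union of base members, so $o(X)\le \cont^{\aleph_0}=\cont$. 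One caveat: your parenthetical fallback ``(or network)'' does not work --- members of a network need not be open, so the Lindel\"of extraction of countable subcovers does not apply to them, and a network of size $\cont$ by itself only yields the useless bound $o(X)\le 2^{\cont}$. You need an actual base at this step.

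Part (a), however, has a genuine gap: the identity $U=\operatorname{int}(\cl{D_U})$ on which your coding relies is false unless $U$ happens to be regular open. Take $X=\R$ and $U=(0,1)\cup(1,2)$; any countable dense $D_U\subseteq U$ has $\cl{D_U}=[0,2]$, so $\operatorname{int}(\cl{D_U})=(0,2)\neq U$. Indeed $(0,2)$ and $(0,1)\cup(1,2)$ share common countable dense subsets, so the assignment $U\mapsto D_U$ can collide on distinct open sets, and the bound $o(X)\le\cont$ does not follow as written (your alternative recovery, ``the union of all open sets containing $D$ densely,'' fails on the same example). The repair is to dualize: count closed sets instead of open ones. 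Hereditary separability gives every closed set $C$ a countable subset $D_C$ dense in $C$, and now the recovery is automatic: $C=\cl{D_C}$, with no analogue of the regular-open obstruction. Hence $D\mapsto\cl{D}$ maps $[X]^{\le\omega}$ onto the family of closed sets, so the number of closed sets --- which equals $o(X)$ by complementation --- is at most $\cont^{\aleph_0}=\cont$. This is precisely how Hodel's inequality $o(X)\le|X|^{hd(X)}$, the one the paper quotes for part (a), is proved; with this one change your argument is complete and coincides with the paper's.
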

\begin{proof}
(a) For any $X$, the inequality $o(X) \leq |X|^{hd(X)}$ holds \cite{Hodel}.
 Since $hd(X)=\aleph_0$ we get that $o(X)^{\aleph_0} \leq \cont^{\aleph_0 \times \aleph_0} = |X|$, and Theorem \ref{Theor:o(X)} applies.
\newline
(b) For any (regular) $X$, the inequalities $w(X) \leq 2^{d(X)}$ and $o(X) \leq w(X)^{hL(X)}$ hold \cite{Hodel},
so again $o(X)^{\aleph_0} \leq \cont^{\aleph_0 \times \aleph_0} = |X|$, and Theorem \ref{Theor:o(X)} applies.
\end{proof}

Let $\S$ denote the Sorgenfrey line. It has been shown in \cite{FKLS} that $\S \notin \Delta$. Note that the proof presented in \cite{FKLS}
is valid for any subset of $\S$ containing a segment but it does not work for more complicated subspaces of $\S$. Proposition \ref{cor:hereditarily}(a)
immediately implies 

\begin{corollary}\label{cor:Sorgenfrey}
Let $X$ be any subspace of the Sorgenfrey line $\S$ with $|X|= \cont$. Then $X \notin \Delta$.
\end{corollary}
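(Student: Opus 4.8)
The final statement is Corollary~\ref{cor:Sorgenfrey}, which asserts that any subspace $X$ of the Sorgenfrey line $\S$ with $|X| = \cont$ fails to be a $\Delta$-space. The plan is to derive this as an immediate consequence of Proposition~\ref{cor:hereditarily}(a), so the entire task reduces to verifying the hypotheses of that proposition for $X$. Concretely, I would show that any subspace of $\S$ is hereditarily separable, at which point the proposition delivers $X \notin \Delta$ the moment $|X| = \cont$.

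First I would recall that hereditary separability is itself a hereditary property: a subspace of a hereditarily separable space is again hereditarily separable, since the defining condition (every subspace has a countable dense subset) is inherited downward. Thus it suffices to prove that $\S$ itself is hereditarily separable. The cleanest route is to invoke the well-known fact that the Sorgenfrey line is hereditarily Lindel\"of and hereditarily separable; both follow from the standard observation that for any subset $Y \subseteq \S$, the family of basic half-open intervals $[y, y+\eps)$ witnessing points of $Y$ can be reduced to a countable subfamily covering $Y$ up to a countable remainder, yielding a countable dense set in the subspace topology. This gives $hd(\S) = \aleph_0$, hence $hd(X) = \aleph_0$ for every $X \subseteq \S$.

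With hereditary separability of $X$ in hand, and the cardinality hypothesis $|X| = \cont$ supplied directly, Proposition~\ref{cor:hereditarily}(a) applies verbatim and yields $X \notin \Delta$. No further estimate is needed, because the proposition already packages the inequality $o(X)^{\aleph_0} \leq |X|$ (obtained from $o(X) \leq |X|^{hd(X)} = \cont^{\aleph_0} = \cont$) together with the appeal to Theorem~\ref{Theor:o(X)}.

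I do not anticipate a genuine obstacle here; the only point requiring a moment of care is confirming that \emph{every} subspace of $\S$ is hereditarily separable, not merely the intervals, which is precisely the gap in the earlier argument of \cite{FKLS} that this corollary is designed to close. The remark preceding the corollary flags exactly this issue, so I would make sure the proof of hereditary separability is phrased for an arbitrary $Y \subseteq \S$ rather than relying on order-completeness or the presence of a segment. Once that is secured, the corollary is a one-line deduction.
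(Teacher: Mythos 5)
Your proof is correct and is essentially identical to the paper's: the authors deduce the corollary immediately from Proposition~\ref{cor:hereditarily}(a), relying (as you do) on the standard fact that the Sorgenfrey line, and hence every subspace of it, is hereditarily separable. Your added care in verifying hereditary separability for an arbitrary $Y \subseteq \S$ is sound and only makes explicit what the paper leaves as well known.
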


%\begin{remark}\label{remark:sep} By Proposition \ref{cor:hereditarily}(a), CH implies that there are no
%uncountable hereditarily separable $\Delta$-spaces. On the other hand, it is consistent that
%every subset $X \subset \R$ of cardinality less than $\cont$ is a $Q$-set.
%\end{remark}

\begin{remark}\label{remark:Lspace} 
It is not clear whether an assumption of separability can be omitted in Proposition \ref{cor:hereditarily}(b).
Assume that $X$ is a hereditarily Lindel\" of $\Delta$-space with $|X| =\cont$.
Then $X$ can not be separable by Proposition \ref{cor:hereditarily}(b). So, $X$ would be an $L$-space which is also a $\Delta$-space.
A highly nontrivial example of an $L$-space in ZFC was constructed by J. T. Moore \cite{Moore}. Recently it was shown \cite{PM} that Moore's $L$-space
 is not a $Q$-set space and under the
assumption that all Aronszajn trees are special, Moore's $L$-space is not a $\Delta$-space, but we don't know whether his construction can 
give an $L$-space which is also a $\Delta$-space.
Every hereditarily Lindel\" of scattered space contains a countable dense set of isolated points, hence an $L$-space
can not be homeomorphically embedded into a compact $\Delta$-space. 

 As mentioned in Introduction, Z. Balogh \cite{Balogh} constructed a hereditarily paracompact, perfectly normal $Q$-set space $X$ such that $|X|= \cont$. 
Dennis Burke informed the authors that there are handwritten notes by Z. Balogh where he started to outline a strategy for constructing
a Lindel\" of $Q$-set space of cardinality continuum. Such a space would be evidently a hereditarily Lindel\" of $\Delta$-space.
However, according to Dennis Burke, these notes are incomplete and seem to leave things hanging.
So, the following apparently very challenging problem remains open.  
\end{remark}

\begin{problem} \label{problem1}
Does there exist in ZFC an uncountable hereditarily Lindel\" of $\Delta$-space? or even an uncountable hereditarily Lindel\" of $Q$-set space?
\end{problem}

Recall that a space is {\it resolvable} ({\it $\omega$-resolvable})  if it can be partitioned into $2$ (countably many) dense subsets.

\begin{proposition} If $X$ is Baire and $\omega$-resolvable, then it is not a $\Delta$-space. 
\end{proposition}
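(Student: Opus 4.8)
The plan is to exploit the tension between the two hypotheses: $\omega$-resolvability produces a decreasing sequence of \emph{dense} sets with empty intersection, while the Baire property forbids a decreasing sequence of dense \emph{open} sets from having empty intersection. So any open expansion of the former would have to be the latter, and no such expansion can collapse to the empty set.

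First I would use $\omega$-resolvability to fix a partition $X = \bigcup_{n \in \omega} E_n$ into pairwise disjoint dense subsets, and then set $D_n = \bigcup_{k \geq n} E_k = X \setminus \bigcup_{k < n} E_k$. Then $\{D_n : n \in \omega\}$ is a decreasing sequence of subsets of $X$, and $\bigcap_n D_n = \emptyset$, since the $E_k$ partition $X$ (each point lies in a unique $E_k$ and is thus excluded from $D_{k+1}$). The key point is that each $D_n$ contains the dense set $E_n$, so each $D_n$ is itself dense in $X$.

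Next, arguing by contradiction, I would assume $X \in \Delta$ and apply the definition to the sequence $\{D_n\}$: this yields a decreasing sequence $\{V_n : n \in \omega\}$ of open sets with $D_n \subseteq V_n$ and $\bigcap_n V_n = \emptyset$. Since $D_n$ is dense and $D_n \subseteq V_n$, each $V_n$ is a dense open subset of $X$. But $X$ is Baire, so the countable intersection $\bigcap_n V_n$ of dense open sets is dense, hence nonempty (recall all spaces here are infinite, in particular nonempty). This contradicts $\bigcap_n V_n = \emptyset$, and therefore $X \notin \Delta$.

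There is essentially no hard step: the only thing requiring a word of justification is that the open expansion inherits density from $D_n$, which is immediate since a superset of a dense set is dense, after which the Baire property does all the work. The role of $\omega$-resolvability (rather than mere resolvability into two pieces) is solely to manufacture a decreasing $\omega$-chain of dense sets whose intersection vanishes; a single split into two dense pieces would not produce the required empty intersection along a countable decreasing sequence.
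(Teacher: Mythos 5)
Your proof is correct and is essentially the paper's argument: both exploit the fact that any open expansion of a partition of $X$ into countably many dense sets consists of dense open sets, which the Baire property forces to have nonempty intersection. The only cosmetic difference is that you pass to the decreasing tails $D_n=\bigcup_{k\geq n}E_k$ so as to apply the decreasing-sequence definition of a $\Delta$-space, whereas the paper applies the equivalent point-finite open expansion characterization (Theorem~1.1(2)) directly to the partition.
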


\begin{proof} Fix a partition of $X$ into countably many dense sets $D_n$. If $U_n\supseteq D_n$ then $U_n$ is dense open and so $\bigcap_{n\in\omega} U_n\not=\emptyset$ and so $\{D_n:n\in \omega\}$ has no point-finite open expansion. 
\end{proof}
Since it is well known that a Souslin line is Baire and $\omega$-resolvable we obtain
\begin{corollary} A Souslin line is not a $\Delta$-space.
\end{corollary}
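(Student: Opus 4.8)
The plan is to derive the corollary immediately from the preceding Proposition, so the whole task reduces to checking that a Souslin line $S$ is Baire and $\omega$-resolvable. Recall that $S$ may be taken to be a Dedekind-complete, densely ordered set without endpoints, equipped with the order topology, that is ccc but not separable (if one starts from a not-necessarily-complete Souslin line, one first passes to its Dedekind completion, which is again ccc, non-separable, and contains the original as a dense subspace; for the present purpose it is harmless to work with the completion). In particular $S$ is crowded, since a densely ordered set has no isolated points.

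For the Baire property I would argue through local compactness. In a Dedekind-complete linear order every closed bounded interval $[a,b]$ is compact in the order topology, by the usual supremum argument: given an open cover, the set of $x\in[a,b]$ for which $[a,x]$ is covered by finitely many members is nonempty, is an initial segment, and its supremum belongs to it and cannot be a proper upper bound. Since $S$ is densely ordered without endpoints, the closed intervals $[a,b]$ form a neighbourhood base of compact sets, so $S$ is locally compact Hausdorff and hence Baire.

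The substantive point is $\omega$-resolvability, that is, partitioning $S$ into countably many pairwise disjoint dense subsets. Here I would first note that $S$ is first countable: if some point $p$ had uncountable cofinality from, say, the left, an increasing $\omega_1$-sequence $p_\alpha\nearrow p$ would produce the pairwise disjoint nonempty intervals $(p_\alpha,p_{\alpha+1})$, contradicting ccc; thus every point has countable character. Being crowded and first countable (indeed a crowded linearly ordered space), $S$ is $\omega$-resolvable; this is precisely the ``well known'' fact cited in the statement, and it follows from standard resolvability theory for crowded first-countable (equivalently, crowded linearly ordered) spaces. With $S$ shown to be Baire and $\omega$-resolvable, the Proposition applies and yields $S\notin\Delta$.

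The main obstacle is exactly the $\omega$-resolvability: unlike the Baire property, it is not an order-completeness phenomenon and must be extracted from the combinatorics of $S$. If one prefers an explicit construction to citing the general theorem, one fixes a base $\{I_\xi:\xi<\omega_1\}$ of nonempty open intervals (of size $w(S)=\aleph_1$, via the standard correspondence with a Souslin tree) and builds the dense sets by a transfinite recursion, committing fresh points of $I_\xi$ at stage $\xi$ and dealing the $k$-th into the $k$-th class. The delicate point there is that an interval $I_\xi$ may itself be separable, even countable (e.g. in an ordered sum placing a copy of $\Q$ before a Souslin line), so one cannot simply argue that intervals are never exhausted; the recursion must be organized so that no class is starved on any basic interval, which is the content one is really borrowing from the general $\omega$-resolvability theorem.
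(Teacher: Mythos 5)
Your overall route is the same as the paper's: the corollary is deduced immediately from the preceding Proposition, with the Baire property and $\omega$-resolvability of the Souslin line quoted as known facts (the paper's proof is exactly this one line). Your treatment of $\omega$-resolvability is essentially fine: what is really being used is Ceder's theorem that crowded linearly ordered spaces are (maximally) resolvable, so the detour through first countability is unnecessary, and the parenthetical claim that ``crowded first-countable'' is \emph{equivalent} to ``crowded linearly ordered'' is not literally true --- but this is harmless, since the space at hand is a LOTS and the LOTS case of the resolvability theorem is what you cite.

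The genuine gap is the reduction to the Dedekind completion. The class $\Delta$ is invariant under taking arbitrary subspaces (as recalled in the paper's introduction), so non-membership in $\Delta$ does \emph{not} pass from a space to a dense subspace: $\R\notin\Delta$, while its dense subspace $\Q$ is trivially a $\Delta$-space. Hence proving that the completion $\overline{S}$ is Baire and $\omega$-resolvable, and therefore $\overline{S}\notin\Delta$, says nothing about $S$ itself unless $S$ was complete to begin with; your phrase ``it is harmless to work with the completion'' is precisely the unjustified step, and it is load-bearing, because your Baire argument (compactness of closed intervals via the supremum argument) genuinely requires completeness and fails for, say, the Souslin line $\Q + S_0$ obtained by placing a copy of $\Q$ below a Souslin line, which is not even Baire. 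To cover an arbitrary Souslin line $S$ you should work inside $S$: by ccc and a maximal disjoint family of separable open intervals, $S$ contains a nonempty open interval $I$ all of whose nonempty open subintervals are non-separable; then show that $I$ itself is Baire --- for instance, $I$ is dense in a Souslin continuum, and a Souslin continuum admits no dense meager subset (countably many closed nowhere dense sets correspond to co-dense upward closed subsets of the associated Souslin tree of intervals, and Souslin trees are $\omega$-distributive), whence every dense subspace of it is Baire --- and apply the Proposition to $I$; the subspace-invariance of $\Delta$ then yields $S\notin\Delta$. Alternatively, restrict the statement to complete (normalized) Souslin lines, for which your proof is correct and matches the paper's.
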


More generally, any Lindel\"of regular space with all open sets uncountable is resolvable \cite{Fil}.
This result was improved in \cite{JSS} to $\omega$-resolvable, and since Baire spaces without isolated points have all open subsets uncountable, we obtain

\begin{corollary}\label{Cor_Lind} A Lindel\"of Baire space without isolated points is not a $\Delta$-space. 
\end{corollary}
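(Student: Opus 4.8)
The plan is to obtain this corollary by chaining the preceding Proposition with the two resolvability results cited just above. The one nontrivial ingredient is the observation that a Baire space without isolated points has all nonempty open sets uncountable, so I would establish that first. Let $U$ be a nonempty open subset of such a space $X$. As an open subspace of a Baire space, $U$ is itself Baire, and it inherits the property of having no isolated points. Were $U$ countable, say $U=\{x_n:n\in\omega\}$, then each singleton $\{x_n\}$ is nowhere dense in $U$ (a singleton with empty interior, since $U$ has no isolated point), and $U=\bigcup_{n\in\omega}\{x_n\}$ would exhibit $U$ as a countable union of nowhere dense sets, contradicting the Baire property of $U$. Hence every nonempty open subset of $X$ is uncountable.

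With this in hand, the remaining steps are immediate. Since $X$ is Lindel\"of and regular (in fact Tychonoff, by the standing assumption of the paper) and every nonempty open set is uncountable, the theorem of \cite{Fil}, in the strengthened form proved in \cite{JSS}, yields that $X$ is $\omega$-resolvable. Finally, $X$ is simultaneously Baire and $\omega$-resolvable, so the preceding Proposition applies and shows that $X\notin\Delta$.

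There is no genuine obstacle here beyond correctly assembling the ingredients; the only point requiring an argument rather than a citation is the Baire-category computation showing that nonempty open sets are uncountable, and even that is routine. The delicate external inputs are the passage from \emph{resolvable} \cite{Fil} to \emph{$\omega$-resolvable} \cite{JSS} under Lindel\"ofness, which I would cite rather than reprove.
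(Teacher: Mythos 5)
Your proof is correct and follows exactly the paper's own route: the paper likewise observes that a Baire space without isolated points has all nonempty open sets uncountable, invokes the resolvability result of \cite{Fil} as improved to $\omega$-resolvability in \cite{JSS} for Lindel\"of regular spaces, and then applies the preceding Proposition on Baire $\omega$-resolvable spaces. Your only addition is spelling out the routine Baire-category argument for the uncountability of open sets, which the paper leaves implicit.
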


It follows, for example, that the classical $L$-spaces constructed from CH (see e.g. \cite{Roit}) are not $\Delta$-spaces.

Note that Corollary \ref{Cor_Lind} provides a strengthening of a result mentioned in the Introduction: compact $\Delta$-spaces must be scattered \cite{KL}.  
Whether there are more interesting examples of Baire spaces that are either $\Delta$-spaces or $Q$-set spaces is open, so we ask

\begin{problem}\label{prob_Baire} Does every Baire $\Delta$-space have an isolated point? Are there uncountable Baire $Q$-set spaces?
\end{problem}

Corollary \ref{Cor_Lind} also suggests a measure-theoretic analogue:
\begin{problem} If $X$ admits a strictly positive $\sigma$-additive measure vanishing on points is then $X\not\in \Delta$?
\end{problem}

As mentioned in Introduction, $\omega_1$ is an example of a first countable, locally compact, scattered space that is not a $\Delta$-space. 
We now characterize those subspaces of $\omega_1$ which are $\Delta$-spaces (a fact that will be useful for what follows). 

\begin{theorem}\label{Theor:omega1} For a subset $X\subseteq \omega_1$, the following conditions are equivalent{\rm:}
\begin{enumerate}
\item[{\rm (1)}] $X$ is a nonstationary set. 
\item[{\rm (2)}] $X$ is a $Q$-space.
\item[{\rm (3)}] $X$ is a $\Delta$-space.
\end{enumerate}
\end{theorem}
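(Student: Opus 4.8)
The plan is to prove the cycle $(1)\Rightarrow(2)\Rightarrow(3)\Rightarrow(1)$, so that the only substantial work lies in the first and last implications; the middle one is immediate from the facts recorded in the Introduction that every $\sigma$-closed discrete space is a $Q$-space and every $Q$-space is a $\Delta$-space.

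For $(1)\Rightarrow(2)$ I would first reduce the $Q$-space conclusion to showing that a nonstationary $X$ is $\sigma$-closed discrete. Fix a club $C$ disjoint from $X$ and let $\{c_\xi:\xi<\omega_1\}$ be its increasing continuous enumeration. Since $C$ is closed and misses $X$, every $x\in X$ lies in a unique \emph{gap} $(c_\xi,c_{\xi+1})$, and because $c_{\xi+1}<\omega_1$ each such gap is a countable interval, so $X\cap(c_\xi,c_{\xi+1})$ is countable. Enumerate it as $\{x_{\xi,n}:n<N_\xi\}$ with $N_\xi\le\omega$, and set $Y_n=\{x_{\xi,n}:N_\xi>n\}$, one point per gap. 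Then $X=\bigcup_n Y_n$, and I would check that each $Y_n$ is closed discrete in $X$: the endpoints $c_\xi\notin X$ separate the gaps, so a neighborhood of the form $(c_\xi,x_{\xi,n}]\cap X$ isolates $x_{\xi,n}$ inside $Y_n$, and the same kind of initial-segment neighborhood shows no point of $X$ can be a limit of $Y_n$. Hence $X$ is $\sigma$-closed discrete, and therefore a $Q$-space.

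For the crucial implication I would prove the contrapositive of $(3)\Rightarrow(1)$: a \emph{stationary} $X$ is not a $\Delta$-space. Here I would use the characterization $(1)\Leftrightarrow(2)$ of Theorem~\ref{Theor:description} and exhibit a countable disjoint collection of subsets of $X$ with no point-finite open expansion. Since the limit ordinals form a club, $X\cap\mathrm{Lim}(\omega_1)$ is stationary, and by the classical Ulam-matrix argument it can be split into countably many pairwise disjoint stationary sets $\{S_n:n\in\omega\}$ (all consisting of limit ordinals). Suppose toward a contradiction that $\{U_n:n\in\omega\}$ is a point-finite open expansion with $S_n\subseteq U_n$. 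For each $n$ and each $\alpha\in S_n$, openness of $U_n$ gives $f_n(\alpha)<\alpha$ with $(f_n(\alpha),\alpha]\cap X\subseteq U_n$; Fodor's pressing-down lemma yields a stationary $S_n'\subseteq S_n$ on which $f_n$ is constant, say equal to $\beta_n$. Put $\beta=\sup_n\beta_n<\omega_1$. The point to extract is that any $\gamma\in X$ with $\gamma>\beta$ lies in every $U_n$: for each $n$ pick $\alpha\in S_n'$ with $\alpha\ge\gamma$ (possible as $S_n'$ is unbounded), whence $\gamma\in(\beta_n,\alpha]\cap X\subseteq U_n$. Thus $\gamma\in\bigcap_n U_n$, contradicting point-finiteness, and $(3)\Rightarrow(1)$ follows.

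I expect the last implication to be the main obstacle, and its heart is the combination of Fodor's lemma with the splitting of a stationary set into countably many stationary pieces: the uniform bound $\beta_n$ obtained from pressing down is exactly what forces a common point into all the expanding open sets. The structural lemma in $(1)\Rightarrow(2)$—that a nonstationary set is $\sigma$-closed discrete via the gap decomposition—is the other point requiring care, chiefly in verifying closedness of the slices $Y_n$; the remaining implication is purely citational.
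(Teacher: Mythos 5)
Your proof is correct and takes essentially the same approach as the paper: the crucial implication $(3)\Rightarrow(1)$ is exactly the paper's argument --- restrict the stationary set to a club of ``limit'' points (the paper uses limit points of $X$, you use limit ordinals, which is immaterial), split it into countably many pairwise disjoint stationary sets, and apply Fodor's lemma to show each expanding open set must contain a tail of $X$, contradicting point-finiteness. The only cosmetic difference is in $(1)\Rightarrow(2)$, where you extract a $\sigma$-closed-discrete decomposition by taking transversals of the countable gaps determined by the avoiding club, while the paper instead notes that $X$ is a free topological union of countable clopen pieces and hence a $Q$-space; both rest on the same gap structure.
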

\begin{proof} If $X \subset \omega_1$  is nonstationary, then since $\omega_1 \setminus X$ contains a closed unbounded set,
$X$ is the free topological union of pairwise disjoint countable open sets.
It is easily seen that any countable set is a $Q$-space and any free union of $Q$-spaces is a $Q$-space. This proves (1) $\Rightarrow$ (2).
(2) $\Rightarrow$ (3) has been mentioned before.

It remains to prove (3) $\Rightarrow$ (1). Assume $X$ is stationary.
Let $L=\{\alpha \in \omega_1: \alpha\,\,\mbox{is a limit point of}\,\,X\}$. Then $X^{'} = X \cap L$ is also a stationary set.
Represent $X^{'}$ as a union of countably many pairwise disjoint stationary sets $\{X_n: n\in \omega\}$.

As an easy application of Fodor's Lemma the family $\{X_n: n\in \omega\}$ has no point-finite open expansion. 
 Indeed, if $Y\subseteq \omega_1$ is stationary, and $Y\subseteq U$, where $U$ is open, then $(\beta,\omega_1)\subseteq U$ for some $\beta$.
Therefore, if $U_n$ are open and $X_n\subseteq U_n$ for each $n\in\omega$, then there is an ordinal  $\gamma\in \omega_1$ such that 
$X\setminus \gamma\subseteq \bigcap_{n\in\omega} U_n$. It follows that the disjoint family $\{X_n:n\in \omega\}$ has no point-finite open expansion and so $X\not\in \Delta$.
\end{proof}

%Recall that a topological space $X$ is a {\it $P$-space }if countable intersections of open sets in $X$ are open. 
Now we give an example of a Lindel\"of $P$-space answering negatively the following question posed in
\cite[Problem~24]{FS}: if $X$ is a $P$-space, must $C_p(X)$ be distinguished?

\begin{example}\label{P-space}
The $G_{\delta}$-modification $W_{\delta}$ of a topological space $W$ is the space on the same
underlying set generated by the family of all $G_{\delta}$-sets of $W$.
It is known that if a compact space $W$ is scattered then $W_{\delta}$ is Lindel\" of \cite{Meyer}.
For instance, if $W$ is the compact scattered ordinal space $[0, \omega_1]$, then $X = W_{\delta}$ is the one-point
Lindel\" ofication of a discrete set. 

Let $W$ be the compact scattered ordinal space $[0, \omega_2]$. Then $X = W_{\delta}$ is a Lindel\" of $P$-space
such that each ordinal $\alpha$ with countable cofinality is isolated in $X$,
and each ordinal $\alpha$ with uncountable cofinality is a limit point in $X$.
Note that the set of all ordinals in $\omega_2$ with cofinality  $\omega_1$ is stationary.
Therefore, repeating the argument of above Theorem \ref{Theor:omega1}, again by the Fodor's lemma,
we show that $ W_{\delta}\notin \Delta$.
\end{example}

\begin{problem} \label{problem2}
Find a characterization of those scattered compact spaces $W$ such that $W_{\delta} \in \Delta$.
\end{problem}

We end the paper with some observations about the relationship between $\Delta$-spaces and $Q$-set spaces.
Recall that Balogh's definition of a $Q$-set space requires the space to be not $\sigma$-discrete. He was interested in the existence of non-trivial examples of spaces where every subset was $G_\delta$. 
% since any $\sigma$-discrete $Q$-space is $\sigma$-closed discrete. Any $\sigma$-closed discrete space trivially is a $Q$-space.
%Z.\ Balogh defined a $Q$-set space to be a space $X$ so that every subset of $X$ is a $G_\delta$ but $X$ itself is not $\sigma$-discrete. He gave ZFC constructions of $Q$-set spaces, one even that is paracompact (\cite {Balogh}). 
We have described in the paper a number of compact $\Delta$-spaces in ZFC that contain subsets which are not $G_\delta$ (e.g. Examples \ref{Theor:height_2} and \ref{example2}),
 but all of them are $\sigma$-discrete and have at least one point that is not a $G_\delta$. In \cite{Casas} J.\ Casas de la Rosa observed that
 the Alexandroff duplicate $X$ of a $Q$-set space is a $\Delta$-space which is not $\sigma$-discrete and all singletons $\{x\}$ are $G_\delta$-sets.
 But this space $X$ does include a closed non-$G_\delta$ subset. So we ask 
\begin{problem} Is there in ZFC a $\Delta$-space which is not $\sigma$-discrete, has all closed sets $G_\delta$ but is not a $Q$-set space?
\end{problem}
Perhaps a modification of the Balogh-Rudin technique that Balogh used to construct ZFC examples of $Q$-set spaces and Dowker spaces could be used to give a positive answer. 

S. Shelah \cite{Shelah} showed that consistently there is a normal ladder system space $X_L$ (hence $X_L$ is a $\Delta$-space)
such that the closed discrete set of non-isolated points is not $G_{\delta}$.
On the other hand, under PMEA, in a first-countable, countably metacompact $T_1$ space every closed discrete subset is a $G_{\delta}$-set \cite{Burke}. 
Moreover, as we have mentioned in Remark \ref{remark:DS}, $MA(\omega_1)$ implies that every subset of a ladder system space $X_L$ is $G_{\delta}$.

For a subclass of $\Delta$-spaces which includes all Tychonoff spaces embeddable into a scattered Eberlein compact space,
we have a positive result, in ZFC.
A family $\left\{\mathcal{N}_{x}:x\in X\right\}$ of subsets of a topological space $X$ is called a {\it point-finite neighborhood assignment} 
 if each $\mathcal{N}_{x}$ is an open neighbourhood of $x$ and for each $u\in X$ the set $\left\{ x\in X:u\in \mathcal{N}_{x}\right\}$ is finite.
It is proved in \cite[Theorem 46]{FKLS} that if $X$ admits a point-finite neighborhood assignment $\left\{ \mathcal{N}_{x}:x\in X\right\}$ then $X\in\Delta$.
Also, a compact space $X$ admits a point-finite neighborhood assignment if and only if $X$ is a scattered Eberlein compact space \cite[Theorem 51]{FKLS}.

\begin{theorem} \label{Th:pfna} Assume that a topological space $X$ admits a point-finite neighborhood assignment. 
If $F$ is any subset of $X$ consisting of points which are $G_{\delta}$ in $X$,
then $F$ is a $G_{\delta}$-set in $X$.
\end{theorem}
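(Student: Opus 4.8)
The plan is to build an explicit $G_\delta$ representation of $F$ out of the pointwise $G_\delta$ representations of its singletons, and then to use the point-finiteness of the neighborhood assignment to prevent the candidate intersection from picking up any extraneous points. The point-finite assignment is the one lever that connects ``each point of $F$ is $G_\delta$'' to ``$F$ itself is $G_\delta$,'' so the whole argument is organized around deploying it at exactly one place.

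First I would fix a point-finite neighborhood assignment $\{\mathcal{N}_x : x \in X\}$. For each $x \in F$, since $\{x\}$ is $G_\delta$, I would write $\{x\} = \bigcap_{n \in \omega} W^x_n$ with each $W^x_n$ open. By replacing $W^x_n$ with the intersection $W^x_0 \cap \cdots \cap W^x_n \cap \mathcal{N}_x$, I may assume without loss of generality that the sequence $\{W^x_n : n \in \omega\}$ is decreasing and that $W^x_n \subseteq \mathcal{N}_x$ for every $n$. The second property is the key normalization: it ties every approximating open set of $x$ into the neighborhood $\mathcal{N}_x$ that the point-finite assignment controls.

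Next I would set $V_n = \bigcup_{x \in F} W^x_n$ for each $n \in \omega$. Each $V_n$ is open, and the inclusion $F \subseteq \bigcap_{n \in \omega} V_n$ is immediate since $x \in W^x_n \subseteq V_n$. The substance of the proof is the reverse inclusion. Suppose $u \in \bigcap_{n \in \omega} V_n$; then for each $n$ there is some witness $x_n \in F$ with $u \in W^{x_n}_n$. Since $W^{x_n}_n \subseteq \mathcal{N}_{x_n}$, we get $u \in \mathcal{N}_{x_n}$ for all $n$, so every $x_n$ lies in the set $\{x \in X : u \in \mathcal{N}_x\}$, which is \emph{finite} by point-finiteness. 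By the pigeonhole principle some single $x \in F$ satisfies $x_n = x$ for infinitely many $n$.

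Finally I would invoke that $\{W^x_n : n \in \omega\}$ is decreasing: if $u \in W^x_n$ for infinitely many $n$, then for every $m$ there is $n \ge m$ with $u \in W^x_n \subseteq W^x_m$, whence $u \in W^x_m$ for all $m$, i.e. $u \in \bigcap_{m \in \omega} W^x_m = \{x\}$. Therefore $u = x \in F$, which establishes $\bigcap_{n \in \omega} V_n = F$ and shows that $F$ is a $G_\delta$-set. The step I expect to be the crux — though the argument is short — is precisely the appeal to point-finiteness to confine the witnesses $x_n$ to a finite set; without it the different witnesses at different levels could accumulate and force $\bigcap_n V_n$ to contain limit points outside $F$, so this is the only place where the hypothesis on $X$ is genuinely used.
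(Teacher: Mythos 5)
Your proposal is correct and follows essentially the same argument as the paper: the same normalization (a decreasing sequence of open sets contained in $\mathcal{N}_x$ and intersecting to $\{x\}$), the same candidate sets $V_n=\bigcup_{x\in F} W^x_n$, and the same single use of point-finiteness to confine the witnesses at a point outside $F$ to a finite set. The only difference is cosmetic: you verify $\bigcap_n V_n\subseteq F$ directly via a pigeonhole on repeated witnesses, whereas the paper argues contrapositively that each $y\notin F$ is expelled from $V_n$ for all sufficiently large $n$.
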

\begin{proof} Let $\left\{\mathcal{N}_{x}:x\in X\right\}$ be a point-finite neighborhood assignment in $X$.
For every point $x\in F$ fix the sequence of open sets $\{U_n(x): n\in \omega\}$ such that 
$$U_0(x) \subseteq \mathcal{N}_{x}, U_{n+1}(x) \subseteq U_n(x)\,\, \text{and}\,\, \bigcap\{U_n(x): n\in\omega\} = \{x\}.$$
Define open sets $V_n = \bigcup_{x\in F} U_n(x)$ for every $n\in\omega$.
We claim that $\bigcap_{n\in\omega} V_n = F$. Indeed, let $y \notin F$ be any point. There are at most finitely many points $x_1, x_2, \dots, x_k$ in $F$ such that
$y \in \mathcal{N}_{x_i}$. Since $\bigcap_{n\in\omega} U_n(x_i) = \{x_i\}$ for each $i = 1, 2, \dots, k$, there is one index $n_0$ large enough such that 
$y \notin U_n(x_i)$ for every $n > n_0$. Finally, $y \notin V_n$ for every $n > n_0$.
\end{proof}

Since as we have mentioned before, every scattered Eberlein compact space admits a point-finite neighborhood assignment we immediately derive
\begin{corollary}\label{cor:Qspace} Let $F$ be any subspace of a scattered Eberlein compact space $X$ consisting of points which are $G_{\delta}$ in $X$.
Then $F$ is a $Q$-space.
\end{corollary}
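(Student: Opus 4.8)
The plan is to deduce this directly from Theorem \ref{Th:pfna}, the only additional input being the fact recorded just above the corollary that every scattered Eberlein compact space admits a point-finite neighborhood assignment. Hence Theorem \ref{Th:pfna} is available for our $X$. Recalling that $F$ being a $Q$-space means precisely that every subset of $F$ is a $G_\delta$-set in $F$, I would fix an arbitrary subset $E\subseteq F$ and aim to show that $E$ is a $G_\delta$-set in the subspace $F$.

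The key observation is that the hypothesis of Theorem \ref{Th:pfna} passes to $E$. Indeed, since $E\subseteq F$, every point of $E$ is a point of $F$ and is therefore $G_\delta$ in $X$. Thus $E$ is a subset of $X$ all of whose points are $G_\delta$ in $X$, and Theorem \ref{Th:pfna}, applied to $E$ in place of $F$, tells us that $E$ is a $G_\delta$-set in $X$. Writing $E=\bigcap_{n\in\omega}W_n$ with each $W_n$ open in $X$, and using $E\subseteq F$, we obtain $E=E\cap F=\bigcap_{n\in\omega}(W_n\cap F)$, which exhibits $E$ as a $G_\delta$-set in the subspace $F$.

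Since $E$ was an arbitrary subset of $F$, this shows that every subset of $F$ is $G_\delta$ in $F$, i.e.\ $F$ is a $Q$-space. There is essentially no obstacle to overcome: all the substance is already contained in Theorem \ref{Th:pfna}. The only point that requires attention is that the theorem must be invoked for \emph{each} subset $E$ of $F$ separately, which is legitimate precisely because the property of consisting of $G_\delta$-points of $X$ is inherited by every subset.
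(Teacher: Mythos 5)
Your proposal is correct and is exactly the paper's (implicit) argument: the authors derive the corollary ``immediately'' from Theorem~\ref{Th:pfna} together with the fact that scattered Eberlein compact spaces admit point-finite neighborhood assignments, and what you have written is precisely the spelled-out version --- apply the theorem to each subset $E\subseteq F$ (legitimate since the property of consisting of $G_\delta$-points of $X$ is inherited by subsets) to get $E$ a $G_\delta$-set in $X$, then intersect with $F$. No difference in route, only in level of detail.
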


Note that Theorem \ref{Th:pfna} generalizes \cite[Theorem 1]{KMM}.

\textbf{Acknowledgements.} 
The authors are grateful to the referee for careful reading of the paper and valuable 
suggestions and comments.

\end{document}